\newtheorem{thm}{Theorem}
\newtheorem{prop}{Proposition}
\newtheorem{lem}{Lemma}
\newtheorem{cor}{Corollary}
\newtheorem{ass}{Assumption}
\newtheorem{exam}{Example}
\begin{document}
\title{S-Lemma with Equality and Its Applications
\thanks{This research was supported
by Taiwan National Science Council under grant 102-2115-M-006-010,
by National Center for Theoretical Sciences (South), by National Natural Science Foundation of China under grant
11471325 and by Beijing Higher Education Young Elite Teacher Project 29201442.
}
}

\titlerunning{S-Lemma with Equality }        

\author{ Yong Xia  \and  Shu Wang \and Ruey-Lin Sheu}


\institute{
Y. Xia \at
              State Key Laboratory of Software Development
              Environment, LMIB of the Ministry of Education,
              School of
Mathematics and System Sciences, Beihang University, Beijing,
100191, P. R. China
              \email{dearyxia@gmail.com}           
\and
 S. Wang         \at
              School of
Mathematics and System Sciences, Beihang University, Beijing,
100191, P. R. China \email{wangshu.0130@163.com }
\and
R. L. Sheu \at
              Department of Mathematics, National Cheng Kung University, Taiwan
              \email{rsheu@mail.ncku.edu.tw}
 }

\date{Received: date / Accepted: date}

\maketitle

\begin{abstract}
Let $f(x)=x^TAx+2a^Tx+c$ and $h(x)=x^TBx+2b^Tx+d$ be two quadratic
functions having symmetric matrices $A$ and $B$. The S-lemma with
equality asks when the unsolvability of the system $f(x)<0, h(x)=0$
implies the existence of a real number $\mu$ such that $f(x) + \mu
h(x)\ge0, ~\forall x\in \mathbb{R}^n$. The problem is much harder
than the inequality version which asserts that, under Slater
condition, $f(x)<0, h(x)\le0$ is unsolvable if and only if $f(x) +
\mu h(x)\ge0, ~\forall x\in \mathbb{R}^n$ for some $\mu\ge0$. In
this paper, we show that the S-lemma with equality does not hold
only when the matrix $A$ has exactly one negative eigenvalue and
$h(x)$ is a non-constant linear function ($B=0, b\not=0$).
As an application, we can globally solve $\inf\{f(x): h(x)=0\}$
as well as the two-sided generalized trust region subproblem
$\inf\{f(x): l\le h(x)\le u\}$ without any condition. Moreover,
the convexity of the joint numerical range $\{(f(x), h_1(x),\ldots,
h_p(x)):x\in\Bbb R^n\}$ where $f$ is a (possibly non-convex)
quadratic function and $h_1(x),\ldots,h_p(x)$ are affine functions
can be characterized using the newly developed S-lemma with
equality.

\keywords{S-lemma \and Slater condition \and Quadratically
constrained quadratic program \and Generalized trust region
subproblem \and Joint numerical range \and Hidden convexity}
\subclass{90C20, 90C22, 90C26}
\end{abstract}

\section{Introduction}

Let $f(x)=x^TAx+2a^Tx+c$ and $h(x)=x^TBx+2b^Tx+d$ be two quadratic
functions having symmetric matrices $A$ and $B$. In 1971, Yakubovich
\cite{y71,y77} proved a fundamental result, which we call the {\it
classical S-lemma} in this paper. It asserts that, given any pair of
quadratic functions $(f,h)$, if $h(x)\le0$ satisfies Slater's
condition, namely, there is an $\overline{x}\in \mathbb{R}^n$ such
that $h(\overline{x}) < 0$, the following two statements are always
equivalent:
\begin{itemize}
 \item[](${\rm S_1}$)~~ ($\forall x\in\Bbb R^n$) $~h(x)\le 0~\Longrightarrow~ f(x)\ge 0.$ 
 \item[](${\rm S_2}$)~~   There exists a $\mu\ge0$ such that
$f(x) + \mu h(x)\ge0, ~\forall x\in \mathbb{R}^n.$ 
\end{itemize}
In the following, let us denote the equivalence by (${\rm
S_1}$)$\sim$(${\rm S_2}$) to mean that both statements are true or
false synchronously.

Notice that (${\rm S_2}$) trivially implies (${\rm S_1}$), but the
other way around from (${\rm S_1}$) to (${\rm S_2}$) is not so
obvious. Yakubovich's proof (see also \cite{PT}) had to rely on (i)
a homogenization scheme which transforms the nonhomogeneous system
of (${\rm S_1}$) and (${\rm S_2}$) into homogeneous ones; (ii) if
$f$ and $h$ are quadratic forms, the joint numerical range $\{(f(x),
h(x)): x\in\Bbb R^n\}$ is convex \cite{Dines}; and (iii) Slater's
condition for applying the separation theorem to provide the
existence of $\mu\ge0$.

The classical S-lemma is a powerful tool especially in control
theory and robust optimization. See recent surveys in \cite{D06,PT}.
It is a form of the celebrated Farkas lemma \cite{J00}. It can be
applied to solve and show the hidden convexity of the quadratic
programming with a single quadratic inequality constraint (QP1QC)
under Slater's condition. The connection is easily illustrated by
\begin{align}
v({\rm QP1QC})&=\inf_{x\in \Bbb R^n}\left\{f(x): h(x)\le0\right\}\nonumber\\
&=\sup\limits_{\lambda\in\Bbb R}\left\{\lambda:\Big\{x\in \Bbb
R^n|f(x)<\lambda,~ h(x)\le0\Big\}=\emptyset\right\}\nonumber\\
&=\sup\limits_{\lambda\in\Bbb R}\left\{\lambda: (\exists\mu\ge0)~f(x)-\lambda+\mu h(x)\ge0,\forall x\in \Bbb R^n\right\}\label{qp1qc-1}\\
&=\sup_{\lambda\in\Bbb R,\mu\ge0}
\left\{\lambda:\left[\begin{array}{cc}
A+\mu B& a+\mu b\\
a^T+\mu b^T& c+\mu d -\lambda
\end{array}\right]
\succeq0\right\}\label{LMI}
\end{align}
where $v(\cdot)$ is the optimal value of problem $(\cdot)$ and the
notation $X\succeq0$ implies that $X$ is positive semidefinite.
Notice that the key step (\ref{qp1qc-1}) is due to (${\rm
S_1}$)$\sim$(${\rm S_2}$), and the SDP result in (\ref{LMI}) shows
the hidden convexity of (QP1QC).

The classical S-lemma has several forms of generalization. Jeyakuma
et al. in \cite{J09b} extended $\Bbb R^n$ in (${\rm S_1}$) and
(${\rm S_2}$) to any linear manifold. The result was applied to
characterize the global optimality of (QP1QC) equipped with
additional linear equality constraints. When three quadratic
functions are considered, under the assumption that there exists a
positive definite matrix pencil of the three quadratic functions,
Jeyakuma et al. in \cite{J09b} also gave a type of alternative
theorem involving only strict inequalities. Generalizations of the
classical S-lemma having two or more different $h$'s are referred to
as {\it S-procedure}. See \cite{D06,PT} for a survey. Particularly,
Fradkov and Yakubovich \cite{Fr} proved that the strong duality
holds for nonconvex quadratic optimization with two quadratic
constraints in {\it complex} variables \cite{BE}. Jeyakumar et al.
in \cite{J09a} proved that, in the absence of Slater's condition but
assuming that $\{x: h(x) \le0\}\not=\emptyset$, (${\rm
S_1}$)$\sim$(${\rm S_2}$) cannot be true for all $f$'s. However,
$(f,g)$ still satisfies a weaker equivalence (${\rm
S_1}$)$\sim$(${\rm S^r_2}$), the so-called {\it regularized S-lemma}
below:
\begin{itemize}
\item[](${\rm S_1}$)~~ ($\forall x\in\Bbb R^n$) $~h(x)\le 0~\Longrightarrow~ f(x)\ge 0.$ 
\item[](${\rm S^r_2}$)~~ ($\forall \epsilon>0$) ($\exists \lambda_{\epsilon} \ge 0$)
($\forall x\in\Bbb R^n$) $~f(x)+ \lambda_{\epsilon}h(x)+
\epsilon(x^Tx + 1) \ge 0.$
\end{itemize}
It is weaker because (${\rm S_2}$) implies (${\rm S^r_2}$) by
letting $\lambda_{\epsilon}=\mu\ge0,~\forall \epsilon>0$.
%

This paper studies the {\it S-lemma with equality}. In the formality,
it replaces $h\le0$ in (${\rm S_1}$) by $h=0$ and $\mu\ge0$ in
(${\rm S_2}$) by $\mu\in \mathbb{R}$. It asks, for what pairs of
$(f,h)$ the following two statements are equivalent (i.e. (${\rm
E_1}$)$\sim$(${\rm E_2}$)):
\begin{itemize}
\item[](${\rm E_1}$)~~ ($\forall x\in\Bbb R^n$) $~h(x)= 0~\Longrightarrow~ f(x)\ge 0.$ 
\item[](${\rm E_2}$)~~ There exists a $\mu\in \mathbb{R}$ such that
$f(x) + \mu h(x)\ge0, ~\forall x\in \mathbb{R}^n.$ 
\end{itemize}
With the replacement, (${\rm S_1}$) is relaxed to (${\rm E_1}$)
whereas (${\rm S_2}$) is relaxed to (${\rm E_2}$). Though it is easy
to see that (${\rm E_1}$)$\sim$(${\rm E_2}$) does not always hold,
yet it is by no means trivial to characterize conditions that
$(f,h)$ should satisfy to make (${\rm E_1}$)$\sim$(${\rm E_2}$)
correct. This is going to be the main theme of this paper.

To begin with the discussion, we always assume that $ \{x:
h(x)=0\}\neq \emptyset.$ In literature, the first variant of the
S-lemma with equality was proposed by Finsler \cite{F} in 1937 for
the homogeneous system. Different proofs of Finsler's Theorem can be
found in \cite{H75,M93}. It states that the statement
\begin{equation}
(\forall x\in\Bbb R^n,~x\not=0)~~x^TBx = 0 ~\Longrightarrow~ x^TAx> 0
 \label{Finsler}
\end{equation}
is true if and only if there exists a $\mu\in \Bbb R$ such that $A +
\mu B\succ0$. However, from the simple example that
$f(x_1,x_2)=x_1x_2,~h(x_1,x_2)=-x_1^2$, we see that
$h(x_1,x_2)=-x_1^2=0$ implies that $f(x_1,x_2)=0$ so (${\rm E_1}$)
is true but (\ref{Finsler}) is not valid. Finsler's Theorem then
asserts that there is no $\mu\in \Bbb R$ such that $A + \mu
B\succ0$, but, indeed, there is no $\mu\in \Bbb R$ such that $f(x) +
\mu h(x)=x_1x_2-\mu x^2_1\ge0, ~\forall x\in \mathbb{R}^n.$ Thus
(${\rm E_2}$) is false and (${\rm E_1}$)$\not\sim$(${\rm E_2}$). The
example shows that the equivalence of (${\rm E_1}$) and (${\rm
E_2}$) is in general not true, even just for a simple homogeneous
system by a slight generalization from Finsler's Theorem. We notice
that in this example $h(x_1,x_2)=-x_1^2\le0$ satisfies Slater's
condition so that (${\rm S_1}$)$\sim$(${\rm S_2}$). It does not
satisfy the following ``two-side'' Slater's condition though.
\begin{ass}\label{as1}
$h(x)$ takes both positive and negative values, i.e., there are
$x',x''\in \Bbb R^n$ such that $h(x')<0<h(x'')$.
\end{ass}

Assumption \ref{as1} is obviously stricter than the usual Slater's
condition. However, even when it is imposed, (${\rm
E_1}$)$\sim$(${\rm E_2}$) may still be invalid. For example, let
$f(x_1,x_2)=x_1^2-x_2^2,~h(x_1,x_2)=x_2$ where $h(x_1,x_2)=x_2$
satisfies Assumption \ref{as1}. We can check that $h(x_1,x_2)=x_2=0$
implies that $f(x_1,0)=x_1^2\ge0$, but there is still no $\mu\in
\Bbb R$ such that $f+\mu h=x_1^2-x_2^2+\mu x_2\ge0,~\forall
x_1,x_2\in \mathbb{R}$. So (${\rm E_1}$)$\not\sim$(${\rm E_2}$).
However, since $h$ satisfies Slater's condition, there must be
(${\rm S_1}$)$\sim$(${\rm S_2}$). A simple verification shows that
``$h(x_1,x_2)=x_2\le0 \nRightarrow f(x_1,x_2)=x_1^2-x_2^2\ge0$'' so
that both (${\rm S_1}$) and (${\rm S_2}$) are false in this case.

There were several attempts trying to establish some affirmative
results for (${\rm E_1}$)$\sim$(${\rm E_2}$), but they all came with
an incomplete sufficient condition. Analogous to the role of
Slater's condition in (${\rm S_1}$)$\sim$(${\rm S_2}$), all
sufficient conditions for (${\rm E_1}$)$\sim$(${\rm E_2}$) are
subject to Assumption \ref{as1}. In literature, those sufficient
conditions are \vskip0.1cm

\noindent{\bf S-Condition 1}(\cite{PT}): $h(x)$ is strictly concave
(or strictly convex). \vskip0.1cm

\noindent{\bf S-Condition 2}(\cite{BE}, Thm. A.2): There is an
$\eta\in \Bbb R$ such that $ A \succeq\eta B.$ \vskip0.1cm

\noindent{\bf S-Condition 3}(\cite{TT}, Corollary 6): $h(x)$ is
homogeneous. \vskip0.1cm

\noindent{\bf S-Condition 4}(\cite{Bong}): $h(0)=0$ and there exists
$\zeta\in X=\{x\in \Bbb R^n: h(x)=0\}$ such that
\begin{equation}
(\forall x\in \mathbb{R}^n)~~x^TBx=0 ~\Longrightarrow ~(B\zeta+b)^Tx=0.
\label{as55}
\end{equation}
We defer the discussion about the relations among S-Conditions 1 - 4
to Appendix for readers who are interested, but consider the
following example, which shows that none of S-Conditions 1 - 4 can
become necessary.

Let $f(x_1,x_2)=-x_1^2-x_2^2,~h(x_1,x_2)=x_2$. We check that
$h(x_1,x_2)=x_2$ satisfies Assumption \ref{as1}. For (${\rm E_1}$),
$h(x_1,x_2)=x_2=0$ does not imply that $f(x_1,0)=-x_1^2\ge0,~\forall
x_1\in\mathbb{R}$. For (${\rm E_2}$), there is no $\mu\in \Bbb R$ such
that $f+\mu h=-x_1^2-x_2^2+\mu x_2\ge0,~\forall x_1,x_2\in
\mathbb{R}$. Since both (${\rm E_1}$) and (${\rm E_2}$) are false,
(${\rm E_1}$)$\sim$(${\rm E_2}$) in this example. However, this
affirmative example cannot be characterized by either S-Condition
above. We can verify that $h$ is not strictly convex (concave);
since $B=0,~A\prec0$, there is no $\eta\in \Bbb R$ such that $ A
\succeq\eta B; $ $h=0$ is a hyperplane so it is not homogeneous;
finally $h(0)=0$, but due to $B=0,~b\not=0$, S-Condition 4 does not
hold.

From the above discussion, we learn that (${\rm E_1}$)$\sim$(${\rm
E_2}$) is a much harder problem than the classical S-lemma (${\rm
S_1}$)$\sim$(${\rm S_2}$). No attempt has been successfully made so
far. Our approach is to beak down the problem into several cases,
each of which encompasses a unique algebraic and/or geometrical
feature of $(f,h)$ for easy analysis. We found that, when $h(x)$
takes both positive and negative values, the statement (${\rm E_1}$)
(i.e. $\inf_{h(x)=0}f(x)\ge0$ holds) {\it nearly implies that} there
is a scalar $\mu$ to adjust the size and/or the direction of $h$
such that the linear combination $f(x)+\mu h(x)$ becomes convex with
an attainable minimum (on the entire $\mathbb{R}^n$, not just on
$h(x)=0$). The only exception is when $h(x)=0$ is a ``flat'' $n-1$
dimensional hyperplane on which $f$ is convex while on the remaining
dimension (complement to $h(x)=0$) $f$ becomes concave. If described
in the algebraic way, under Assumption \ref{as1}, if $B\not=0$,
there must be (${\rm E_1}$)$\sim$(${\rm E_2}$). The only type of
examples that can have (${\rm E_1}$)$\not\sim$(${\rm E_2}$) is when,
and only when $B=0,~b\not=0$, the matrix $A$ has exactly one
negative eigenvalue, and the matrix in (\ref{m}) is positive
semi-definite. The result explains why
$f(x_1,x_2)=-x_1^2-x_2^2,~h(x_1,x_2)=x_2$ gets an affirmative result
(the matrix $A$ has two negative eigenvalues), while
$f(x_1,x_2)=x_1^2-x_2^2,~h(x_1,x_2)=x_2$ gets a negative assertion
($B=0$, $b^T=(0,1/2)$, $A$ has exactly one negative eigenvalue, and
the matrix in (\ref{m}) is 
${\rm Diag}(1,0)\succeq0$). 
Theorem
\ref{thm:m} in Section 3 gives the complete statement as well as the
proof for the S-lemma with equality to hold under Assumption
\ref{as1}, whereas the characterization of the theorem when
Assumption \ref{as1} fails will be treated in Section 2.

Now we turn to the applications of the S-lemma with equality. The
importance of the S-lemma with equality was not understood by us
until we eventually realized from \cite{Bong} that it is the key to
solve the following long-standing interval bounded generalized trust
region subproblem (studied in \cite{Pong} by Pong and Wolkowicz and
also by many other researchers)
\begin{eqnarray}
({\rm GTRS})~~~&\inf& f(x)\nonumber \\
&{\rm s.t.}& l\le h(x)\le u. \label{interval-bound}
\end{eqnarray}
If $f(x)$ is convex and the optimal solution to the unconstraint
problem $\min\limits_{x\in \mathbb{R}^n} f(x)$ happens to be
feasible to (\ref{interval-bound}) (by checking whether $\nabla
f(x)=0,~l\le h(x)\le u$ has a solution), then the optimal value
$\displaystyle v({\rm GTRS})=\min\limits_{x\in \mathbb{R}^n} f(x)$.
Otherwise, the optimal solution is located at one of the two
boundaries:
\[
v({\rm GTRS})=\min\left\{\inf_{h(x)=l} f(x),~\inf_{h(x)=u} f(x) \right\}.
\]
It reduces (GTRS) to
\begin{eqnarray*}
({\rm QP1EQC})~~~&\inf& f(x)\\ 
&{\rm s.t.}& h(x)=0.
\end{eqnarray*}
With the same idea for solving (QP1QC) by applying the classical
S-lemma in (\ref{qp1qc-1})-(\ref{LMI}), (QP1EQC) can be similarly
proved to have the strong duality by the S-lemma with equality. We
also discuss when (QP1EQC) becomes unbounded below, and give a
necessary and sufficient condition to characterize when (QP1EQC) is
attainable. As a consequence, both (QP1EQC) and (GTRS) are
completely analyzed.

We remark that (QP1EQC) itself has many interesting applications,
including the double well potential optimization problem
\cite{Fa,Fa-2}, the time of arrival geolocation problem \cite{Hm}
and unbiased least squares optimization for system identification
\cite{Pa}. In particular, the double well potential model came from
numerical approximations to the generalized Ginzburg-Landau
functionals \cite{Jerrard}. It minimizes the following special type
of multi-variate polynomial of degree 4:
\[
\min_{x \in \Bbb R^n} \frac{1}{2}\left(\frac{1}{2} \|Qx-c\|^2-d \right)^2
+\frac{1}{2}x^TAx-a^Tx
\]
where $Q\not=0$ is an $m\times n$ matrix. We can reformulate it as
an example of (QP1EQC):
\begin{eqnarray*}
({\rm DWP})~~~&\min_{x \in \Bbb R^n,z\in \Bbb R}& \frac{1}{2}z^2+\frac{1}{2}x^TAx-a^Tx\\
&{\rm s.t.}& \frac{1}{2} \|Qx-c\|^2-d-z=0.
\end{eqnarray*}
In the constraint function, the variable $z$ does not have a second
order term, so it is not strictly concave (or strictly convex). When
$A$ has a negative eigenvalue whose eigenvector is not in the range
of $Q^TQ$, there is no $\eta\in \Bbb R$ such that $A\succeq\eta
Q^TQ.$ Moreover, (DWP) is in general non-homogeneous. Finally, the
model obviously fails an equivalent statement of S-Condition 4 in
(\ref{bB}), which we derive in Appendix. In other words, none of the
existing results leads a complete answer to (DWP).
%
%


Finally in Section 5, by applying the S-lemma with equality, we
obtain a {\it necessary and sufficient} description for the
convexity of the joint numerical range
$S=\{(f(x),h_1(x),\ldots,h_p(x)): x\in\Bbb R^n\}$ where
$h_1,\ldots,h_p$ are all affine functions. Dines in 1941
\cite{Dines} showed that the joint numerical range $M= \{(f(x),
h(x)):x\in \Bbb R^n\}$ is a convex subset in $\Bbb R^2$ when $f$ and
$h$ are quadratic forms. Dines' theorem later became a major
machinery for Yakubovich to prove the famous classical S-lemma in
1971. However, it has been shown by Polyak in 1998 \cite{Poly} that
Dines' result is in general not true even when one of $f$ and $h$ is
affine. Beck \cite{BE07} has shown that, when $f$ is strictly convex
and $p\le n-1$, $S$ is closed and convex, but $S$ is non-convex when
$p=n$ and $h_1,h_2,\ldots,h_n$ have linearly independent normals.
Our necessary and sufficient conditions for $S$ being convex cover
Polyak's counterexample and Beck's result as special cases.

Below we highlight a list of main contributions in the paper.
\begin{itemize}
\item When Assumption \ref{as1} fails, Theorem \ref{new1} characterizes the necessary
and sufficient conditions under which ${\rm (E_1)}$ $\sim$ ${\rm
(E_2)}$. (Sect. 2)
\item When Assumption \ref{as1} fails, Theorem \ref{new1:2} gives a ``regularized
S-lemma with equality'' ${\rm (E_1)}\sim{\rm (E^r_2)}$. The
classical regularized S-lemma (${\rm S_1}$)$\sim$(${\rm S^r_2}$)
by Jeyakumar et al. in \cite{J09a} is shown to be a direct
consequence of ${\rm (E_1)}\sim{\rm (E^r_2)}$. (Sect. 2)
\item When Assumption \ref{as1} holds, Theorem \ref{thm:m} characterizes the necessary
and sufficient conditions under which ${\rm (E_1)}$ $\sim$ ${\rm (E_2)}$. (Sect. 3)
\item As an application of Theorem \ref{thm:m}, under Slater's condition,
the classical S-lemma (${\rm S_1}$)$\sim$(${\rm S_2}$) is shown
to be a direct consequence of ${\rm (E_1)}$ $\sim$ ${\rm (E_2)}$. (Sect. 3)
\item As an application of Theorem \ref{thm:m}, problems (QP1EQC) as well as (GTRS) are
completely solved without any condition. (Sect. 4)
\item As an application of Theorem \ref{thm:m}, Beck's result \cite{BE07}
about the convexity of 
$S=\{(f(x),h_1(x),\ldots,h_p(x)): x\in\Bbb R^n\}$ with $f$ being strictly
convex and $h_i's$ being affine can be now generalized to any 
quadratic function $f$ in Theorem \ref{thm:cv}. (Sect. 5)
\item The most general version of the ``regularized S-lemma with equality'' is given in
Corollary 1 without any condition. (Sect. 3)
\end{itemize}

%
%
%

Throughout the paper, we always assume that $\{x\in\Bbb R^n:
h(x)=0\}\not=\emptyset$. Notation $A\succeq (\preceq) B$ denotes
that the matrix $A-B$ is positive (negative) semidefinite. $A\succ
(\prec) B$ means that the matrix $A-B$ is positive (negative)
definite. $S_+^n$ represents the space of all $n\times n$ positive
semidefinite symmetric matrices. $A\bullet B={\rm
Tr}(AB^T)=\sum_{i,j=1}^na_{ij}b_{ij}$ stands for the standard inner
product of two symmetric matrices $A, B$. The null and range space
of $B$ is denoted by $\mathcal{N}(B)$ and $\mathcal{R}(B)$,
respectively; and $B^+$ is the Moore-Penrose generalized inverse of
$B$. Denote by $I_n$ the identity matrix of dimension $n$; and by
${\rm Diag}(a)$ the diagonal matrix with $a$ being its diagonal
vector. The notation $v(\cdot)$ denotes the optimal value of a
particularly mentioned optimization problem $(\cdot)$.

%
%
%
%
%


\section{The S-lemma with equality when Assumption \ref{as1} fails }

We observe that Assumption \ref{as1} is violated and $\{x\in\Bbb
R^n: h(x)=0\}\not=\emptyset$ if and only if
\[
\min_x h(x)= 0~ {\rm or}~ \max_x h(x)=0.
\]
Namely, $h$ is convex(concave) and the set $\{x:\,h(x)=0\}$ consists
of all the minimizers (maximizers) of $h(x)$ such that
\begin{equation}
\{x:  h(x)=0\}=\{x: \min_x(\max_x)h(x)=0\}=\{-B^+b+Zy: y\in\Bbb R^m\}\label{h0}
\end{equation}
where $Z\in\Bbb R^{n\times m}$ is a matrix basis of
$\mathcal{N}(B)$, and
\begin{eqnarray*}
\min_x(\max_x)h(x)&=&(-B^+b+Zy)^TB(-B^+b+Zy)+2b^T(-B^+b+Zy)+d\\
&=& -b^TB^+b+d=0.
\end{eqnarray*}
We then investigate (${\rm E_1}$)$\sim$(${\rm E_2}$) under the
following property:
\begin{prop}\label{prop1}
Assumption \ref{as1} is violated if and only if
\begin{equation}
 B\succeq  (\preceq)~ 0,~   b\in \mathcal{R}(B)~{\rm and}~-b^TB^+b+d=0.\label{12a}
\end{equation}
\end{prop}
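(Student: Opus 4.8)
The plan is to prove the equivalence in Proposition~\ref{prop1} by establishing that, under the standing assumption $\{x: h(x)=0\}\neq\emptyset$, the failure of Assumption~\ref{as1} forces $h$ to be a convex (or concave) quadratic whose zero set consists precisely of its global minimizers (or maximizers). I would prove the two directions separately, and then translate the geometric condition ``$\min_x h(x)=0$ or $\max_x h(x)=0$'' into the purely algebraic statement in~(\ref{12a}).

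Let me sketch what I'm proving. First I would observe...

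that, because $\{x:h(x)=0\}\neq\emptyset$ is assumed throughout, the failure of Assumption~\ref{as1} means precisely that $h$ does not change sign: either $h(x)\ge 0$ for all $x$, or $h(x)\le 0$ for all $x$; equivalently $\min_x h(x)=0$ or $\max_x h(x)=0$. Since replacing $h$ by $-h$ carries $(B,b,d)$ to $(-B,-b,-d)$ — under which $B\succeq 0$ turns into $B\preceq 0$, while both $b\in\mathcal R(B)$ and $-b^TB^+b+d=0$ persist (using $(-B)^+=-B^+$) — and this substitution interchanges $\min$ and $\max$, it suffices to treat the convex case and establish
\[ \min_x h(x)=0 \iff B\succeq 0,\ b\in\mathcal R(B),\ -b^TB^+b+d=0. \]

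For the forward implication, $\min_x h(x)=0$ asserts that $h$ is bounded below with its infimum attained and equal to $0$. Boundedness below forces $B\succeq 0$, since an eigenvector $v$ of a negative eigenvalue would give $h(tv)\to-\infty$. Decomposing $b=b_{\mathcal R}+b_{\mathcal N}$ along $\mathcal R(B)\oplus\mathcal N(B)$, attainment forces $b_{\mathcal N}=0$: otherwise, along the null direction the map $t\mapsto h(x_0-tb_{\mathcal N})$ is affine with slope $-2\|b_{\mathcal N}\|^2<0$, again contradicting boundedness below. Thus $b\in\mathcal R(B)$, and evaluating $h$ at the minimizer $x^\ast=-B^+b$ from~(\ref{h0}) gives minimum value $-b^TB^+b+d$, which must be $0$.

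For the reverse implication, the three conditions make $h$ a convex quadratic with finite minimum; the computation already displayed below~(\ref{h0}) gives $h(x^\ast)=-b^TB^+b+d=0$ at $x^\ast=-B^+b$, so $h(x)\ge 0$ everywhere with equality on a nonempty set, whence $h$ omits negative values and Assumption~\ref{as1} fails. I expect the single delicate point to be the characterization, for a convex quadratic, of when a finite infimum is actually attained — which is exactly what the condition $b\in\mathcal R(B)$ encodes; the rest is bookkeeping, and the concave case requires no separate argument thanks to the $h\mapsto-h$ symmetry established at the outset.
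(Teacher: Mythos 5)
Your proof is correct and follows essentially the same route as the paper: the paper likewise observes that, given $\{x:h(x)=0\}\neq\emptyset$, failure of Assumption~\ref{as1} is equivalent to $\min_x h(x)=0$ or $\max_x h(x)=0$, and then reads off $B\succeq(\preceq)\,0$, $b\in\mathcal{R}(B)$ and $-b^TB^+b+d=0$ via the minimizer $-B^+b$ exactly as you do. Your write-up merely makes explicit the details the paper leaves terse (the unboundedness arguments forcing $B\succeq 0$ and $b\in\mathcal{R}(B)$, and the $h\mapsto -h$ symmetry handling the concave case), all of which are sound.
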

We first discuss the special case that both $f$ and $h$ are
homogeneous (i.e., $a=c=b=d=0$), while the non-homogeneous case will
be proved using the homogeneous result.

Suppose $h$ is homogeneous. Then, (\ref{12a}) is reduced to
$B\succeq (\preceq)~ 0$ and (\ref{h0}) becomes $
h(x)=0~\Longleftrightarrow~x=Zy.$ Therefore,
\[
~~({\rm E}_1) \Longleftrightarrow f(Zy)\ge 0, ~\forall y.
\]
Suppose $f$ is also homogeneous. Then,
\begin{equation}
(homogeneous)~~({\rm E}_1) \Longleftrightarrow Z^TAZ\succeq 0.\label{e1}
\end{equation}
On the other hand, when $f$ and $h$ are quadratic forms,
 \begin{equation}
(homogeneous)~~({\rm E}_2) \Longleftrightarrow (\exists \mu\in \Bbb R)~A+\mu B \succeq 0.\label{e2}
\end{equation}
Therefore, suppose Assumption \ref{as1} is violated and both $f$ and
$h$ are quadratic forms, homogeneous version of (${\rm
E_1}$)$\sim$(${\rm E_2}$) is the same as
({\ref{e1}})$\sim$({\ref{e2}}).

Since ({\ref{e2}}) trivially implies ({\ref{e1}}), it is sufficient
to show that ({\ref{e1}}) implies ({\ref{e2}}). When $B\succ 0$,
$A+\mu B \succ 0$ for any sufficiently large $\mu$, so ({\ref{e2}})
is trivially true. We let $B\succeq 0$ but not definite, i.e.,
$Z\neq 0$. Then there are two possibilities:
\begin{itemize}
\item[(a)] Suppose $Z^TAZ\succ 0$. For $x^TBx = 0$, we have $x=Zy$ for some $y\neq0$
and $x^TAx=y^TZ^TAZy>0$. In other words, the system
    \[
    x^TAx \le 0,~x^TBx = 0,~x\neq 0,
    \]
has no solution. By Finsler's Theorem, ({\ref{e2}}) must be
true.
\item[(b)] Suppose $Z^TAZ\succeq 0$ but not definite.
Anstreicher and Wright  \cite{AW} proved in 2000 that
({\ref{e1}}) is equivalent to ({\ref{e2}})   if and only if
$\mathcal{N}(Z^TAZ)=\mathcal{N}(Z^TA^2Z)$.
\end{itemize}

In summary, if $f, h$ are homogeneous and either $h(x)\ge0$ or
$h(x)\le0$, the S-lemma with equality holds for one of the following
three situations: (i) $B\succ(\prec)~0$; (ii) $B\succeq(\preceq)~0$ and $Z^TAZ\succ
0$; and (iii) $B\succeq(\preceq)~0,~Z^TAZ\succeq 0$ and
$\mathcal{N}(Z^TAZ)=\mathcal{N}(Z^TA^2Z)$. Notice that, in
Introduction, we have seen (${\rm E_1}$)$\not\sim$(${\rm E_2}$) for
the example $f(x_1,x_2)=x_1x_2,~h(x_1,x_2)=-x_1^2$. The reason now is
clear that $B\preceq0,~Z^TAZ=0$,  $\mathcal{N}(Z^TAZ)=\Bbb R$, but
$\mathcal{N}(Z^TA^2Z)=\{0\}$.

For nonhomogeneous $f$ and $h$, we first have the following
proposition.

\begin{prop}\label{prop2}
Suppose Assumption \ref{as1} is violated and $f,~h$ are
nonhomogeneous. Then,
\begin{equation}\label{w}
({\rm E}_1) \Longleftrightarrow W=\widetilde{Z}^T \widetilde{A}\widetilde{Z}
=\left[\begin{matrix}Z^TAZ&Z^Ta-Z^TAB^+b\\
a^TZ-b^TB^{+}AZ&~b^TB^{+}AB^+b-2a^TB^+b+c \end{matrix}\right]\succeq 0,
\end{equation}
where $Z$ is a matrix basis of $\mathcal{N}(B)$, and
\begin{equation}
\widetilde{Z}=\left[\begin{matrix}Z&0\\0&1\end{matrix}\right], ~
\widetilde{A}=\left[\begin{matrix} A & a- AB^+b\\
a^T -b^TB^{+}A^T &~b^TB^{+}AB^+b-2a^TB^+b+c \end{matrix}\right].\label{A-tilde}
\end{equation}
\end{prop}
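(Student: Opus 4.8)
The plan is to convert the statement (${\rm E_1}$), which constrains $f$ on the affine feasible set $\{x:h(x)=0\}$, into a single positive-semidefiniteness condition. Since Assumption~\ref{as1} fails, Proposition~\ref{prop1} together with the parametrization (\ref{h0}) gives $\{x:h(x)=0\}=\{-B^+b+Zy:y\in\Bbb R^m\}$, where the columns of $Z$ form a basis of $\mathcal{N}(B)$; recall also that $b\in\mathcal{R}(B)$, whence $Z^Tb=0$ and $BB^+b=b$. Consequently (${\rm E_1}$) is equivalent to the global nonnegativity of the restriction
\[
\phi(y):=f(-B^+b+Zy)\ge 0,\qquad\forall\,y\in\Bbb R^m,
\]
so the whole task reduces to characterizing when this quadratic in $y$ is nonnegative everywhere.

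First I would expand $\phi(y)$ explicitly. Substituting $x=-B^+b+Zy$ into $f(x)=x^TAx+2a^Tx+c$ and using the symmetry of $A$ together with $(B^+)^T=B^+$ yields
\[
\phi(y)=y^T(Z^TAZ)y+2\bigl(a^TZ-b^TB^+AZ\bigr)y+\bigl(b^TB^+AB^+b-2a^TB^+b+c\bigr),
\]
whose quadratic, linear, and constant coefficients are precisely the three blocks displayed in $W$. A direct matrix multiplication then confirms the advertised factorization $W=\widetilde{Z}^T\widetilde{A}\widetilde{Z}$ with $\widetilde{Z},\widetilde{A}$ as in (\ref{A-tilde}); this part is routine bookkeeping.

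It remains to establish the elementary but decisive fact that a nonhomogeneous quadratic $\phi(y)=y^TPy+2p^Ty+r$ satisfies $\phi(y)\ge0$ for all $y$ if and only if its bordered coefficient matrix $\left[\begin{smallmatrix}P&p\\ p^T&r\end{smallmatrix}\right]\succeq0$. The ``if'' direction is immediate by evaluating the associated quadratic form at the vector $(y,1)$. The converse is the only step I expect to require care: I would homogenize, writing the form at $(y,t)$ as $t^2\phi(y/t)\ge0$ for $t\neq0$, and then recover the boundary slice $t=0$ through the limiting argument $\lim_{s\to\infty}\phi(sy)/s^2=y^TPy\ge0$, which forces $P=Z^TAZ\succeq0$; by continuity the bordered form is nonnegative on all of $\Bbb R^{m+1}$, i.e.\ the bordered matrix is positive semidefinite. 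Applying this to $\phi$ with $P=Z^TAZ$ and the coefficients above, and identifying the resulting bordered matrix with $W$, delivers (${\rm E_1}$)$\Longleftrightarrow W\succeq0$, as claimed.
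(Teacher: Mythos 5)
Your proposal is correct and follows essentially the same route as the paper: parametrize $\{x:h(x)=0\}=\{-B^+b+Zy\}$, expand $f$ into a quadratic in $y$ whose coefficients are the blocks of $W$, and pass from nonnegativity of the inhomogeneous quadratic to $W\succeq0$ via scaling of the homogenized form together with a limiting argument for the boundary slice. The paper handles that slice by letting $\gamma\to0$ in $[y^T\ \gamma]\,W\,[y^T\ \gamma]^T$, whereas you take $\lim_{s\to\infty}\phi(sy)/s^2$; these are interchangeable versions of the same continuity step, so there is nothing substantively different to flag.
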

\begin{proof}
Since $f$ is non-homogeneous and Assumption \ref{as1} fails, we have
\begin{eqnarray*}
\inf_{h(x)=0}~f(x)&=& f(-B^+b+Zy) \\
&=&(-B^+b+Zy)^TA(-B^+b+Zy)+2a^T(-B^+b+Zy)+c\\
 &=&\left[\begin{matrix}y\\
1\end{matrix}\right]^T W
\left[\begin{matrix}y\\
1\end{matrix}\right]\ge0,~~\forall y\in\Bbb R^m
\end{eqnarray*}
where $W$ is defined in (\ref{w}). Then, the matrix $W$ is positive
semi-definite since, for any $\gamma\neq 0$,
\[
\left[\begin{matrix}y\\
\gamma\end{matrix}\right]^T W
\left[\begin{matrix}y\\
\gamma\end{matrix}\right]=\gamma^2
\left[\begin{matrix}{y}/{\gamma}\\
1\end{matrix}\right]^T W
\left[\begin{matrix}{y}/{\gamma}\\
1\end{matrix}\right]\ge 0,~\forall y\in\Bbb R^m;
\]
and also for $\gamma=0$,
\[
\left[\begin{matrix}y\\
0\end{matrix}\right]^T W
\left[\begin{matrix}y\\
0\end{matrix}\right]=\lim_{\gamma\rightarrow 0}
\left[\begin{matrix}y\\
\gamma\end{matrix}\right]^T W
\left[\begin{matrix}y\\
\gamma\end{matrix}\right]\ge 0,~\forall y\in\Bbb R^m.
\]
\end{proof}

Notice that (\ref{w}) is the homogeneous representation for the
nonhomogeneous inequality $f(-B^+b+Zy)\ge0$ by lifting one more
dimension to $\widetilde{A}$. The next result is the main theorem of
this section.
%

\begin{thm}\label{new1}
Suppose Assumption \ref{as1} is violated and the same notations as
in Proposition \ref{prop2} are adopted. Then, ${\rm (E_1)}$ $\sim$
${\rm (E_2)}$ if and only if one of the following conditions is
satisfied:
\begin{itemize}
\item[{\rm(a)}]
$\widetilde{Z}^T\widetilde{A} \widetilde{Z}\succ 0$;
 \item[{\rm(b)}]$\widetilde{Z}^T\widetilde{A}\widetilde{Z}\succeq 0$ and
$\mathcal{N}(\widetilde{Z}^T\widetilde{A}\widetilde{Z})=
\mathcal{N}(\widetilde{Z}^T\widetilde{A}^2\widetilde{Z})$.
\end{itemize}
\end{thm}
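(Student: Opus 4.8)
The plan is to reduce the nonhomogeneous pair $(f,h)$ to a homogeneous pair by lifting to $\mathbb{R}^{n+1}$ and then invoke the homogeneous S-lemma with equality already established in this section, namely the trichotomy (i)--(iii) coming from Finsler's Theorem (\ref{Finsler}) and the Anstreicher--Wright criterion \cite{AW}. Set $\bar A=\left[\begin{matrix}A&a\\a^T&c\end{matrix}\right]$ and $\bar B=\left[\begin{matrix}B&b\\b^T&d\end{matrix}\right]$, so that $f(x)=\left[\begin{matrix}x\\1\end{matrix}\right]^T\bar A\left[\begin{matrix}x\\1\end{matrix}\right]$ and $h(x)=\left[\begin{matrix}x\\1\end{matrix}\right]^T\bar B\left[\begin{matrix}x\\1\end{matrix}\right]$. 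Since Assumption \ref{as1} fails, Proposition \ref{prop1} gives $B\succeq0$ (or $\preceq0$), $b\in\mathcal{R}(B)$ and $d=b^TB^+b$; writing $w=B^+b$ and $C=\left[\begin{matrix}I&w\\0&1\end{matrix}\right]$, the congruence $\bar B=C^T\left[\begin{matrix}B&0\\0&0\end{matrix}\right]C$ shows $\bar B\succeq0$ (resp. $\preceq0$) and that $\bar B$ is singular because ${\rm rank}(\bar B)={\rm rank}(B)\le n$. This singularity is exactly what allows the homogeneous machinery to apply.

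I would next check that the lifting preserves each of $(E_1)$ and $(E_2)$ separately. For $(E_2)$, a nonhomogeneous quadratic $f+\mu h$ is nonnegative on $\mathbb{R}^n$ if and only if its homogenization $\bar A+\mu\bar B\succeq0$, so $(E_2)\Leftrightarrow(\exists\mu\in\mathbb{R})~\bar A+\mu\bar B\succeq0$, which by (\ref{e2}) is precisely the homogeneous $(E_2)$ for $(\bar A,\bar B)$. For $(E_1)$, Proposition \ref{prop2} already gives $(E_1)\Leftrightarrow W\succeq0$, and the task is to identify $W$ with the homogeneous object. From the congruence above, a basis of $\mathcal{N}(\bar B)$ is $\widehat Z:=C^{-1}\widetilde Z=\left[\begin{matrix}Z&-B^+b\\0&1\end{matrix}\right]$, and one verifies $\widetilde A=C^{-T}\bar A C^{-1}$. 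A short computation then yields the key identity $\widetilde Z^T\widetilde A\widetilde Z=\widehat Z^T\bar A\widehat Z=W$, so (\ref{e1}) applied to $(\bar A,\bar B)$ gives that the homogeneous $(E_1)$ for $(\bar A,\bar B)$ is also equivalent to $W\succeq0$. Hence $(E_1)\sim(E_2)$ for $(f,h)$ is equivalent to the homogeneous $(E_1)\sim(E_2)$ for $(\bar A,\bar B)$.

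It then remains to read conditions (a)--(b) off the homogeneous trichotomy applied to $(\bar A,\bar B)$. Because $\bar B$ is singular, case (i) is vacuous, and the surviving cases are (ii) $\widehat Z^T\bar A\widehat Z\succ0$ and (iii) $\widehat Z^T\bar A\widehat Z\succeq0$ with $\mathcal{N}(\widehat Z^T\bar A\widehat Z)=\mathcal{N}(\widehat Z^T\bar A^2\widehat Z)$. Through $\widehat Z^T\bar A\widehat Z=W=\widetilde Z^T\widetilde A\widetilde Z$, case (ii) is exactly (a) and the semidefinite part of (iii) is exactly the first half of (b). The only mismatch is the squared term: (iii) uses $\widehat Z^T\bar A^2\widehat Z$ while (b) uses $\widetilde Z^T\widetilde A^2\widetilde Z$. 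I would close this with the elementary fact that $\mathcal{N}(P^TMP)=\mathcal{N}(P)$ for every $M\succ0$: taking $P=\bar A\widehat Z$ and $M=C^{-1}C^{-T}\succ0$ gives $\widetilde Z^T\widetilde A^2\widetilde Z=P^TMP$, hence $\mathcal{N}(\widetilde Z^T\widetilde A^2\widetilde Z)=\mathcal{N}(\bar A\widehat Z)=\mathcal{N}(\widehat Z^T\bar A^2\widehat Z)$, so the two null-space conditions coincide and (iii) becomes exactly (b).

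The hard part will be precisely this null-space bookkeeping. Because $\widetilde A=C^{-T}\bar A C^{-1}$ is a congruence rather than an orthogonal conjugation, $\widetilde A^2$ and $\bar A^2$ genuinely differ, and one must confirm that congruence does not disturb the Anstreicher--Wright condition; this is the role of the $\mathcal{N}(P^TMP)=\mathcal{N}(P)$ step, and getting the identities $\widetilde A=C^{-T}\bar A C^{-1}$ and $\widetilde Z^T\widetilde A\widetilde Z=W$ to fall out cleanly is the bulk of the computation. A secondary point to treat carefully is the trivial regime: if $W\not\succeq0$ then $(E_1)$ already fails and therefore so does $(E_2)$, so $(E_1)\sim(E_2)$ holds vacuously; the forward direction of the theorem is thus to be understood in the substantive regime where $(E_1)$ holds, i.e. $W\succeq0$, which is exactly the regime in which (a)--(b) live.
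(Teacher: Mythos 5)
Your proposal is correct and takes essentially the same route as the paper: homogenize to the $(n+1)\times(n+1)$ pair, identify $({\rm E_1})$ with $W=\widetilde{Z}^T\widetilde{A}\widetilde{Z}\succeq 0$ via Proposition \ref{prop2}, and invoke the homogeneous Finsler/Anstreicher--Wright trichotomy through the very same congruence $\widetilde{A}=C^{-T}\bar{A}C^{-1}$, $\widetilde{B}=C^{-T}\bar{B}C^{-1}={\rm Diag}(B,0)$ that the paper uses. The only difference is where the congruence is applied --- the paper transforms first and applies the homogeneous lemma directly to $(\widetilde{A},\widetilde{B})$, whereas you apply it to $(\bar{A},\bar{B})$ and then verify, correctly, via $\mathcal{N}(P^TMP)=\mathcal{N}(P)$ for $M\succ 0$, that the Anstreicher--Wright null-space condition is congruence-invariant; this extra invariance step is sound, and your explicit handling of the vacuous regime $W\not\succeq 0$ is a fair reading of the equivalence that the paper leaves implicit.
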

\begin{proof} Since Assumption \ref{as1} fails, by (\ref{12a}), we have $d=b^TB^+b$.
Then, 
\begin{eqnarray*}
(\exists \mu\in \Bbb R)~f(x)+\mu h(x)\ge0,~\forall x\in \Bbb R^n
\Longleftrightarrow \left[\begin{matrix}A&a\\a^T&c \end{matrix}\right]
+\mu\left[\begin{matrix}B&b\\b^T&b^TB^+b \end{matrix}\right]
\succeq 0.
\end{eqnarray*}
Using the invertible matrix $\left[\begin{matrix}I&0\\-b^TB^+&1
\end{matrix}\right]$ to homogenize $h$, we obtain
\begin{eqnarray*}
{\rm (E_2)}&\Longleftrightarrow&(\exists \mu\in \Bbb R)~
\left[\begin{matrix}A&a\\a^T&c \end{matrix}\right]
+\mu\left[\begin{matrix}B&b\\b^T&~b^TB^+b \end{matrix}\right]
\succeq 0\nonumber\\
&\Longleftrightarrow&(\exists \mu\in \Bbb R)~ \left[\begin{matrix}I&0\\-b^TB^+&1 \end{matrix}\right]
\left[\begin{matrix}A&a\\a^T&c \end{matrix}\right] \left[\begin{matrix}I&-B^+b\\0&1
\end{matrix}\right]+\mu\left[\begin{matrix}B&0\\0&0\end{matrix}\right]
\succeq 0 \nonumber\\
&\Longleftrightarrow& (\exists \mu\in \Bbb R)~\widetilde{A}
+\mu\left[\begin{matrix}B&0\\0&0\end{matrix}\right]
\succeq 0
\end{eqnarray*}
where $\widetilde{A}$ was defined in (\ref{A-tilde}). Let
$\widetilde{B}=\left[\begin{matrix}B&0\\0&0\end{matrix}\right]\succeq0.$
Notice that it is not definite and
$\widetilde{Z}=\left[\begin{matrix}Z&0\\0&1\end{matrix}\right]$ the
basis matrix for the null space of $\widetilde{B}$. By applying the
homogeneous version (${\ref{e1}}$)$\sim$(${\ref{e2}}$) to
$\widetilde{A}$ and $\widetilde{B}$, the conclusion of the theorem
follows immediately.
\end{proof}

Recall that, the classical S-lemma ${\rm (S_1)}$ $\sim$ ${\rm
(S_2)}$ relies on Slater's condition. When the condition is absent,
the regularized S-lemma ${\rm (S_1)}$ $\sim$ ${\rm (S^r_2)}$ is a
substitute for ${\rm (S_1)}$ $\sim$ ${\rm (S_2)}$ (\cite{J09a}).
Following the similar idea and as a direct consequence of Theorem
\ref{new1}, we can also formulate the following regularized version
of S-lemma with equality in the absence of Assumption \ref{as1}:
 \begin{thm}\label{new1:2}
Suppose Assumption \ref{as1} is violated. Then, the following two
statements are equivalent:
\begin{itemize}
\item[]${\rm (E_1)}$~~$(\forall x\in\Bbb R^n)$ $~~h(x)=0 ~\Longrightarrow~ f (x) \ge 0.$
\item[]${\rm (E^r_2)}$~~ $(\forall \epsilon>0) (\exists \lambda_{\epsilon})
(\forall x\in\Bbb R^n) ~f(x)+ \lambda_{\epsilon}h(x)+ \epsilon(x^Tx + 1) \ge 0$.
\end{itemize}
\end{thm}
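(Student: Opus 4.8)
The plan is to recognize the inner statement of ${\rm (E^r_2)}$, for each fixed $\epsilon>0$, as nothing but ${\rm (E_2)}$ for the perturbed pair $(f_\epsilon,h)$ where $f_\epsilon(x)=f(x)+\epsilon(x^Tx+1)=x^T(A+\epsilon I)x+2a^Tx+(c+\epsilon)$; indeed, writing $\lambda_\epsilon=\mu$, the requirement $f(x)+\lambda_\epsilon h(x)+\epsilon(x^Tx+1)\ge0$ is exactly $f_\epsilon(x)+\mu h(x)\ge0$. The easy direction ${\rm (E^r_2)}\Rightarrow{\rm (E_1)}$ is a limiting argument: for any $x$ with $h(x)=0$, the inequality reduces to $f(x)\ge-\epsilon(x^Tx+1)$, and letting $\epsilon\downarrow0$ gives $f(x)\ge0$. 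So the whole content is the forward direction ${\rm (E_1)}\Rightarrow{\rm (E^r_2)}$, which I would establish by showing that $(f_\epsilon,h)$ satisfies ${\rm (E_2)}$ for every $\epsilon>0$.

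For the hard direction, first note that $h$ is unchanged, so Assumption \ref{as1} still fails for $(f_\epsilon,h)$ and Proposition \ref{prop2} and Theorem \ref{new1} apply verbatim with $A,a,c$ replaced by $A+\epsilon I,a,c+\epsilon$. Denote by $\widetilde{A}_\epsilon$ the lifted matrix (\ref{A-tilde}) built from $f_\epsilon$ and set $W_\epsilon=\widetilde{Z}^T\widetilde{A}_\epsilon\widetilde{Z}$. Since ${\rm (E_1)}$ gives $f\ge0$ on $\{h=0\}$, we get $f_\epsilon\ge\epsilon(x^Tx+1)>0$ there, so ${\rm (E_1)}$ holds \emph{strictly} for $f_\epsilon$; by the computation in the proof of Proposition \ref{prop2} this means
\[
\begin{bmatrix}y\\1\end{bmatrix}^T W_\epsilon\begin{bmatrix}y\\1\end{bmatrix}=f_\epsilon(-B^+b+Zy)>0,\qquad\forall\,y\in\mathbb{R}^m.
\]
My aim is then to upgrade this to $W_\epsilon\succ0$, i.e. condition (a) of Theorem \ref{new1}, which by that theorem forces ${\rm (E_1)}\sim{\rm (E_2)}$ for $(f_\epsilon,h)$; combined with the (strict) truth of ${\rm (E_1)}$ for $f_\epsilon$ this yields ${\rm (E_2)}$ for $f_\epsilon$, exactly the inner clause of ${\rm (E^r_2)}$ for that $\epsilon$.

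The main step, and the only place requiring care, is proving $W_\epsilon\succ0$; the crux is the boundary case where the last coordinate of the test vector vanishes. For $\begin{bmatrix}y\\ \gamma\end{bmatrix}$ with $\gamma\ne0$, homogenizing the displayed inequality gives $\gamma^2 f_\epsilon(-B^+b+Z(y/\gamma))>0$, so positivity is immediate. For $\gamma=0$, $y\ne0$, the quadratic form reduces to the top-left block $y^TZ^T(A+\epsilon I)Zy=y^T(Z^TAZ)y+\epsilon\|Zy\|^2$. Here I would invoke that ${\rm (E_1)}$ for the \emph{original} $f$ already gives $W\succeq0$ (Proposition \ref{prop2}, (\ref{w})), hence its principal submatrix $Z^TAZ\succeq0$; since $Z$ is a basis matrix of $\mathcal{N}(B)$ it has full column rank, so $\|Zy\|^2>0$ for $y\ne0$ and the form is strictly positive. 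Thus $W_\epsilon$ is positive on all nonzero vectors, i.e. $W_\epsilon\succ0$, completing the argument. The expected obstacle is precisely this $\gamma=0$ degeneracy — it is why the perturbation $\epsilon\|x\|^2$ is indispensable, as it turns the merely semidefinite $Z^TAZ$ into a definite block and thereby converts ${\rm (E_1)}$ into the strong condition (a) that the unperturbed problem may fail.
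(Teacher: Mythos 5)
Your proposal is correct and takes essentially the same route as the paper: both reduce the inner clause of ${\rm (E^r_2)}$ to ${\rm (E_2)}$ for the perturbed pair $(f_\epsilon,h)$, use Proposition \ref{prop2} (applied to the original $f$) to get $W\succeq 0$, and then verify condition (a) of Theorem \ref{new1} for $f_\epsilon$. The only cosmetic difference is in checking $W_\epsilon\succ 0$: the paper does it by the explicit identity $W_\epsilon = W + \epsilon\left[\begin{smallmatrix} I & 0\\ 0 & \|B^+b\|^2+1\end{smallmatrix}\right]$ (using $Z^TZ=I$ and $Z^TB^+=0$), whereas you test the quadratic form on vectors $(y,\gamma)$ with the cases $\gamma\neq 0$ and $\gamma=0$ — the same fact verified pointwise.
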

\begin{proof}
Since ${\rm (E^r_2)}\Longrightarrow{\rm (E_1)}$ is trivial by
letting $\epsilon\rightarrow 0$, it is sufficient to prove the
converse. To this end, for all $\epsilon>0$, consider
$f_{\epsilon}(x)=f(x)+\epsilon(x^Tx + 1)=x^T(A+\epsilon
I)x+2a^Tx+(c+\epsilon)$ and $h(x)$. By ${\rm (E_1)}$, we first have
\begin{equation}
(\forall \epsilon>0) (\forall x\in\Bbb R^n)~~h(x)=0 ~\Longrightarrow~ f(x)\ge0
~\Longrightarrow~~f_\epsilon (x)\ge 0.\label{f_epsilon}
\end{equation}
Secondly, from Proposition \ref{prop2}, (E$_1$) also implies that
$W\succeq0$ in (\ref{w}). Then, due to $Z^TZ=I$ and $Z^TB^+=0$,
there is
\begin{eqnarray*}
&&\left[\begin{matrix}Z^T\left(A+\epsilon I\right)Z&Z^Ta-Z^T\left(A+\epsilon I\right)B^+b\\
a^TZ-b^TB^{+}\left(A+\epsilon I\right)Z&\ b^TB^{+}(A+\epsilon I)B^+b-2a^TB^+b+(c+\epsilon)
\end{matrix}\right] \\
&=&
\left[\begin{matrix}Z^TAZ&Z^Ta-Z^T AB^+b\\
a^TZ-b^TB^{+}AZ&\ b^TB^{+}AB^+b-2a^TB^+b+c\end{matrix}\right]+\epsilon
\left[\begin{matrix}I&~0\\
0&~\|B^+b\|^2+1\end{matrix}\right]
\succ 0.
\end{eqnarray*}
In other words, Case (a) in Theorem \ref{new1} holds for
$f_{\epsilon}(x)$ and $h(x)$. The S-lemma with equality for the pair
$f_{\epsilon}(x)$ and $h(x)$ thus implies that, from
(\ref{f_epsilon}), there must be an associated
$\lambda_{\epsilon}\in\Bbb R$ for any $\epsilon>0$ such that
\[
f_{\epsilon}(x)+\lambda_{\epsilon}h(x)
=f(x)+ \lambda_{\epsilon}h(x) + \epsilon(x^Tx + 1)\ge 0,~\forall x\in\Bbb R^n,
\]
which proves the theorem.
\end{proof}

Theorem \ref{new1:2} generalizes the regularized S-lemma. Consider
$\widehat{h}(x,z)=h(x)+z^2=0.$ Assume that $h(x)\le0$ fails Slater's
condition. Then, $\widehat{h}(x,z)=h(x)+z^2\ge0, \forall x\in\Bbb
R^n,~\forall z\in\Bbb R$ fails Assumption \ref{as1}. Since ${\rm
(S_1)}$ can be equivalently rephrased as
\begin{equation}
(\forall x\in\Bbb R^n,~\forall z\in\Bbb R)~~\widehat{h}(x,z)=h(x)+z^2=0~\Longrightarrow~
\widehat{f}(x,z)=f(x)\ge 0,\label{S1-eqiv}
\end{equation}
we can apply Theorem \ref{new1:2} to $(\widehat{f},\widehat{h})$ and
obtain
\begin{equation}
(\forall \epsilon>0) (\exists \lambda_{\epsilon})
(\forall x\in\Bbb R^n,~\forall z\in\Bbb R)
~f(x)+ \lambda_{\epsilon}(h(x)+z^2)+ \epsilon(x^Tx + 1) \ge 0. \label{lm-2}
\end{equation}
Let $z\rightarrow \infty$ in (\ref{lm-2}), we have
$\lambda_{\epsilon}\ge 0$ for all $\epsilon>0$. The validity of
${\rm (S^r_2)}$, and hence ${\rm (S_1)}$ $\sim$ ${\rm (S^r_2)}$, is
concluded from setting $z=0$ in (\ref{lm-2}).

The original proof of the regularized S-lemma in \cite{J09a}, based
on Brickman's theorem \cite{Br} and Mart\'{\i}nez-Legaz's result
\cite{Mar}, is a bit tedious. Our argument by applying Theorem
\ref{new1:2} can be viewed as a direct consequence of Finsler's
Theorem \cite{F}, which is more direct and simple.

%

\section{The S-lemma with equality when Assumption \ref{as1} holds}

Throughout this section, we always assume that $h$ takes both
positive and negative values (Assumption \ref{as1}). It is frequent
to consider the homogenized version by introducing a new variable
$t\in \Bbb R$ as follows:
\begin{eqnarray*}
&&\widetilde{f}(x,t)=x^TAx+2ta^Tx+ct^2,  \\
&&\widetilde{h}(x,t)=x^TBx+2tb^Tx+dt^2.
\end{eqnarray*}
If $t\not=0$, (${\rm E_2}$) implies that
\[
(\exists\mu\in\mathbb{R})(\forall x\in \Bbb R^n)~\widetilde{f}(x,t) + \mu \widetilde{h}(x,t)=
t^2\left(f\left(\frac{x}{t}\right) + \mu h\left(\frac{x}{t}\right)\right)
\ge0.
\]
For $t=0$, there is
\[
(\exists\mu\in\mathbb{R})(\forall x\in \Bbb R^n)~\widetilde{f}(x,0) + \mu \widetilde{h}(x,0)=\lim_{t\rightarrow 0}
\widetilde{f}(x,t) + \mu \widetilde{h}(x,t) \ge0.
\]
Consequently, the validity of (${\rm E_2}$) implies that of its
homogenized version
\begin{itemize}
\item[]($\widetilde{\rm E}_2$)~~$(\exists\mu\in\mathbb{R})(\forall x\in
\Bbb R^n, \forall t\in \mathbb{R})$ $\widetilde{f}(x,t) + \mu
\widetilde{h}(x,t)\ge0,$
\end{itemize}
and vice versa (by setting $t=1$). So we have (${\rm
E_2}$)$\sim$($\widetilde{\rm E}_2$). On the other hand, by the
S-lemma with equality for a homogeneous quadratic system under
Assumption \ref{as1} and S-Condition 3, ($\widetilde{\rm
E}_2$)$\sim$($\widetilde{\rm E}_1$) below:
\begin{itemize}
\item[]($\widetilde{\rm E}_1$)~$(\forall x\in
\Bbb R^n, \forall t\in \mathbb{R})$
$~x^TBx+2tb^Tx+dt^2=0~\Rightarrow~x^TAx+2ta^Tx+ct^2\ge 0.$
\end{itemize}
Comparing ($\widetilde{\rm E}_1$) with (${\rm E_1}$), we only know
they are equivalent when $t\not=0$, by rewriting
\begin{equation}
(\forall x\in \mathbb{R}^n,t\not=0)~x^TBx+2 tb^Tx+dt^2 =0
\Longrightarrow x^TAx+2 ta^Tx+ct^2 \ge 0
\label{eq:new1}
\end{equation}
as
\[(\forall x\in \mathbb{R}^n)~
\left(\frac{x}{t}\right)^TB\left(\frac{x}{t}\right)+2 b^T
\left(\frac{x}{t}\right)+d =0 \Longrightarrow
\left(\frac{x}{t}\right)^TA\left(\frac{x}{t}\right)+2
a^T\left(\frac{x}{t}\right)+c \ge 0.
\]
Therefore, if we are to argue that (${\rm E_1}$)$\sim$(${\rm E_2}$),
it amounts to finding conditions under which (${\rm E_1}$)
$\Longrightarrow$ ($\widetilde{\rm E}_1$) for $t=0$. That is, we
need conditions for the following compound statement to hold:
\begin{equation}
\left\{({\rm
E_1}): \inf\limits_{h(x)=0}f(x)\ge0\right\} \Longrightarrow
\left\{(\forall x\in \mathbb{R}^n)~x^TBx =0~\Longrightarrow~x^TAx
\ge 0\right\}. \label{eq:new2}
\end{equation}


We now present and prove the necessary and sufficient conditions for
the S-lemma with equality to be valid. 
%

\begin{thm}\label{thm:m}
Under Assumption \ref{as1} that $h(x)$ takes both positive and
negative values, the S-lemma with equality holds except that $A$ has
exactly one negative eigenvalue, $B=0$, $b\neq 0$ and
\begin{equation}
\left[\begin{array}{cc}V^TAV&V^T(Ax_0+a)\\
(x_0^TA+a^T)V&f(x_0)\end{array}\right]\succeq 0,\label{m}
\end{equation}
where $x_0=-\frac{d}{2b^Tb}b$, $V\in \Bbb R^{n\times (n-1)}$ is the
matrix basis of $\mathcal{N}(b)$.
\end{thm}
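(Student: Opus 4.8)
The plan is to build directly on the reduction carried out in the preamble of this section. Since ${\rm (E_2)}\Rightarrow{\rm (E_1)}$ always, and since the homogenization argument establishes ${\rm (E_2)}\sim(\widetilde{\rm E}_2)\sim(\widetilde{\rm E}_1)$ while ${\rm (E_1)}$ coincides with $(\widetilde{\rm E}_1)$ on the slice $t\neq0$, the equivalence ${\rm (E_1)}\sim{\rm (E_2)}$ holds \emph{if and only if} the implication (\ref{eq:new2}) is valid, namely
\[
\Big\{\inf_{h(x)=0}f(x)\ge0\Big\}\;\Longrightarrow\;\Big\{\forall x:\ x^TBx=0\Rightarrow x^TAx\ge0\Big\}.
\]
Thus the whole theorem reduces to deciding, for each $(f,h)$, whether this single implication is true; the exceptional family must be exactly the set of pairs for which ${\rm (E_1)}$ holds while the conclusion $x^TBx=0\Rightarrow x^TAx\ge0$ fails. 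I would split the analysis according to whether $B=0$ or $B\neq0$.

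When $B=0$ (so that $b\neq0$, since Assumption \ref{as1} forbids a constant $h$), the conclusion of (\ref{eq:new2}) collapses to $A\succeq0$, while parametrizing the hyperplane $\{h=0\}$ as $x_0+Vy$ shows, exactly as in the computation behind Proposition \ref{prop2}, that ${\rm (E_1)}$ is \emph{equivalent} to the positive semidefiniteness of the matrix (\ref{m}). Reading off its top-left block gives $V^TAV\succeq0$, i.e. $A$ is positive semidefinite on the codimension-one subspace $\mathcal N(b)$; by Cauchy interlacing this forces $A$ to have \emph{at most one} negative eigenvalue. Hence, if ${\rm (E_1)}$ holds, either $A\succeq0$ (and (\ref{eq:new2}) is satisfied) or $A$ has exactly one negative eigenvalue (and $A\succeq0$ fails). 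Therefore (\ref{eq:new2}), and with it ${\rm (E_1)}\sim{\rm (E_2)}$, fails precisely when $A$ has exactly one negative eigenvalue and (\ref{m})$\succeq0$; a one-line check that $f+\mu h$ is unbounded below for every $\mu$ (because $A\not\succeq0$ and $h$ is affine) confirms that this case is genuine, with ${\rm (E_1)}$ true and ${\rm (E_2)}$ false.

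The substantive case is $B\neq0$, where I claim (\ref{eq:new2}) holds unconditionally, so that ${\rm (E_1)}\sim{\rm (E_2)}$ always. Arguing by contradiction, suppose ${\rm (E_1)}$ holds yet there is a direction $\bar x$ with $\bar x^TB\bar x=0$ but $\bar x^TA\bar x<0$. The strategy is to exhibit a curve $x(s)$ lying entirely in $\{h=0\}$, escaping to infinity in the direction $\bar x$, along which $f(x(s))\to-\infty$, contradicting ${\rm (E_1)}$. I would treat three subcases. If $B\bar x\neq0$, take $w=B\bar x$ and solve $h(s\bar x+rw)=0$ for $r$; one branch stays \emph{bounded} as $s\to\infty$, so $f(s\bar x+rw)=s^2\bar x^TA\bar x+O(s)\to-\infty$. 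If $B\bar x=0$ and $b^T\bar x=0$, the full line $z+s\bar x$ through any $z\in\{h=0\}$ lies in the constraint set and $f$ again tends to $-\infty$. The delicate subcase is $B\bar x=0$ with $b^T\bar x\neq0$: here no straight line survives in $\{h=0\}$, so I pick any $w$ with $w^TBw\neq0$ (available since $B\neq0$) and solve the scalar quadratic $h(s\bar x+g\,w)=0$; choosing the sign of $s\to\pm\infty$ to keep the discriminant positive produces a root $g(s)=O(\sqrt{|s|})$, which grows strictly slower than $s$, so that the dominant term $s^2\bar x^TA\bar x$ still drives $f\to-\infty$.

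The main obstacle is precisely this last subcase ($B\bar x=0,\ b^T\bar x\neq0$). Because the constraint is genuinely curved there, one cannot move along $\bar x$ at unit cost; the correction $g(s)$ is forced to be unbounded, and the whole argument hinges on verifying that $g(s)$ grows only like $\sqrt{|s|}$ — slow enough to be dominated by the $s^2\bar x^TA\bar x$ term — together with the sign bookkeeping needed to guarantee a real correction for arbitrarily large $|s|$. Once these three constructions are in place, combining the $B=0$ and $B\neq0$ analyses yields exactly the stated characterization of the exceptional family.
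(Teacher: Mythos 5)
Your proof is correct, and while it shares the paper's skeleton---the same reduction of ${\rm (E_1)}\sim{\rm (E_2)}$ to the implication (\ref{eq:new2}) via homogenization, the same treatment of $B=0$ through the parametrization $x_0+Vy$ (so that ${\rm (E_1)}$ is equivalent to (\ref{m}) being positive semidefinite, with interlacing forcing $A$ to have at most one negative eigenvalue, which is what the paper argues implicitly), and the same overall strategy for $B\neq0$ of producing a curve in $\{x:h(x)=0\}$ along which $f\to-\infty$---your case decomposition for $B\neq0$ is genuinely different and markedly simpler. The paper splits on $b^Tv$: case (a) $b^Tv=0$; case (b) $Bv=0$, $b^Tv\neq0$; and the hard case (c) $Bv\neq0$, $b^Tv\neq0$, which forces $B$ indefinite and then requires diagonalization and a rank analysis (subcase (c-1) with ${\rm rank}(B)\ge3$, a perturbation of $v$ and a conjugate line; subcases (c-2)/(c-3) with ${\rm rank}(B)=2$, an oblique line, respectively an explicit two-variable elimination culminating in $\widehat{a}_{11}-2|\widehat{a}_{12}|+\widehat{a}_{22}<0$). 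You instead split on $Bv$: when $Bv\neq0$---regardless of the sign or vanishing of $b^Tv$---you correct along $w=Bv$, and the key identity $v^TBw=\|Bv\|^2>0$ makes $h(sv+rw)$, viewed as a polynomial in $r$, have an $r$-coefficient growing linearly in $s$; hence for all large $|s|$ one real root $r(s)$ stays bounded (it tends to $-b^Tv/\|Bv\|^2$, and when $w^TBw=0$ the equation is linear in $r$ with the same limit), so $f(sv+r(s)w)=s^2\,v^TAv+O(s)\to-\infty$. This single computation absorbs the paper's entire case (c) together with the $Bv\neq0$ portion of its case (a). Your remaining subcases are sound and parallel the paper's: for $Bv=0$, $b^Tv=0$ a full line through any feasible point works, and for $Bv=0$, $b^Tv\neq0$ your correction $g(s)=O(\sqrt{|s|})$ along any $w$ with $w^TBw\neq0$ (such $w$ exists since $B\neq0$, and the sign of $s\to\pm\infty$ can always be chosen to keep the discriminant of $g^2w^TBw+2g\,b^Tw+2s\,b^Tv+d=0$ positive, while the even leading term $s^2v^TAv$ dominates the $O(|s|^{3/2})$ cross term either way); the paper's case (b) instead runs along an eigenvector of $B$ with an $O(\gamma^2)$ correction in the $v$ direction, exploiting degree-$4$ versus degree-$2$ growth---both devices are valid. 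What your route buys is uniformity and brevity (no diagonalization, rank, or perturbation arguments and no two-variable reduction); what the paper's longer route exhibits is the finer geometry of $\{x:h(x)=0\}$ (cylindroid versus cone), which the theorem itself does not require.
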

\begin{proof} The statement of the theorem is the same as
(${\rm E_1}$)$\not\sim$(${\rm E_2}$) if, and only if $B=0,~b\not=0$,
$A$ has exactly one negative eigenvalue and $(\ref{m})$ holds. The
proof is organized in the following sequence:

(Sufficiency) Suppose $B=0,~b\not=0$, $A$ has exactly one negative
eigenvalue and $(\ref{m})$ holds. We show that (\ref{eq:new2}) is a
false statement. That is, (${\rm E_1}$)$\not\sim$(${\rm E_2}$).

(Necessity) Suppose (\ref{eq:new2}) is a false statement such that
(${\rm E_1}$) is true but
\begin{equation}
(\forall x\in \mathbb{R}^n) ~~x^TBx =0~\Longrightarrow~x^TAx
\ge 0\label{eq:new}
\end{equation}
fails. Through a case-by-case analysis (cases (a), (b), (c-1), (c-2)
and (c-3) below), we show that there must be $B=0$. Once this is
obtained, it follows immediately that $b\not=0$, $(\ref{m})$ holds,
and $A$ has exactly one negative eigenvalue.\vskip0.2cm

\noindent{\bf(Proof for sufficiency)} From $B=0,~b\not=0$, we
observe that the set $\{x\in\Bbb R^n:h(x)=0\}$ is a linear variety
of $n-1$ dimension
\begin{equation}
\{x\in\Bbb R^n:h(x)=0\}=\{x_0+Vy: y\in \Bbb R^{n-1}\}\label{eq:new:1}
\end{equation}
with $x_0=-\frac{d}{2b^Tb}b$ being a particular solution of $h(x)=0$
and $V$ is a matrix basis of $\mathcal{N}(b)$. Then, (${\rm E_1}$)
becomes an unconstrained minimization problem
\begin{equation}
\inf_{y\in \Bbb R^{n-1}}\left\{ f(x_0+Vy)=f(x_0)+2(x_0^TA+a^T)Vy+y^TV^TAVy\right\}\ge 0\label{eq:new:2}
\end{equation}
where ``$\ge0$'' comes from $(\ref{m})$. That is, (${\rm E_1}$) is
true. However, from $B=0$ and $A\not\succeq0$, we know
(\ref{eq:new}) is wrong so that (${\rm E_1}$)$\not\sim$(${\rm E_2}$)
and the sufficiency is proved.\vskip0.2cm

\noindent {\bf(Proof for necessity)} We defer the proof for $B=0$ to
later discussions but first assume that it is true. We show that,
when $B=0$, (${\rm E_1}$) is true and (\ref{eq:new}) fails, it
follows immediately that $b\not=0$, $(\ref{m})$ holds, and $A$ has
exactly one negative eigenvalue.

Since $h(x)$ takes both positive and negative
values, $B=0$ implies that $b\not=0$ and then the set $\{x\in\Bbb
R^n:h(x)=0\}$ is the $n-1$ dimensional linear variety
(\ref{eq:new:1}). Since (${\rm E_1}$) is true, we obtain
(\ref{eq:new:2}), which gives $(\ref{m})$ and $V^TAV\succeq 0$. As
$B=0$ and (\ref{eq:new}) fails, we have $A\not\succeq0$. Then, the
matrix $A$ must have exactly one negative eigenvalue.

In the remaining part of the necessity proof, we show $B=0$ by
contradiction. That is, we are going to argue that ``if $B\not=0$
and (\ref{eq:new}) fails, then $\inf\limits_{h(x)=0}f(x)=-\infty$
(so (${\rm E_1}$) fails too).''

Since (\ref{eq:new}) fails, there is a $v\in \Bbb R^n$ such that
\begin{equation}
v^TBv=0,~v^TAv<0. \label{v}
\end{equation}
This $v$ is to be utilized to construct a curve
$\{y+\alpha(y)v:~y\in\Bbb R^n,~\alpha(y)\in \Bbb R\}$ on
$\{x:~h(x)=0\}$ upon which  $f$ is unbounded below.

For this purpose, we derive the following formulae.
\begin{eqnarray}
h(y+\alpha(y)v)&=&(y+\alpha(y)v)^TB(y+\alpha(y)v)+2b^T(y+\alpha(y)v)+d\nonumber\\
&=&h(y)+2y^T(Bv)\alpha(y)+2(b^Tv)\alpha(y);\label{curve-1}
\end{eqnarray}
and
\begin{eqnarray}
f(y+\alpha(y)v)&=&(y+\alpha(y)v)^TA(y+\alpha(y)v)+2a^T(y+\alpha(y)v)+c\nonumber\\
&=&f(y)+2y^T(Av)\alpha(y)+2(a^Tv)\alpha(y)+(v^TAv)(\alpha(y))^2.\label{curve-2}
\end{eqnarray}

From (\ref{curve-1}), we can construct $\{y+\alpha(y)v:~y\in\Bbb
R^n,~\alpha(y)\in \Bbb R\}$ on $\{x:~h(x)=0\}$ in different ways
based on three cases:
 Case (a) for $b^Tv=0$; Case (b) for
$b^Tv\neq0,~Bv=0$ and Case (c) for $b^Tv\neq0,~Bv\neq0$. For
convenience, we may assume $d=0$, i.e., $h(0)=0.$ If this is not the
case, we choose a nonzero vector $x'\in \Bbb R^n$ such that
$h(x')=0$ and then translate the origin there.

\begin{itemize}
\item[(a)] Suppose $b^Tv=0$. In this case, we choose $y=0$ and let $\alpha(y)$
be any real number $\alpha$ in (\ref{curve-1}). Due to $h(0)=0$,
it is easy to see that $h(\alpha v)=0$ for all $\alpha$.
Therefore, the set $\{x:~h(x)=0\}$ contains a straight line
$\{\alpha v:~\alpha\in\Bbb R\}$ which goes through the origin in
the direction of $v$. The values of $f$ on this line can be
easily read from (\ref{curve-2}) to have
\[
\inf_{\alpha}\left\{f(\alpha v)= c+2(a^Tv)\alpha+(v^TAv)\alpha^2\right\}=-\infty
\]
where $v^TAv<0$ due to (\ref{v}). Therefore, we have
$\inf\limits_{h(x)=0}f(x)=-\infty$.
\item[(b)] Suppose $Bv=0$ and $b^Tv\neq0$. For any $y\in \Bbb R^n$, let
\[
\alpha(y)=-\frac{h(y)}{2b^Tv}.
\]
Then, $y+\alpha(y) v$ satisfies $h(y+\alpha(y)
v)=h(y)+2(b^Tv)\alpha(y)=0.$ Since $B\not=0$, there is an
eigenvector $u\not=0$ corresponding to a nonzero eigenvalue
$\sigma$ of $B$, i.e., $Bu=\sigma u$. Consider $y$ to be the
line $\{\gamma u:~\gamma\in\Bbb R\}$ spanned by $u$. Then,
$\Gamma=\{\gamma u+\alpha(\gamma u) v:~\gamma\in\Bbb R\}$ is a
curve on $\{x:~h(x)=0\}$. The values of $f$ on $\Gamma$ can be
verified to be unbounded below by
\begin{eqnarray*}
&&\inf_{\gamma\in\Bbb R}\left\{f(\gamma u+\alpha(\gamma u) v)\right\} \\
&=&\inf_{\gamma\in\Bbb R}\left\{f(\gamma u)-2(\gamma Au+a)^T v
\frac{h(\gamma u)}{2b^T v} +v^T A v
\frac{h^2(\gamma u)}{(2b^T v)^2}\right\}\\
&=&-\infty
\end{eqnarray*}
since the coefficient $\frac{v^T A v}{(2b^T v)^2}<0$ and
$h^2(\gamma u)=(\sigma\gamma^2\|u\|^2+2\gamma b^Tu)^2$ is a
polynomial of degree 4 in $\gamma$ whereas $f(\gamma u)$ is only
of degree 2 and $\gamma h(\gamma u)$ is of degree 3.

\item[(c)] Suppose $Bv\neq 0$ and $b^Tv\neq0$.
In this case, $B$ is indefinite; otherwise
$B\succeq(\preceq)~0$, $v^TBv=0$ would imply that $Bv=0$.
Without loss of generality, assume that $B$ is diagonal after
performing the eigenvalue decomposition on $B$. We also assume
that  $B={\rm Diag}( B_{ii})$ where
\[
B_{ii}=\left\{\begin{array}{ccc}1,& i \in I,\\-1,& i \in J,\\
0,&i \in \{1,2,\ldots,n\}\setminus(I\cup J).\end{array}\right.
\]
It will soon be clear that only the signs of the entries matter.
Since $B$ is indefinite, both $I$ and $J$ are non-empty, i.e.,
$\#I \ge 1$ and $\#J \ge 1$. It follows that the rank of $B$ is
at least 2. When Rank$(B)=2$, the homogeneous quadratic surface
$x^TBx=0$ is 
the union of two vertical-like hyperplanes (a type of cylindroid
in geometry), which will be dealt with separately. When
Rank$(B)=3$, $x^TBx=0$ is a second order cone (a double circular
cone). When Rank$(B)>3$, it is sure that $x^TBx=0$ is not the
union of hyperplanes since there is at least one
three-dimensional second order cone embedded as a cross section.
\begin{itemize}
\item[(c-1)] Suppose Rank$(B)\ge 3.$ We assume that $I=\{i_1,i_2,\ldots,i_m\}$
and $J=\{j_1,j_2,\ldots,j_k\}$ with $m\ge2,~k\ge1$. We first
claim that it is always possible to choose one $v$ such that
$v^TBv=0,~v^TAv<0$ and $v_{i}^2\neq v_{j}^2$ for some $i\in
I$ and $j\in J$. If the vector $v$ satisfying (\ref{v}) does
not meet the requirement, namely $v_{i}^2= v_{j}^2$ for all
$i\in I$ and $j\in J$, we can perturb $v$ by $\epsilon w$
where $\epsilon>0$ is a sufficiently small constant; and
\[
w_{i}=\left\{\begin{array}{ccc}1,& i\in\{i_2,j_1\}\\0,& o.w.\end{array}\right.
\]
Then, $Bw=0$; $(v+\epsilon w)^TB(v+\epsilon w)=0$;
$(v+\epsilon w)^TA(v+\epsilon w)<0$; and $(v_{i_1}+\epsilon
w_{i_1})^2=v^2_{i_1}\not=(v_{j_1}+\epsilon)^2=(v_{j_1}+\epsilon
w_{j_1})^2$. So we can assume that $v^TBv=0,~v^TAv<0,$
$Bv\neq 0$, $b^Tv\neq0$ and $v_{i_1}^2\neq v_{j_1}^2$.

Define a straight line
\begin{equation}
x_{\beta}=\beta (v_{j_1}) e_1+\beta(v_{i_1})
e_2,~\forall \beta\in \Bbb R.\label{x_beta}
\end{equation}
Since $ x_{\beta}^TBv=0,~\forall \beta\in \Bbb R,$ the line
$x_{\beta}$ and the vector $v$ are conjugate with respect to
$B$. By defining
\[
\alpha(x_{\beta})=-\frac{h(x_{\beta})}{2b^Tv},
\]
and by the conjugacy, we see from (\ref{curve-1}) that
\[
h(x_{\beta}+\alpha(x_{\beta}) v)=h(x_{\beta})+2(b^Tv)\alpha(x_{\beta})=0
\]
and $x_\beta+\alpha v$ is a curve on $h(x)=0$ upon which
$f(x)$ is unbounded below as
\begin{eqnarray*}
&&\inf_{h(x_{\beta}+\alpha v)=0}~\left\{f(x_{\beta}+\alpha v)=f(x_{\beta})
+2(Ax_{\beta}+a)^T v \alpha +v^T A v\alpha^2\right\}\\
&&~~=\inf\limits_{\beta\in \mathbb{R}}~\left\{f(x_{\beta})-2(Ax_{\beta}+a)^T v
\frac{h(x_{\beta})}{2b^Tv}
+v^T Av \frac{h^2(x_{\beta})}{(2b^Tv)^2}\right\}\\
&&~~=-\infty,
\end{eqnarray*}
where the coefficient $\frac{v^T A v}{(2b^T v)^2}<0$ and due
to $(v_{j_1})^2 -(v_{i_1})^2\not=0$,
$$h^2(x_{\beta})=\left(\beta^2 (v_{j_1})^2 - \beta^2
(v_{i_1})^2 +2\beta b_{i_1} v_{j_1} + 2\beta b_{j_1}
v_{i_1}\right)^2$$ is a polynomial of degree 4 in $\beta$
whereas $f(x_{\beta})$ is only of degree 2 and $\beta
h(x_{\beta})$ is of degree 3. Finally, we remark that when
the diagonal elements of $B$ are not just $0, 1, -1$, the
line $x_{\beta}$ defined in (\ref{x_beta}) can be adjusted
through the linear combination of $e_1$ and $e_2$ to
maintain the conjugacy to $v$ and the rest of the proof
follows immediately.

\item[(c-2)] Suppose Rank$(B)= 2$ with $B_{11}=1$, $B_{22}=-1$ and $B_{ii}=0$,
for $i\ge3$. In this subcase (c-2), we handle
$\widetilde{b}=(b_3,\ldots,b_n)^T\neq 0$ whereas leaving
$\widetilde{b}=0$ to (c-3) next. Since $x^TBx=0$ is the
union of two cylindroid hyperplanes and
$\widetilde{b}\not=0$, we show that there is an oblique
cross section of $h(x)=0$, which contains a straight line in
the direction of $v$ for any $v$ satisfying (\ref{v}) and
$Bv\neq 0$. Since it must be $v_1^2=v_2^2\neq 0$, the line
has a formula
\begin{equation*}
l(t)=\left(\begin{array}{c}0\\y\\ \widetilde{z}\end{array}\right)
+tv,~t\in\Bbb R~~\hbox{ and }~~ h(l(t))=0
\end{equation*}
where
$$y=\frac{b^Tv}{v_2};~\widetilde{z}=\frac{(\frac{{b^Tv}}{v_2})^2-\frac{2b_2(b^Tv)}{v_2}}
{2\widetilde{b}^T\widetilde{b}} \widetilde{b}.$$ Denoting
$x_0^T=(0,y,\widetilde{z}^T)$ and substituting $l(t)$ in
$f(x)$ yields
$$f(x_0+tv)=(v^TAv)t^2+2(x_0^TAv+a^Tv)t+f(x_0),$$
which tends to $-\infty$ as $|t|\longrightarrow \infty$ due
to $v^TAv<0.$
\item[(c-3)] Suppose that Rank$(B)= 2$ with $B_{11}=1$, $B_{22}=-1$, $B_{ii}=0$,
for $i\ge3$ and $\widetilde{b}= 0$. Then, the optimization
problem $\inf\limits_{h(x)=0} f(x)$ is unconstrained in the
last $n-2$ variables. Denote
\[
A=\left [ \begin{array}{cc}A_1&A_2\\A_2^T& A_3\end{array}\right]
\]
where $A_1\in\Bbb R^{2\times 2}$. Let $A_3=U^T\Sigma U$ be
the eigenvalue decomposition with $U$ orthogonal and
$\Sigma={\rm Diag}(\sigma_i)$. The Moore-Penrose generalized
inverse of $A_3$ is $A_3^+=U^T\Sigma^+ U$, where
$\Sigma^+={\rm Diag}(\sigma_i^{-1})$ when $\sigma_i\not=0$
and $0^{-1}=0$. Define
\[
W=\left [ \begin{array}{cc}I_2&-A_2A_3^+\\0& U\end{array}\right].
\]
Then we have
\[
W
\left [ \begin{array}{cc}A_1&A_2\\A_2^T& A_3\end{array}\right]
W^T
=\left [ \begin{array}{cc}\widehat{A}_1&0\\0& \Sigma\end{array}\right],~
WBW^T
=\left [ \begin{array}{ccc}1&0&0\\0&-1& 0\\0&0&0_{n-2}\end{array}\right]=B
\]
where $\widehat{A}_1=A_1-A_2A_3^+A_2^T$. Introducing the
coordinate change
\begin{equation*}
\left(\begin{array}{c}z\\y\end{array}\right)=W^{-T}x;~\left(\begin{array}{c}\widehat{a}_z\\\widehat{a}_y
\end{array}\right)=Wa;~\widehat{b}=\left(\begin{array}{c}\widehat{b}_1\\\widehat{b}_2
\end{array}\right)=Wb
\end{equation*} where $z\in\Bbb
R^2,~y\in\Bbb R^{n-2}$ yields
\begin{eqnarray}
\inf_{h(x)=0}~f(x) = &\inf& z^T\widehat{A}_1z+2\widehat{a}_z^Tz+y^T\Sigma y
+2\widehat{a}_y^Ty+c \label{y-convex}\\
&{\rm s.t.}& z^T\left[\begin{array}{cc}1&0\\0&-1\end{array}\right]z+2\widehat{b}^Tz=0,
~z\in \Bbb R^2,~y\in \Bbb R^{n-2}.\nonumber
\end{eqnarray}
Obviously, due to $B_{ii}=0,~i\ge3$ and
$\widetilde{b}=(b_3,b_4,\ldots,b_n)= 0$, the variable $y$ is
unconstrained. If there is some $\sigma_i<0$ (in which case
$A_3\not\succeq0$); or some $\sigma_i=0$ but
$(\widehat{a}_y)_i\not=0$ (in which case at least one of the
two columns of $A_2^T$ is not in the range of $A_3$), the
problem $\inf_{h(x)=0}~f(x)$ is surely $-\infty$. Therefore,
we only have to concentrate on the case that, for all
$i\in\{3,4,\ldots,n\}$, either $\sigma_i>0$ or
$\sigma_i=(\widehat{a}_y)_i=0$. That is, the function
$$y^T\Sigma y+2\widehat{a}_y^Ty+c$$
is a convex sum-of-squares quadratic with no pure linear
terms, which has a global optimal solution
$y^*=-\Sigma^+\widetilde{a}_y$. Substituting $y^*$ into
(\ref{y-convex}), it reduces to the following
quadratic optimization problem with two variables:
\begin{eqnarray}
\inf_{h(x)=0}~f(x) = &\inf\limits_{z\in \Bbb R^2}& z^T\widehat{A}_1z+2\widehat{a}_z^Tz-
\widehat{a}_y^T\Sigma^+\widehat{a}_y+c \label{p1}\\
&{\rm s.t.}& z^T\left[\begin{array}{cc}1&0\\0&-1\end{array}\right]z+2\widehat{b}^Tz=0.\label{p2}
\end{eqnarray}
Notice that the equation (\ref{p2}) can be used to solve
either $z_1$ or $z_2$. To see this, we first suppose that
$|\widehat{b}_1|\ge |\widehat{b}_2|$. By introducing
$$\delta=\sqrt{\widehat{b}_1^2-\widehat{b}_2^2}-\widehat{b}_1,~
\widehat{z}_1=z_1-\delta,~
\widehat{z}_2=z_2-\widehat{b}_2,$$ we obtain a new expression for (\ref{p2}):
\begin{equation}
    z_1^2-z_2^2+2\widehat{b}_1 z_1+2\widehat{b}_2 z_2
    =\widehat{z}_1^2-\widehat{z}_2^2+2(\widehat{b}_1+\delta)\widehat{z}_1 \label{elim-l-1}
\end{equation}
in which there is no linear term of $\widehat{z}_2$.
Similarly for $|\widehat{b}_1|< |\widehat{b}_2|$, we can
eliminate the linear term of $\widehat{z}_1$ by letting
\begin{equation}
\delta=\sqrt{\widehat{b}_2^2-\widehat{b}_1^2}+\widehat{b}_2,~
\widehat{z}_1=z_1+\widehat{b}_1,~
\widehat{z}_2=z_2-\delta. \label{elim-l-2}
\end{equation}
We also notice that the transformation (\ref{elim-l-1}) or
(\ref{elim-l-2}) only affect the linear term $\widehat{a}_z$
of (\ref{p1}) but not the second order term $\widehat{A}_1$.
Consequently, we may assume $|\widehat{b}_1|\ge
|\widehat{b}_2|=0$ so that (\ref{p2}) can be solved as
\begin{equation}
z_2=\pm\sqrt{z_1^2+2\widehat{b}_1z_1}.\label{z_2}
\end{equation}
Denote $\widehat{A}_1=\left[\begin{array}{cc}
\widehat{a}_{11}&\widehat{a}_{12}\\\widehat{a}_{12}&
\widehat{a}_{22}\end{array}\right]$ and substitute
(\ref{z_2}) into (\ref{p1}). It becomes
\begin{eqnarray*}
\inf_{h(x)=0}~f(x) &\le&\inf\limits_{|z_1|\gg1}\left\{\widehat{a}_{11}z_1^2\pm 2
\widehat{a}_{12}
z_1\sqrt{z_1^2+2\widehat{b}_1z_1}+ \widehat{a}_{22}(z_1^2+2\widehat{b}_1z_1)\right. \\
&&~~ \left. +2(\widehat{a}_z)_1 z_1\pm2(\widehat{a}_z)_2  \sqrt{z_1^2+2\widehat{b}_1z_1}
- \widehat{a}_y^T\Sigma^+\widehat{a}_y+c\right\}\\
&= &\inf_{|z_1|\gg1} \left\{\left(\widehat{a}_{11}- 2 |\widehat{a}_{12}|
+ \widehat{a}_{22}\right)z_1^2+O(z_1)\right\}
\end{eqnarray*}
It remains to show, due to $v^TBv=0,~v^TAv<0,~Bv\not=0$ and
that we are in the case $\Sigma\succeq0$, the leading term
$\widehat{a}_{11}- 2 |\widehat{a}_{12}| +
\widehat{a}_{22}<0.$ We first perform the coordinate change
$\widehat{v}=W^{-T}v$ to give
 \begin{eqnarray}
 v^TBv=0&\Longrightarrow&\widehat{v}^T\left [ \begin{array}{ccc}1&0&0\\0&-1& 0\\0&0&0_{n-2}\end{array}
 \right]\widehat{v}=0~\Longrightarrow~\widehat{v}_1^2=\widehat{v}_2^2;\label{hat_v}\\
v^TAv<0&\Longrightarrow&\widehat{v}^T\left [ \begin{array}{cc}\widehat{A}_1&0\\0&
\Sigma\end{array}
\right]\widehat{v}<0
~\Longrightarrow~ \left[\begin{array}{c}\widehat{v}_1\\ \widehat{v}_2\end{array}\right]^T
\widetilde{A}_1
\left[\begin{array}{c}\widehat{v}_1\\ \widehat{v}_2\end{array}\right]<0.\label{hat_a}
\end{eqnarray}
Moveover, $Bv\neq 0$ implies that
$BW^T\widehat{v}=B\widehat{v}\neq 0$, i.e., either
$\widehat{v}_1\neq0$ or $\widehat{v}_2\neq0$. From
(\ref{hat_v}), it must be
$\widehat{v}_1=\widehat{v}_2\not=0$ or
$\widehat{v}_1=-\widehat{v}_2\not=0$. It follows from
(\ref{hat_a}) that one of the following equations holds:
\begin{eqnarray*}
&&\widehat{a}_{11}+\widehat{a}_{22}+2\widehat{a}_{12}<0,~\hbox{ for }
\widehat{v}_1=\widehat{v}_2;\\
&&\widehat{a}_{11}+\widehat{a}_{22}-2\widehat{a}_{12}<0, ~\hbox{ for }
\widehat{v}_1=-\widehat{v}_2.
\end{eqnarray*}
Combining together, we have
$\widehat{a}_{11}+\widehat{a}_{22}<2|\widehat{a}_{12}|.$ The
final subcase and the entire proof of the theorem is thus
complete.
\end{itemize}
\end{itemize}

\end{proof}

Theorem \ref{thm:m} generalizes the classical S-lemma (${\rm
S_1}$)$\sim$(${\rm S_2}$) under Slater's condition. First, like
(\ref{S1-eqiv}), we rewrite (${\rm S_1}$) in terms of
$\widehat{h}(x,z)=h(x)+z^2$ and $\widehat{f}(x,z)=f(x)$. Let
$\overline{x}$ be a Slater point of $h(x)\le0$. Then, Assumption
\ref{as1} can be easily checked to hold by
\[
\widehat{h}(\overline{x},0)=h(\overline{x})<0,~
\widehat{h}(\overline{x},1-h(\overline{x}))=h(\overline{x})+(1-h(\overline{x}))^2>0.
\]
Moreover, $\widehat{h}(x,z)$ has a non-zero Hessian that
$\nabla^2\widehat{h} \neq 0$. By Theorem \ref{thm:m},
\begin{equation}
(\exists\mu\in \mathbb{R})(\forall x\in \mathbb{R}^n,~\forall z\in \mathbb{R})~~
f(x) + \mu \widehat{h}(x,z)=f(x)+\mu h(x)+\mu z^2\ge0. \label{lm2}
\end{equation}
Then, $\mu\ge 0$ follows from $z\rightarrow \infty$ and setting
$z=0$ in (\ref{lm2}) yields (${\rm S_2}$). Therefore, under Slater's
condition, (${\rm S_1}$) implies (${\rm S_2}$) by Theorem
\ref{thm:m}.

Finally, when Assumption \ref{as1} holds but $B=0$, the S-lemma with
equality could possibly fail. In that case, we can formulate a
modified version of regularized S-lemma.

\begin{thm}\label{new1:3} Suppose that Assumption \ref{as1} holds but $B=0$.
The statement
\begin{equation}
h(x)=0 ~\Longrightarrow~ f (x) \ge 0,~\forall x\in\Bbb R^n \label{reg:1}
\end{equation}
is true if and only if
\begin{equation}
(\forall \epsilon>0) (\exists \lambda_{\epsilon}) (\forall x\in\Bbb R^n)
~f(x)+ \lambda_{\epsilon}(h(x))^2+ \epsilon(x^Tx + 1) \ge 0, \label{reg:2}
\end{equation}
\end{thm}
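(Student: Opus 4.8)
The plan is to reduce this theorem to Theorem \ref{new1:2} by squaring the constraint. Since $B=0$ and $h$ takes both positive and negative values, $h$ is a genuine affine function ($b\neq0$), so $\{x:h(x)=0\}=\{x:(h(x))^2=0\}$. I would therefore introduce the auxiliary quadratic $\widehat{h}(x):=(h(x))^2 = x^T(4bb^T)x + 2(2db)^Tx + d^2$, whose symmetric matrix $4bb^T\succeq0$ is positive semidefinite (of rank one). Because $\widehat{h}(x)\ge0$ for every $x$ and equals $0$ on the nonempty set $\{x:h(x)=0\}$, the function $\widehat{h}$ fails Assumption \ref{as1}: it never takes a negative value.

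Next I would observe that, with this substitution, the two statements of the theorem are literally the statements ${\rm (E_1)}$ and ${\rm (E^r_2)}$ of Theorem \ref{new1:2} for the pair $(f,\widehat{h})$. Indeed, (\ref{reg:1}) reads $\widehat{h}(x)=0\Rightarrow f(x)\ge0$, which is ${\rm (E_1)}$, while (\ref{reg:2}) reads $f(x)+\lambda_\epsilon \widehat{h}(x)+\epsilon(x^Tx+1)\ge0$, which is ${\rm (E^r_2)}$. Hence the entire theorem would follow at once by applying Theorem \ref{new1:2} to $(f,\widehat{h})$ in place of $(f,h)$, provided $\widehat{h}$ genuinely satisfies the hypothesis of that theorem. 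The easy direction (\ref{reg:2})$\Rightarrow$(\ref{reg:1}) is just the $\epsilon\to0$ limit, and all the substance lies in the converse, which Theorem \ref{new1:2} already supplies.

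The one point requiring care --- and the only place where anything must actually be checked --- is confirming that $\widehat{h}$ meets the hypothesis ``Assumption \ref{as1} violated with $\{x:\widehat{h}=0\}\neq\emptyset$'' in the precise algebraic form of Proposition \ref{prop1}. I would verify the three conditions of (\ref{12a}) for $\widehat{h}$: its matrix $4bb^T\succeq0$; its linear part $2db\in\mathcal{R}(4bb^T)=\mathrm{span}(b)$; and $-(2db)^T(4bb^T)^+(2db)+d^2=0$, which holds since $(4bb^T)^+=\tfrac14 bb^T/\|b\|^4$ forces the first term to equal $d^2$. Once this routine verification is in place there is no remaining obstacle, because Theorem \ref{new1:2} already absorbs the Finsler-type argument through Case (a) of Theorem \ref{new1}. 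Thus I expect the difficulty to be essentially nil beyond recognizing the correct squared-constraint reformulation, and the main risk is merely overlooking that $\widehat{h}$ must be checked against Proposition \ref{prop1} rather than assumed to fail Assumption \ref{as1} for free.
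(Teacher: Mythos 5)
Your proposal is correct and is essentially identical to the paper's own proof, which likewise applies Theorem \ref{new1:2} to the pair $(f,h^2)$ after observing that Assumption \ref{as1} cannot hold for $h^2$. Your explicit verification of the conditions in Proposition \ref{prop1} (including the computation $(4bb^T)^+=bb^T/(4\|b\|^4)$) is a routine check the paper leaves implicit, and it is carried out correctly.
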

\begin{proof}
It is sufficient to prove (\ref{reg:1}) implies (\ref{reg:2}).
First, (\ref{reg:1}) implies that
\[
(h(x))^2=0 ~\Longrightarrow~ f (x) \ge 0,~\forall x\in\Bbb R^n.
\]
Since Assumption \ref{as1} cannot hold for $h^2$, we apply Theorem
\ref{new1:2} to $(f,h^2)$ and complete the proof.
\end{proof}

Theorems \ref{new1:2}, \ref{thm:m} and \ref{new1:3} can be packed
together to yield the following regularized S-lemma with equality.
\begin{cor}
The statement
\begin{equation*}
h(x)=0 ~\Longrightarrow~ f (x) \ge 0,~\forall x\in\Bbb R^n
\end{equation*}
is true if and only if
\begin{equation*}
(\forall \epsilon>0) (\exists \lambda_{\epsilon}) (\forall x\in\Bbb R^n) ~f(x)+ \lambda_{\epsilon}(h(x))^{\phi(B)}+ \epsilon(x^Tx + 1) \ge 0,
\end{equation*}
where
\[
\phi(B)=\left\{\begin{array}{ll}1,& {\textnormal ~if~B\neq0},\\
2,&{\textnormal ~if~}B=0.\end{array}\right.
\]
\end{cor}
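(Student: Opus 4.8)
The plan is to prove the two implications of the biconditional separately and, for the substantive direction, to partition on whether $B=0$ --- which is exactly the dichotomy built into $\phi(B)$ --- so that each resulting subcase is settled by one of the already-proved Theorems \ref{new1:2}, \ref{thm:m}, and \ref{new1:3}. Almost no fresh computation is needed; the work is entirely in assembling the cases and checking that the exponents match. I would first dispatch the easy direction (the regularized inequality implies ${\rm (E_1)}$) uniformly for both values of $\phi$: fix any $x$ with $h(x)=0$, note that $(h(x))^{\phi(B)}=0$ whether $\phi(B)=1$ or $2$, so the regularized inequality reduces to $f(x)+\epsilon(x^Tx+1)\ge 0$ for all $\epsilon>0$, and letting $\epsilon\to 0^+$ gives $f(x)\ge 0$, i.e.\ ${\rm (E_1)}$.

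For the converse I would split on $B$. Suppose first $B\neq 0$, so $\phi(B)=1$. If Assumption \ref{as1} fails, Theorem \ref{new1:2} directly yields ${\rm (E_1)}\sim{\rm (E^r_2)}$, and ${\rm (E^r_2)}$ is exactly the claimed statement with the first power of $h$. If Assumption \ref{as1} holds, the key point is that $B\neq 0$ excludes the unique failure configuration of Theorem \ref{thm:m} (which forces $B=0$); hence the full S-lemma with equality holds, ${\rm (E_1)}\sim{\rm (E_2)}$, and ${\rm (E_2)}$ supplies a single $\mu$ with $f(x)+\mu h(x)\ge 0$ for all $x$. Taking $\lambda_\epsilon\equiv\mu$ and adding the nonnegative term $\epsilon(x^Tx+1)$ then gives the regularized inequality with $\phi=1$.

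Now suppose $B=0$, so $\phi(B)=2$. Then $h$ is affine, $h^2$ is quadratic with $h^2\ge 0$ everywhere, so Assumption \ref{as1} automatically fails for $h^2$; moreover $\{x:h(x)=0\}=\{x:h^2(x)=0\}$, so ${\rm (E_1)}$ for $(f,h)$ is identical to ${\rm (E_1)}$ for $(f,h^2)$. Applying Theorem \ref{new1:2} to the pair $(f,h^2)$ --- equivalently, invoking Theorem \ref{new1:3} --- produces $\lambda_\epsilon$ with $f(x)+\lambda_\epsilon(h(x))^2+\epsilon(x^Tx+1)\ge 0$, the desired statement with $\phi=2$. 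This subsumes the degenerate case $h\equiv 0$ as well, since there $h=h^2\equiv 0$ and ${\rm (E_1)}$ just asserts $f\ge 0$ everywhere.

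I expect the proof to carry no analytic difficulty; the main obstacle is organizational, namely keeping the case split exhaustive and the powers of $h$ consistent. The one step that genuinely requires care is $B\neq 0$ under Assumption \ref{as1}: there I must argue that $B\neq 0$ rules out the lone exceptional case of Theorem \ref{thm:m} so that the \emph{exact} (unregularized) S-lemma applies and $\lambda_\epsilon$ may be taken constant in $\epsilon$. The exponent dichotomy is then forced --- the first power of $h$ suffices precisely when $B\neq 0$, whereas the quadratic regularizer $h^2$ is the device that converts the $B=0$ case into one where Assumption \ref{as1} provably fails, so that Theorem \ref{new1:2} becomes applicable.
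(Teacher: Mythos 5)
Your proof is correct and takes essentially the same approach as the paper, which states the corollary exactly as a packaging of Theorems \ref{new1:2}, \ref{thm:m} and \ref{new1:3} --- the last of which the paper itself proves by applying Theorem \ref{new1:2} to the pair $(f,h^2)$, precisely your device for the $B=0$ case. Your explicit handling of the degenerate case $h\equiv 0$ (where Theorem \ref{new1:3} is inapplicable since Assumption \ref{as1} fails, but the direct application of Theorem \ref{new1:2} to $(f,h^2)$ still works) is a detail the paper leaves implicit.
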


%
%


\section{Application to solve (QP1EQC)}

Mor\'{e} in 1993 published an early result on the saddle point
optimality condition for (QP1EQC) under mild conditions \cite{M93}.
It states:
\begin{thm}[\cite{M93}, Thm  3.2]\label{mor}
Under Assumption \ref{as1} and  $B\neq0$, a vector $x^*$ is a global
minimizer of (QP1EQC) if and only if $h(x^*)=0$ and there is a
multiplier $\mu^*\in\Bbb R$ such that the Kuhn-Tucker condition
\begin{equation}
Ax^*+a+\mu^*(Bx^*+b)=0\label{kt}
\end{equation}
is satisfied with the second order condition $A+\mu^*B \succeq 0$.
\end{thm}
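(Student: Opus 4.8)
The plan is to derive Theorem~\ref{mor} as a direct application of the S-lemma with equality (Theorem~\ref{thm:m}). The crucial observation is that the hypothesis $B\neq0$ places the pair $(f,h)$ outside the single exceptional configuration of Theorem~\ref{thm:m} (which forces $B=0$, $b\neq0$ together with (\ref{m})), so under Assumption~\ref{as1} the equivalence ${\rm (E_1)}\sim{\rm (E_2)}$ is available, and in particular it applies to any shifted objective $f-\lambda$.

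For the sufficiency (``if'') direction, I would argue purely by convexity and avoid invoking the S-lemma altogether. Setting $g(x)=f(x)+\mu^* h(x)$, the Kuhn--Tucker condition (\ref{kt}) says exactly that $\tfrac12\nabla g(x^*)=(A+\mu^*B)x^*+(a+\mu^*b)=0$, while the second-order condition $A+\mu^*B\succeq0$ makes $g$ a convex quadratic. A convex function with vanishing gradient at $x^*$ attains its global minimum there, so $g(x)\ge g(x^*)$ for all $x$. Using $h(x^*)=0$ and restricting to the feasible set $\{x:h(x)=0\}$ then gives $f(x)=g(x)\ge g(x^*)=f(x^*)$ for every feasible $x$, which is precisely the global optimality of $x^*$.

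For the necessity (``only if'') direction, I would start from the optimal value $\lambda^*:=f(x^*)=v({\rm QP1EQC})$. Global optimality of $x^*$ means that $h(x)=0\Rightarrow f(x)-\lambda^*\ge0$, i.e. statement ${\rm (E_1)}$ holds for the pair $(f-\lambda^*,h)$. Since $B\neq0$, Theorem~\ref{thm:m} yields ${\rm (E_2)}$: there is a $\mu^*\in\mathbb{R}$ with $f(x)-\lambda^*+\mu^* h(x)\ge0$ for all $x$, i.e. $g(x):=f(x)+\mu^* h(x)\ge\lambda^*$ on all of $\mathbb{R}^n$. Because $h(x^*)=0$ forces $g(x^*)=\lambda^*$, the point $x^*$ is in fact an \emph{unconstrained} global minimizer of the quadratic $g$. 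Reading off the first- and second-order optimality conditions of an unconstrained quadratic minimum then gives $\nabla g(x^*)=0$, which is the Kuhn--Tucker equation (\ref{kt}), and the Hessian condition $A+\mu^*B\succeq0$, completing the proof.

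I do not expect a genuine obstacle here: the hard analysis has already been absorbed into Theorem~\ref{thm:m}. The only points requiring care are the verification that $B\neq0$ indeed rules out the exceptional case, and the clean passage from a \emph{constrained} minimizer of $f$ over $\{h=0\}$ to an \emph{unconstrained} minimizer of the aggregated quadratic $g=f+\mu^* h$; once that passage is made, the multiplier and both the Kuhn--Tucker and curvature conditions emerge from elementary facts about the minimizers of convex quadratics.
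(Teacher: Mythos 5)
Your proof is correct, but note that the paper itself offers no proof of this statement: Theorem \ref{mor} is quoted from Mor\'{e} \cite{M93} (Thm.~3.2) and used there as an external ingredient, e.g.\ in the proofs of Lemma \ref{pr:2} and Theorem \ref{thm:qp}. Your route is therefore genuinely different: you derive Mor\'{e}'s theorem as a corollary of the paper's main result, Theorem \ref{thm:m}. The derivation is sound in both directions --- sufficiency is the elementary observation that $g=f+\mu^*h$ is a convex quadratic with $\nabla g(x^*)=0$, hence globally minimized at $x^*$, and $g$ agrees with $f$ on $\{x: h(x)=0\}$; necessity applies ${\rm (E_1)}\sim{\rm (E_2)}$ to the pair $(f-\lambda^*,h)$, which is legitimate because the exceptional configuration in Theorem \ref{thm:m} requires $B=0$, excluded by hypothesis, and because shifting $f$ by the constant $\lambda^*$ changes neither $A$ nor $h$, so Assumption \ref{as1} and the exceptional-case test are unaffected. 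Crucially, the argument is not circular: the proof of Theorem \ref{thm:m} (Finsler's theorem, the Anstreicher--Wright criterion, and the case analysis (a), (b), (c-1)--(c-3)) nowhere invokes Mor\'{e}'s theorem. What your approach buys is self-containedness: it shows the paper could in principle dispense with the citation to \cite{M93}, since the passage from $v({\rm QP1EQC})>-\infty$ to an aggregated nonnegative quadratic is exactly the mechanism already exploited in (\ref{SD-B})--(\ref{SD-E}), and your reading-off of the first- and second-order conditions at the unconstrained minimizer of $g$ then recovers (\ref{kt}) and $A+\mu^*B\succeq0$. What Mor\'{e}'s original treatment buys is logical independence, as it predates the S-lemma with equality. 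A final stylistic point in your favor: your sufficiency direction uses neither Assumption \ref{as1} nor $B\neq0$, correctly reflecting that these hypotheses are needed only for the necessity half.
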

However, (\ref{kt}) may not always have a pair of solution
$(x^*,\lambda^*)$ since (QP1EQC) could be unbounded below or have an
unattainable optimal value. Even when the problem has an attainable
minimum, algorithmic approach for computing $(x^*,\lambda^*)$ from
(\ref{kt}) often required strong conditions such as the existence of
a positive definite matrix pencil $A+\mu B\succ0$ (also known as the
dual Slater condition), e.g. \cite{M93,Pong,SW,yz}. The dual Slater
condition is not practical since it is stricter than the two
matrices $A$ and $B$ being simultaneously diagonalizable via
congruence (SDC) \cite{Horn}. By the S-lemma with equality, we can
now solve (QP1EQC) directly by the standard SDP relaxation (because
it is tight) and a rank-one decomposition procedure (if necessary)
without resorting to the Kuhn-Tucker condition (\ref{kt}) and
without any assumption. We also analyze (QP1EQC) when it is
unbounded below; or is unattainable.

%

In fact, when $B=0$, the constraint is just $2b^Tx+d=0$. When
Assumption \ref{as1} fails, there must be $B\succeq(\preceq)0$ so
that the constraint is reduced to the first order condition
$Bx+b=0$. By the null space representation of $2b^Tx+d=0$ or
$Bx+b=0$, (QP1EQC) becomes an unconstrained problem. It would then
be either unbounded below or a convex unconstrained problem with an
attainable optimal solution.

Now it remains to consider (QP1EQC) for $B\not=0$ and under
Assumption \ref{as1}. Applying the S-lemma with equality (Theorem
\ref{thm:m}), we can recast (QP1EQC) as the following semidefinite
programming problems (SDP):
\begin{eqnarray}
v({\rm QP1EQC})&=& \sup\limits_{s\in\mathbb{R}}
\left\{s:
\left\{x\in\mathbb{R}^n|f(x)-s<0,h(x)=0\right\}=\emptyset\right\}\label{SD-B}\\
&=& \sup\limits_{s\in\mathbb{R}}
\left\{s: (\exists\mu\in\mathbb{R})~
f(x)-s+\mu h(x)\ge 0,~\forall x\in\mathbb{R}^n\right\}\label{Lag}\\
&=& \sup\limits_{s,\mu\in\mathbb{R}}\left\{s:
\left[\begin{array}{cc}
A+\mu B& a+\mu b\\
a^T+\mu b^T& c+\mu d-s
\end{array}\right]
\succeq0\right\}\label{SDP1}\\
&\le & \inf\limits_{X\in S_+^n}\left\{
\left[\begin{array}{cc}
A & a \\
a^T & c \end{array}\right]\bullet X :
 \left[\begin{array}{cc}
 B&  b\\
 b^T& d
\end{array}\right]\bullet X =0, X_{n+1,n+1}=1
\right\}\label{SDP2}\\
&\le & \inf\limits_{x\in\mathbb{R}^n}\left\{
\left[\begin{array}{cc}
A & a \\
a^T & c \end{array}\right]\bullet
\left(\left[\begin{array}{cc}
x \\
 1 \end{array}\right]\left[\begin{array}{cc}
x \\
 1 \end{array}\right]^T\right):
 \left[\begin{array}{cc}
 B&  b\\
 b^T& d
\end{array}\right]\bullet \left(\left[\begin{array}{cc}
x \\
 1 \end{array}\right]\left[\begin{array}{cc}
x \\
 1 \end{array}\right]^T\right) =0
\right\} \nonumber \\
&=&v({\rm QP1EQC}).\label{SD-E}
\end{eqnarray}
Note that (\ref{Lag}) is equivalent to the Lagrangian dual of
(QP1EQC)
\begin{equation}
({\rm LD})~~~\sup_{\mu\in\mathbb{R}}\left\{ \inf\limits_{x\in\mathbb{R}^n} ~ L(x,\mu):=f(x)+ \mu h(x)
\right\}.\label{ld}
\end{equation}
The equation (\ref{SDP1}) is the SDP reformulation of (\ref{ld})
which is known as Shor relaxation scheme \cite{shor}. The inequality
(\ref{SDP2}) follows from the conic weak duality. Eventually, all
inequalities above become equalities and they prove the strong
duality (no duality gap between (QP1EQC) and its Lagrange dual), as
well as the tight SDP relaxation.

The strong duality result (\ref{SD-B})-(\ref{SD-E}) does not rely on
the existence of an optimal solution $x^*$ to (QP1EQC) and thus it
is more general than Theorem \ref{mor}. It is possible that the
strong duality holds like $v({\rm QP1EQC})=v({\rm LD})=-\infty$ for
an unbounded (QP1EQC). When $v({\rm QP1EQC})=-\infty$, the SDP
reformulation (\ref{SDP1}) of the Lagrange dual (LD) is surely
infeasible. The converse is also true. When $v({\rm
QP1EQC})>-\infty$, by the S-lemma with equality, there is some
$\mu\in\mathbb{R}$ such that $f(x)-v({\rm QP1EQC})+\mu
h(x)\ge0,~\forall x\in \mathbb{R}^n$. The dual must be feasible.


Moreover, when $v({\rm QP1EQC})>-\infty$, due to the tight SDP
relaxation (\ref{SDP2}), an optimal solution $x^*$ of (QP1EQC) can
be found if, and only if the primal SDP relaxation (\ref{SDP2})
attains the optimal solution at some $X^*\in S_+^n$. Then we
can employ the standard rank-one decomposition procedure \cite{sz}
to generate a rank-one solution out of $X^*$ for (QP1EQC). Notice
that the strong duality (\ref{SD-B})-(\ref{SD-E}) does not warrant
(QP1EQC) and its primal SDP relaxation (\ref{SDP2}) an attainable
optimal solution though.

We will show in Theorem \ref{thm:qp} that, when $v({\rm
QP1EQC})=v({\rm LD})>-\infty$, the dual SDP (\ref{SDP1}) is not only
feasible but the value $v({\rm LD})$ is always attainable. Moreover,
the primal problem (QP1EQC) is unattainable if and only if its dual
feasible set is a single point set $\{(v({\rm QP1EQC}),\mu^*)\}$ at
which either (\ref{soluset1}) or (\ref{soluset2}) happens. We first
prove a lemma for the dual attainment property.

\begin{lem}\label{pr:2}
Under Assumption \ref{as1} and $B\neq 0$, if (QP1EQC) has an optimal
solution $x^*$, then the dual SDP (\ref{SDP1}) also has an optimal
solution $(s^*,\mu^*)$ such that the primal-dual pair $(x^*,\mu^*)$
satisfies the Kuhn-Tucker condition (\ref{kt}).
\end{lem}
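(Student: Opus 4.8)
The plan is to read off the result from the strong duality chain (\ref{SD-B})--(\ref{SD-E}), which is already in force for the present case: since $B\neq 0$, the exceptional configuration in Theorem \ref{thm:m} ($A$ with exactly one negative eigenvalue, $B=0$, $b\neq0$, and (\ref{m})) cannot occur, so (${\rm E_1}$)$\sim$(${\rm E_2}$) holds and in particular $v({\rm QP1EQC})=v({\rm LD})$.

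First I would set $s^{*}=v({\rm QP1EQC})=f(x^{*})$, which is finite because $x^{*}$ is an optimal solution. I would then apply the S-lemma with equality to the pair $(f-s^{*},\,h)$; note the leading matrix of $f-s^{*}$ is still $A$ and the constraint function is unchanged, so the hypothesis $B\neq0$ again keeps us outside the exceptional case of Theorem \ref{thm:m}. The validity of (${\rm E_1}$) for this pair, namely $\inf_{h(x)=0}\{f(x)-s^{*}\}=0\ge 0$, therefore yields a multiplier $\mu^{*}\in\mathbb{R}$ with
\[
f(x)+\mu^{*}h(x)-s^{*}\ge 0,\qquad \forall\, x\in\mathbb{R}^{n}.
\]
Homogenizing this nonnegativity statement gives exactly the linear matrix inequality appearing in (\ref{SDP1}), so $(s^{*},\mu^{*})$ is feasible for the dual SDP. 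Because its objective value $s^{*}$ equals the common primal/dual value $v({\rm LD})$, this feasible point attains the dual optimum; hence $(s^{*},\mu^{*})$ is an optimal solution of (\ref{SDP1}).

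Next I would verify the Kuhn--Tucker condition. Writing $g(x)=f(x)+\mu^{*}h(x)-s^{*}$, the previous step gives $g(x)\ge 0$ for all $x$, while the feasibility $h(x^{*})=0$ together with $f(x^{*})=s^{*}$ gives $g(x^{*})=0$. Thus $x^{*}$ is a global unconstrained minimizer of the quadratic $g$, so its gradient vanishes:
\[
\nabla g(x^{*})=2(A+\mu^{*}B)x^{*}+2(a+\mu^{*}b)=0,
\]
which is precisely $Ax^{*}+a+\mu^{*}(Bx^{*}+b)=0$, i.e. (\ref{kt}). This establishes that the primal--dual pair $(x^{*},\mu^{*})$ satisfies the stated optimality condition.

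I do not anticipate a serious obstacle, since the analytic core is supplied by Theorem \ref{thm:m}. The only points demanding care are (i) confirming that the application of the S-lemma to the shifted pair $(f-s^{*},h)$ remains outside the exceptional case, which is guaranteed throughout by $B\neq0$, and (ii) upgrading the merely \emph{feasible} multiplier $\mu^{*}$ produced by the lemma to a \emph{dual-optimal} one, which follows at once from matching its objective value $s^{*}$ to $v({\rm LD})$ via the already-established strong duality.
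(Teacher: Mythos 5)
Your proof is correct, but it takes a genuinely different route from the paper's. The paper extracts the multiplier from Mor\'{e}'s saddle-point result (Theorem \ref{mor}, quoted from \cite{M93}): under Assumption \ref{as1} and $B\neq0$, global optimality of $x^*$ directly supplies a $\mu^*$ with $A+\mu^*B\succeq0$ and the stationarity condition (\ref{kt}), so that $x^*=\arg\min\{f(x)+\mu^*h(x)\}$ and $(s^*,\mu^*)$ with $s^*=f(x^*)$ is dual feasible; dual optimality is then settled by an elementary weak-duality comparison --- any feasible $(s,\mu)$ satisfies $s\le\inf_x\{f(x)+\mu h(x)\}\le f(x^*)+\mu h(x^*)=s^*$ --- with no appeal to the strong duality chain. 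You instead obtain $\mu^*$ by applying Theorem \ref{thm:m} to the shifted pair $(f-s^*,h)$, which is legitimate since $B\neq0$ keeps every such shift outside the exceptional case, and you then recover (\ref{kt}) a posteriori from the unconstrained global minimality of $x^*$ for $g=f+\mu^*h-s^*$ (note $g\ge0$ everywhere and $g(x^*)=0$). Your dual-optimality step leans on the previously established chain (\ref{SD-B})--(\ref{SD-E}); since that chain precedes the lemma and does not depend on it, there is no circularity, though the paper's pointwise evaluation at $x=x^*$ would let you dispense with it entirely. What your route buys is self-containedness within the paper's own machinery --- the lemma follows from the S-lemma with equality alone, with no reliance on the external result \cite{M93}; what the paper's route buys is that the Kuhn--Tucker condition arrives packaged with the multiplier rather than being re-derived, and the optimality argument reduces to pure weak duality.
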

 \begin{proof}
Let $x^*$ be an optimal solution of (QP1EQC). According to Theorem
\ref{mor}, there is a $\mu^*$ such that $A+\mu^* B\succeq0$,
$a+\mu^* b\in \mathcal{R}(A+\mu^* B)$ and thus
\[
x^*={\rm arg}\min f(x)+\mu^*h(x).
\]
Let $s^*=f(x^*)+\mu^*h(x^*)=f(x^*)$. Since $s^*\le
f(x)+\mu^*h(x),~\forall x\in\mathbb{R}^n$, the pair $(s^*,\mu^*)$ is
dual feasible to (\ref{SDP1}) (or to (\ref{Lag})). Suppose $(s,\mu)$
is any dual feasible pair such that $s\le f(x)+\mu h(x),~\forall
x\in\mathbb{R}^n$. There must also be $A+\mu B\succeq0,~a+\mu b\in
\mathcal{R}(A+\mu B)$ and $x(\mu)=-(A+\mu B)^+(a+\mu b)$ such that
\begin{eqnarray*}
s&&\le \inf\{f(x)+\mu h(x): x\in\mathbb{R}^n\}\nonumber\\
&&=f(x(\mu))+\mu h(x(\mu))\nonumber\\
&&\le f(x^*)+\mu h(x^*)\nonumber\\
&&= f(x^*)\nonumber\\
&&=s^*.
\end{eqnarray*}
Therefore, the dual SDP (\ref{SDP1}) has an optimal solution
$(s^*,\mu^*)$ and the primal-dual pair $(x^*,\mu^*)$ satisfies the
Kuhn-Tucker condition (\ref{kt}).
\end{proof}

\begin{thm} \label{thm:qp}
Under Assumption \ref{as1}, $B\neq 0$ and $v({\rm QP1EQC})>-\infty$,
the dual SDP (\ref{SDP1}) always has an optimal solution
$(s^*,\mu^*)$. Moreover, the infimum of (QP1EQC) is unattainable
when, and only when the dual SDP (\ref{SDP1}) possesses a unique
feasible $\mu^*$; and at $\mu^*$ the two functions $f(x),~h(x)$
together satisfy
\begin{eqnarray}
&{\rm either}&V^TBV\succeq0,~h(y_0)-(By_0+b)^TV(V^TBV)^+V^T(By_0+b)>0,\label{soluset1} \\
&{\rm or}&V^TBV\preceq0,~h(y_0)-(By_0+b)^TV(V^TBV)^+V^T(By_0+b)<0,\label{soluset2}
\end{eqnarray}
where $y_0=-(A+\mu^* B)^+(a+\mu^* b)$, $V$ is the matrix basis  of
$\mathcal{N}(A+\mu^* B)$.
\end{thm}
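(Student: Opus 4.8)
The plan is to analyze the concave Lagrangian dual function $g(\mu):=\inf_{x\in\Bbb R^n}[f(x)+\mu h(x)]$, whose supremum equals $v({\rm QP1EQC})$ by the strong duality already established in \eqref{SD-B}--\eqref{SD-E} (recall \eqref{Lag} is the Lagrangian dual). Since each $f(x)+\mu h(x)$ is affine in $\mu$, the function $g$ is concave and upper semicontinuous (an infimum of continuous functions of $\mu$), and it is finite exactly on the dual-feasible set $\{\mu: A+\mu B\succeq 0,\ a+\mu b\in\mathcal{R}(A+\mu B)\}$, on which $g(\mu)=c+\mu d-(a+\mu b)^T(A+\mu B)^+(a+\mu b)$; for such $\mu$ the pair $(g(\mu),\mu)$ is feasible for \eqref{SDP1}, and $(s^*,\mu)$ is optimal iff $s^*=g(\mu)=v({\rm LD})$.

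For the dual attainment I would first observe that Assumption \ref{as1} forces $g(\mu)\to-\infty$ as $|\mu|\to\infty$: fixing $x',x''$ with $h(x')<0<h(x'')$ gives $g(\mu)\le f(x'')+\mu h(x'')\to-\infty$ as $\mu\to+\infty$ and $g(\mu)\le f(x')+\mu h(x')\to-\infty$ as $\mu\to-\infty$. Since $v({\rm LD})=\sup_\mu g(\mu)$ is finite (it is $>-\infty$ by hypothesis and $<+\infty$ because $\{x:h(x)=0\}\neq\emptyset$), every maximizing sequence $\mu_k$ is eventually bounded; passing to a convergent subsequence $\mu_k\to\mu^*$ and invoking upper semicontinuity yields $g(\mu^*)\ge\limsup_k g(\mu_k)=v({\rm LD})$, so $\mu^*$ is a dual maximizer and $(s^*,\mu^*)=(v({\rm LD}),\mu^*)$ solves \eqref{SDP1}.

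For the characterization of primal attainment I would use strong duality to show that $x^*$ is optimal for (QP1EQC) if and only if $h(x^*)=0$ and $x^*$ minimizes $f+\mu h$ for some (equivalently every) dual-optimal $\mu$; indeed $v({\rm LD})=g(\mu)\le f(x^*)+\mu h(x^*)=f(x^*)=v({\rm QP1EQC})$ forces equality throughout. Thus the primal optimal set is exactly the feasible points of the minimizer set $\mathcal{X}(\mu)=\{x:(A+\mu B)x=-(a+\mu b)\}$. Writing $D=\{\mu:g(\mu)=v({\rm LD})\}$ (a closed interval), I split into two cases. If $D$ is not a singleton, choose an interior point $\mu^\circ$; then for any minimizer $x^\circ\in\mathcal{X}(\mu^\circ)$ and any $\mu\in D$ one has $v({\rm LD})=g(\mu)\le f(x^\circ)+\mu h(x^\circ)=v({\rm LD})+(\mu-\mu^\circ)h(x^\circ)$, and letting $\mu$ approach $\mu^\circ$ from both sides forces $h(x^\circ)=0$, so every such $x^\circ$ is primal optimal and the infimum is attained. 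Hence unattainability can occur only when $D=\{\mu^*\}$ is a singleton, in which case attainment is equivalent to the existence of a zero of $h$ on $\mathcal{X}(\mu^*)=\{y_0+Vy:y\in\Bbb R^{n-1}\}$, on which $h(y_0+Vy)=y^T(V^TBV)y+2(V^T(By_0+b))^Ty+h(y_0)$.

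The main work—and the step I expect to be delicate—is the final reduction: $q(y):=h(y_0+Vy)$ has no zero exactly when it is strictly one-signed, and this must be matched against \eqref{soluset1}--\eqref{soluset2} by cases on $V^TBV$. If $V^TBV$ is indefinite, or if it is semidefinite but $V^T(By_0+b)\notin\mathcal{R}(V^TBV)$, then $q$ is unbounded above and below (or affine and nonconstant), hence vanishes somewhere and the infimum is attained. In the remaining cases $q$ is bounded on one side with extremal value $h(y_0)-(By_0+b)^TV(V^TBV)^+V^T(By_0+b)$, so ``$q>0$ everywhere'' is precisely $V^TBV\succeq0$ with this quantity positive, i.e.\ \eqref{soluset1}, and ``$q<0$ everywhere'' is precisely \eqref{soluset2}. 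Combining the two cases shows the infimum is unattainable exactly when $\mu^*$ is the unique dual optimum and \eqref{soluset1} or \eqref{soluset2} holds. The care needed lies entirely in bookkeeping the degenerate subcases (non-attained inf/sup, $V^TBV=0$, and the range condition on $V^T(By_0+b)$) so that the pseudoinverse expression genuinely represents the minimum, respectively maximum, of $h$ over $\mathcal{X}(\mu^*)$.
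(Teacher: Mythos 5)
Your dual-attainment argument (coercivity of $g(\mu)=\inf_x\{f(x)+\mu h(x)\}$ at $\pm\infty$ forced by Assumption \ref{as1}, plus upper semicontinuity of an infimum of affine functions) is correct and is genuinely simpler than the paper's route, which obtains dual attainment either directly when the pencil $I_{\succeq}(A,B)=\{\mu: A+\mu B\succeq 0\}$ is a singleton, or otherwise by first proving \emph{primal} attainment and invoking Lemma \ref{pr:2}. Your complementary-slackness description of the primal optimal set, the interior-point argument showing that a nondegenerate dual optimal set $D$ forces $h(x^\circ)=0$, and (modulo the degenerate subcases you flag) the translation of ``$h$ has no zero on $y_0+\mathcal{R}(V)$'' into \eqref{soluset1}--\eqref{soluset2} are all sound.

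There is, however, a genuine gap in the ``only when'' direction. Your final equivalence reads ``unattainable iff the dual \emph{optimal} $\mu^*$ is unique and \eqref{soluset1} or \eqref{soluset2} holds,'' whereas the theorem asserts uniqueness of the dual \emph{feasible} $\mu^*$, i.e.\ that $F=\{\mu: A+\mu B\succeq 0,\ a+\mu b\in\mathcal{R}(A+\mu B)\}$ is a singleton. Since $D\subseteq F$, your characterization is a priori weaker: you never exclude the configuration in which $D=\{\mu^*\}$ is a singleton, $h$ has no zero on $\mathcal{X}(\mu^*)$, and yet $F$ is a nondegenerate interval. Closing this requires proving that a nondegenerate $F$ forces primal attainment even when the unique maximizer $\mu^*$ sits at an \emph{endpoint} of $I_{\succeq}(A,B)$ --- the hard case in which $\mathcal{N}(A+\mu^* B)$ strictly contains $\mathcal{N}(A)\cap\mathcal{N}(B)$, the rank of $A+\mu B$ jumps, $x(\mu)=-(A+\mu B)^+(a+\mu b)$ can diverge as $\mu\to\mu^*$, and your supergradient/continuity reasoning breaks down. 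This endpoint case is precisely where the paper expends most of its effort: constancy of $\mathcal{N}(A+\mu B)=\mathcal{N}(A)\cap\mathcal{N}(B)$ on the interior of the pencil interval (citing \cite{H}), strict feasibility (\ref{UV}) of the reduced dual SDP (\ref{SDP1:eq}), conic strong duality making the dual problem (UD) attained, assembly of an optimal $X^*$ for the primal SDP (\ref{SDP2}), and a rank-one decomposition yielding an optimal $x^*$. Note also that even your interior-of-$F$ subcase silently needs the constant-null-space fact to get continuity of $x(\mu)$ through $\mu^*$. As written, you have proved a correct but different characterization (in terms of the dual optimal set); the theorem as stated does not yet follow from it.
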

\begin{proof}
Since $v({\rm QP1EQC})>-\infty$, there is some $\mu\in\mathbb{R}$
such that $f(x)-v({\rm QP1EQC})+\mu h(x)\ge0,~\forall x\in
\mathbb{R}^n$. It is clear that such an $\mu$ satisfies
\begin{equation*}
\left\{
\begin{array}{l}
A+\mu B \succeq 0,\\
a+\mu b\in \mathcal{R}(A+\mu B).
\end{array}\right.
\end{equation*}

We first consider that the matrix pencil $I_{\succeq}(A,B)=\{\mu: A
+\mu B\succeq0\}$ is a single-point set $\{\mu^*\}$. Then, any dual
feasible $(s,\mu^*)$ must satisfy
$$s\le\inf\{f(x)+\mu^* h(x): x\in\mathbb{R}^n\}=f(x(\mu^*))+\mu^* h(x(\mu^*))$$
where $x(\mu^*)=-(A+\mu^* B)^+(a+\mu^* b)$. Then, $(s^*,\mu^*)$ with
$s^*=f(x(\mu^*))+\mu^* h(x(\mu^*))$ is the dual optimal solution.
Obviously, $s^*=v({\rm QP1EQC})$. Moreover, when
$I_{\succeq}(A,B)=\{\mu^*\}$, all the Kuhn-Tucker points of
(\ref{kt}) can be completely specified by
\[x^*(y)=-(A+\mu^* B)^+(a+\mu^* b)+Vy=y_0+Vy,~\forall y.
 \]
Observe that
\[h(x^*(y))=h(y_0+Vy)=y^T(V^TBV)y+2(y_0^TBV+b^TV)y+h(y_0).
 \]
In case of (\ref{soluset1}), $h(x)$ restricted on the set of
Kuhn-Tucker points is convex and
 \[
\min_{y} h(x^*(y))=h(y_0)-(By_0+b)^TV(V^TBV)^+V^T(By_0+b)>0.
 \]
It indicates that the quadratic equation $h\left(x^*(y)\right)=0$
has no solution. By Theorem \ref{mor}, (QP1EQC) cannot have an
optimal solution. Since $v({\rm QP1EQC})>-\infty$, it is
unattainable. The other case (\ref{soluset2}) can be analogously
argued.

Next, we show that, if $v({\rm QP1EQC})>-\infty$ and
$I_{\succeq}(A,B)$ is not a single-point set, an optimal solution to
(QP1EQC) can be constructed.  Then, by Lemma \ref{pr:2}, we can
conclude that the dual SDP (\ref{SDP1}) is always attained when
$v({\rm QP1EQC})>-\infty$. First, by an argument in the proof of
Theorem 5.1 in \cite{M93}, $I_{\succeq}(A,B)$ is an interval with an
interior point. Denote by
\[
I_{\succeq}(A,B)=[\mu_{\min}, \mu_{\max}],~~\mu_{\min}< \mu_{\max}
 \]
whereas it is possible that $\mu_{\min}=-\infty$ and
$\mu_{\max}=+\infty$. Since $I_{\succeq}(A,B)$ is an interval with a
non-empty interior, by Theorem 3 (b) in \cite{H}, we have
 \[
 \mathcal{N}(A+\mu B)=\mathcal{N}(A)\cap \mathcal{N}(B),
 ~\forall \mu\in (\mu_{\min}, \mu_{\max}).
 \]
Let $V\in \Bbb R^{n\times r}$ be the basis matrix of
$\mathcal{N}(A)\cap \mathcal{N}(B)$ and $U\in\Bbb R^{n\times (n-r)}$
be the orthonormal complementary subspace of $V$. Then we have
\[
\left[\begin{array}{c}U^T\\V^T\end{array}\right](A+\mu B)\left[U~V\right]
=\left[\begin{array}{cc}U^TAU+\mu U^T BU& 0\\0&0\end{array}\right],
~\forall \mu\in (\mu_{\min}, \mu_{\max}).
\]
Let $u\in \Bbb R^{n-r}$. By $A+\mu B\succeq0$, $(u^TU^T)(A+\mu B)(Uu)=0$
if and only if $(A+\mu B)(Uu)=0$. Since $U$ is the orthogonal
complement of $V$, it must be $u=0$. In other words.
\begin{equation}
U^TAU+\mu U^TBU \succ 0, ~\forall \mu\in (\mu_{\min}, \mu_{\max}).
\label{UV}
 \end{equation}
With the $[U~ V]_{n\times n}$ coordinate change and the notation
$0_{m\times r}$ for the $m\times r$ zero matrix; $0_{n}$ for the
$n-$dimensional zero vector, we can recast the dual SDP (\ref{SDP1})
as
\begin{eqnarray}
&&\sup\left\{s\in\mathbb{R}:
\left[\begin{array}{cc}
U^T& 0_{n-r}\\
V^T & 0_{ r}\\
0^T_{n}& 1
\end{array}\right]\left[\begin{array}{cc}
A+\mu B& a+\mu b\\
a^T+\mu b^T& c+\mu d-s
\end{array}\right]\left[\begin{array}{ccc}
U & V& 0_{n}\\
0^T_{n-r} & 0^T_{ r}& 1
\end{array}\right]
\succeq0\right\} \nonumber\\
&=&
\sup\left\{s\in\mathbb{R}:
\left[\begin{array}{ccc}
U^TAU+\mu U^TBU& 0_{(n-r)\times r} &U^T(a+\mu b)\\
0_{r\times (n-r)}&0_{r\times r} & V^T(a+\mu b)\\
(a+\mu b)^TU&(a+\mu b)^TV&  c+\mu d-s
\end{array}\right]
\succeq0\right\} \nonumber\\
&=&
\sup\left\{s\in\mathbb{R}:
V^T(a+\mu b)=0,
\left[\begin{array}{cc}
U^TAU+\mu U^TBU&U^T(a+\mu b)\\
(a+\mu b)^TU&   c+\mu d-s
\end{array}\right]
\succeq0\right\}. \label{SDP1:eq}
\end{eqnarray}

Since $v({\rm QP1EQC})>-\infty$, the dual SDP (\ref{SDP1}) is
feasible. By (\ref{UV}), it implies that (\ref{SDP1:eq}) admits a
strict feasible point $(\mu,s)$ that satisfies the positive
semi-definite constraint. By writing down the conic dual of
(\ref{SDP1:eq}):
\begin{eqnarray}
({\rm UD})~~&\inf\limits_{Y,z}&
\left[\begin{array}{cc}
U^TAU & U^Ta \\
a^TU & c \end{array}\right]\bullet Y+a^TVz \nonumber\\
&{\rm s.t.}& \left[\begin{array}{cc}
 U^TBU&  U^Tb\\
 b^TU& d
\end{array}\right]\bullet Y+b^TVz =0,\nonumber\\
&& Y_{n-r+1,n-r+1}=1,~Y\in S_+^{n-r+1},\nonumber
\end{eqnarray}
and by the strong duality theorem, there must be
$$v({\rm QP1EQC})=v({\rm LD})=v({\rm Prob}(\ref{SDP1:eq}))=v({\rm UD})>-\infty.$$
In particular, (UD) can be attained at an optimal
solution, say $(Y^*,z^*)$. Let
$$Y^*=\left[\begin{array}{cc}
Y^*_{(n-r)\times (n-r)}& Y^*_{\{1:n-r\},n-r+1}\\
Y^{*T}_{\{1:n-r\},n-r+1} & Y^*_{n-r+1,n-r+1}
\end{array}\right]$$
where
$Y^*_{\{1:n-r\},n-r+1}=(Y^*_{1,n-r+1},Y^*_{2,n-r+1},\cdots,Y^*_{n-r,n-r+1})^T$.
Then, define
\[
X^*=\left[\begin{array}{ccc}
U & V& 0_{n}\\
0^T_{n-r} & 0^T_{ r}& 1
\end{array}\right]\left[\begin{array}{ccc}
Y^*_{(n-r)\times (n-r)} & 0_{(n-r)\times r}  & Y^*_{\{1:n-r\},n-r+1}  \\
0_{r\times (n-r)}&\frac{1}{4}z^*z^{*T} &\frac{1}{2}z^*\\
Y^{*T}_{\{1:n-r\},n-r+1} & \frac{1}{2}z^{*T} &1 \end{array}\right]\left[\begin{array}{cc}
U^T& 0_{n-r}\\
V^T & 0_{ r}\\
0^T_{n}& 1
\end{array}\right]\succeq0.
\]
We can verify that $$\left[\begin{array}{cc}
 B&  b\\
 b^T& d
\end{array}\right]\bullet X^* =0,~~X^*_{n+1,n+1}=1,$$
and
$$\left[\begin{array}{cc}
A & a \\
a^T & c \end{array}\right]\bullet X^*=\left[\begin{array}{cc}
U^TAU & U^Ta \\
a^TU & c \end{array}\right]\bullet Y^*+a^TVz^*=v({\rm UD})=v({\rm
QP1EQC}).$$ In other words, $X^*$ is an optimal solution of the
primal SDP (\ref{SDP2}). Employing the standard rank-one
decomposition procedure \cite{sz}, we have shown that (QP1EQC) is
attained. The proof is complete.
\end{proof}

The following example is provided to illustrate the idea of Theorem
\ref{thm:qp}.
\begin{exam}
Consider
\begin{eqnarray*}
 &\inf& x_1^2\\
&{\rm s.t.} & x_1x_2-1=0,
\end{eqnarray*}
where $a=b=(0,0)^T$, $c=0$, $d=-1$ and
\begin{equation*}
A=\left[\begin{array}{cc}
1 & 0 \\
0  & 0\end{array}\right],~B=\left[\begin{array}{cc}
0 & 0.5 \\
0.5  & 0\end{array}\right].
\end{equation*}
The graph of the objective function is a paraboloid by extending the
parabola $z=x_1^2$ parallel along the $x_2$ direction. The
constraint is a hyperbola upon which there are a pair of traces on
$z=x_1^2$, both asymptotically approaching 0 from the positive
territory. Then, $v({\rm QP1EQC})=0$ but is unattainable.

The primal SDP (\ref{SDP2}) is
\begin{eqnarray*}
 &\inf& \left[\begin{array}{ccc}
1 & 0 & 0 \\
0&0 & 0\\
0&0&0\end{array}\right]\bullet X \\
&{\rm s.t.} &
\left[\begin{array}{ccc}
0 & 0.5 & 0 \\
0.5&0 & 0\\
0&0&-1\end{array}\right]\bullet X =0,\\
&& X_{3,3}=1, ~X\succeq0.
\end{eqnarray*}
Define
\[
X(\epsilon)=\left[\begin{array}{ccc}
\epsilon& 1 & 0 \\
1&\frac{1}{\epsilon} & 0\\
0&0&1\end{array}\right].
\]
Then $X(\epsilon)$ is a feasible solution of the primal SDP
(\ref{SDP2}) for any $\epsilon>0$. The corresponding objective value
is $\epsilon$. Since $\lim_{\epsilon\rightarrow 0}X(\epsilon)$ does
not have a limit, the optimal value of the primal SDP is $0$, but
unattainable.

The  dual SDP (\ref{SDP1}) is
\begin{eqnarray*}
 &\sup&s\\
&{\rm s.t.} &  \left[\begin{array}{ccc}
1 & 0.5\mu & 0 \\
0.5\mu&0 & 0\\
0&0&-\mu-s \end{array}\right]\succeq0.
\end{eqnarray*}
Obviously, $\mu^*=0$ is the only feasible dual solution, which
forces $s\le0$. Then, the dual optimal value is $0$, confirming the
strong duality; and the dual optimal solution is
$(s^*,\mu^*)=(0,0)$, confirming that the dual must be attainable if
the problem is bounded from below.

Moreover,  we can verify that
\[
\mathcal{N}(A+\mu^* B)=\mathcal{N}(A)={\rm span}\{(0,1)^T\}
 \]
such that $V=(0,1)^T$. A direct computation shows
\[
 y_0=-(A+\mu^* B)^+(a+\mu^* b)=(0,0)^T;~V^TBV=0,
 \]
and hence
 \[
h(y_0+Vy)=h(y_0)-(By_0+b)^TV(V^TBV)^+V^T(By_0+b)=-1,
 \]
which confirms (\ref{soluset2}) that none of the Kuhn-Tucker points
are feasible.
\end{exam}

Finally, we consider the interval bounded generalized trust region subproblem (GTRS),
which is to deal with $\inf\{f(x):  l\le
h(x)\le u,~x\in\mathbb{R}^n\}.$ Jeyakumar et al. \cite{J09b}
studied (GTRS) using Polyak's result \cite{Poly} (see also Theorem
\ref{thm:poly} in Section 5). They assumed that $h$ is homogeneous
and strictly convex. Pong and Wolkowicz \cite{Pong} proved the
strong duality based on the the following assumptions:
\begin{itemize}
\item[1.] $B \neq 0$.
\item[2.] (GTRS)  is feasible.
\item[3.] The following relative interior constraint qualification holds
\[
{\rm(RICQ)}~  l< B\bullet\widehat{X}+2b^T\widehat{x}+d<u,~ {\rm for~ some}~ \widehat{X} \succ \widehat{x}\widehat{x}^T.
\]
\item[4.] (GTRS) is bounded below.
\item[5.] The dual problem of (GTRS) is feasible.
\end{itemize}
Very recently, Wang and Xia \cite{Shu} showed that Assumption 3
above in \cite{Pong} can be equivalently rephrased as a simple
``strict feasibility'' assumption that there exists an $\overline{x}\in
\mathbb{R}^n$ such that $l< h(\overline{x})< u.$ Hence Assumption 2
in \cite{Pong} is redundant. In addition, Assumption 4 naturally
implies Assumption 5 by a similar result from Theorem \ref{thm:qp}
in this section. This has also been done in \cite{Shu} by Wang and
Xia. Indeed, based on the S-lemma with equality from an earlier
arXiv version of this paper, they established the S-lemma with
interval bounds. Under the strict feasibility assumption, the
following two statements are equivalent ((${\rm I_1}$)$\sim$(${\rm
I_2}$)):
\begin{itemize}
\item[(${\rm I_1}$)] $l \leq h(x)\leq u~\Longrightarrow~ f(x)\ge 0,
~\forall x\in\Bbb R^n.$ 
\item[(${\rm I_2}$)] ($\exists~\mu\in \mathbb{R}$)~
$f(x) + \mu_{-} (h(x)-u)+\mu_{+}(l-h(x))\ge0, ~\forall x\in \mathbb{R}^n$ where $\mu_{+}=\max\{\mu,0\},\mu_{-}=-\min\{\mu,0\}.$ 
\end{itemize}
except for the special case where $A$ has exactly one negative
eigenvalue, $B=0$, $b\neq 0$ and there exists a $\nu\geq0$ such that
\begin{equation}
\left[\begin{array}{ccc}
V^TAV                  & \frac{1}{2b^Tb}V^TAb          & V^Ta\\
\frac{1}{2b^Tb}b^TAV   & \frac{b^{T}Ab}{(2b^Tb)^2}+\nu &
\frac{a^{T}b}{2b^Tb}-\frac{\nu}{2}(l+u-2d)\\
a^TV & \frac{a^{T}b}{2b^Tb}-\frac{\nu}{2}(l+u-2d) &
c+\nu(l-d)(u-d)
\end{array}\right]\succeq 0, \label{pd}
\end{equation}
with $V\in \Bbb R^{n\times (n-1)}$ being the matrix basis of
$\mathcal{N}(b)$.

In summary, (GTRS) is now completely answered by the S-lemma with
equality. When the strict feasibility holds and $B\not=0$, (GTRS)
has a tight SDP relaxation (the SDP formulation can be read in
\cite{Pong,Shu}). In that case, if $v({\rm GTRS})>-\infty$ and
attainable, an optimal solution can be found by solving the SDP
relaxation followed by a typical rank-one decomposition procedure.
Otherwise, when (GTRS) fails to satisfy the strict feasibility or
has $B=0$, $h(x)$ reduces to be linear. Then, (GTRS) would be either
a convex unconstrained optimization problem with its optimal
solution residing in the interval $l\le h(x)\le u$; or can be
determined by $v({\rm GTRS})=\min\left\{\inf_{h(x)=l}
f(x),~\inf_{h(x)=u} f(x) \right\}.$ Since $h(x)=l$ (or $h(x)=u$) can
be replaced by a system of linear equations, each of both becomes an
unconstrained problem and could either be unbounded below or has an
attainable optimal solution.

%

\section{Application to Convexity of Joint Numerical Range}


Dines in 1941 proved a fundamental but somehow surprising result in
classical mathematical analysis that the joint numerical range
$M=\{(f(x),h(x)): x\in \Bbb R^n\}\subseteq \Bbb R^2$ is convex when
$f$ and $h$ are quadratic forms \cite{Dines}. He explained in the
same paper that the observation on the convexity of $M$ was
motivated by Finsler's theorem (S-lemma) \cite{F} in 1937 because
``{\it it asserts the existence of a supporting line of the map $M$
which has contact with $M$ only at $(0,0)$. This suggests that the
theorem is related to the theory of convex sets.}'' Dines also
described the shape of $M$ to be either the entire $x\text{-}y$ plane or an
angular sector of angle less than $\pi$, provided $f, h$ have no
common zero except $x=0$.

Since then, the progress has been slow. Extension of Dines' result
to the 2D image of nonhomogeneous functions $f$ and $h$ occurred
much later in 1998 due to Polyak \cite{Poly}. It may be stated as

\begin{thm}[\cite{Poly}]\label{thm:poly}
Let $f, h \in \Bbb R^n$ be nonhomogeneous quadratic functions.
Suppose that $n\ge 2$ and there exists $\mu\in\Bbb R^2$ such that
\begin{equation*}
\mu_1A+\mu_2B\succ 0.
\end{equation*}
Then the set $M= \{(f(x), h(x)): x\in \Bbb R^n\}\subseteq \Bbb R^2$
is closed and convex.
\end{thm}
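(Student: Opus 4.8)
The plan is to reduce to the case $A\succ0$ and then describe $M$ explicitly as the planar region trapped between a convex and a concave value function, whose convexity/concavity is exactly the strong duality furnished by the S-lemma with equality (Theorem \ref{thm:m}).

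First I would exploit the positive-definite pencil to normalize. Choosing $\nu\in\mathbb{R}^2$ so that $(\mu,\nu)$ is an invertible $2\times2$ matrix, set $p=\mu_1 f+\mu_2 h$ and $q=\nu_1 f+\nu_2 h$. The correspondence $(f,h)\mapsto(p,q)$ is an invertible linear transformation $L$ of the plane, so $\{(p(x),q(x)):x\in\mathbb{R}^n\}=L(M)$, and both closedness and convexity of $M$ are equivalent to those of $L(M)$. Since the symmetric matrix of $p$ is $\mu_1A+\mu_2B\succ0$, after renaming I may assume from the outset that $A\succ0$. In particular $f$ is strictly convex and coercive, with $f_\ast=\min_x f(x)$ attained at a unique point. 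Closedness is then routine: if $(f(x_k),h(x_k))\to(u,v)$, boundedness of the sequence $f(x_k)$ forces boundedness of $\{x_k\}$ by coercivity, and a convergent subsequence yields $(u,v)\in M$.

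For convexity I would analyze $M$ fibrewise over its first coordinate. Its projection onto the $u$-axis is $[f_\ast,\infty)$, and for $u>f_\ast$ the level set $\{x:f(x)=u\}$ is a nonempty compact ellipsoid, which is connected because $n\ge2$. Hence $h$ maps it onto a compact interval $[\phi(u),\psi(u)]$ where $\phi(u)=\min\{h(x):f(x)=u\}$ and $\psi(u)=\max\{h(x):f(x)=u\}$; together with the degenerate fibre at $u=f_\ast$ this gives $M=\{(u,v):u\ge f_\ast,\ \phi(u)\le v\le\psi(u)\}$, the inclusion ``$\supseteq$'' being the intermediate value theorem applied to $h$ on the connected level set. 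Consequently $M$ is convex precisely when $\phi$ is convex and $\psi$ is concave on $[f_\ast,\infty)$.

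The heart of the argument, and the step I expect to be the main obstacle, is establishing this convexity/concavity, which is exactly where the S-lemma with equality enters. Treating $h$ as objective and $f(x)-u=0$ as constraint, Assumption \ref{as1} holds for every $u>f_\ast$ (since $f-u$ is coercive with negative minimum), and the constraint matrix is $A\succ0\neq0$, so the exceptional case of Theorem \ref{thm:m} never occurs and the S-lemma with equality applies. Applied to the shifted objective $h-s$ it yields $\phi(u)=\sup_{\beta\in\mathbb{R}}\bigl(\min_x[h(x)+\beta f(x)]-\beta u\bigr)$, a pointwise supremum of functions affine in $u$, hence convex; the dual computation with $-h$ gives $\psi(u)=\inf_{\beta\in\mathbb{R}}\bigl(\beta u-\min_x[-h(x)+\beta f(x)]\bigr)$, an infimum of affine functions, hence concave. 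With $\phi$ convex, $\psi$ concave and $\phi\le\psi$, the region above is an intersection of an epigraph, a hypograph and the half-line $\{u\ge f_\ast\}$, so it is convex, completing the proof. The delicate points to verify carefully are the strong-duality/attainment claim for every $u>f_\ast$ (guaranteed because $A\succ0$ supplies the definite pencil $\beta A-B\succ0$ for large $\beta$) and the boundary behaviour at $u=f_\ast$, where $\phi$ and $\psi$ coalesce at $h$ of the minimizer of $f$.
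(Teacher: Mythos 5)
Your argument is correct, but there is nothing in the paper to compare it against: Theorem~\ref{thm:poly} is quoted from Polyak \cite{Poly} as background for Section~5 and is never proved there. What you have written is thus a genuinely independent derivation, and an interesting one, because it reverses the paper's logical flow: the paper applies its S-lemma with equality (Theorem~\ref{thm:m}) to joint ranges only with \emph{affine} $h_i$'s (Theorem~\ref{thm:cv}), whereas you show that Polyak's two-quadratic theorem itself falls out of Theorem~\ref{thm:m}. Your three steps all check out. The normalization to $A\succ0$ via an invertible linear map of the image plane is standard and preserves closedness and convexity; the fibrewise description $M=\{(u,v):u\ge f_\ast,\ \phi(u)\le v\le\psi(u)\}$ is exactly where $n\ge2$ enters (connectedness of the ellipsoidal level sets); and the dual representations $\phi(u)=\sup_\beta\bigl(\min_x[h+\beta f]-\beta u\bigr)$ and $\psi(u)=\inf_\beta\bigl(\beta u-\min_x[-h+\beta f]\bigr)$ are precisely the strong duality for (QP1EQC) that the paper derives from Theorem~\ref{thm:m} in (\ref{SD-B})--(\ref{SD-E}), here with $f-u=0$ as the constraint. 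Your verification that Theorem~\ref{thm:m} applies is right: Assumption~\ref{as1} holds for $f-u$ whenever $u>f_\ast$ by coercivity, and the exceptional case is excluded because the constraint Hessian is $A\succ0\neq0$; moreover you need only equality of optimal values, not dual attainment, and $\phi(u)$ is finite by compactness of the fibre while $\min_x[h+\beta f]$ is finite for large $\beta$ since $B+\beta A\succ0$. The dependence is also non-circular: the paper's proof of Theorem~\ref{thm:m} rests on the homogeneous result of Tuy--Tuan (S-Condition~3) and direct constructions, not on Polyak's theorem. What your route buys is a self-contained proof within the paper's own machinery, nicely echoing Dines' remark (quoted in Section~5) that S-lemma-type statements and convexity of quadratic images are two sides of one coin; Polyak's original proof goes through different machinery that the paper does not reproduce.

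The one point you flag but leave open --- convexity of $\phi$ and concavity of $\psi$ up to the endpoint $u=f_\ast$ --- does need a line, since a function convex on $(f_\ast,\infty)$ extends convexly to $[f_\ast,\infty)$ only if its value at $f_\ast$ dominates the right limit. Here this is automatic: as $u\downarrow f_\ast$ the level sets $\{x:f(x)=u\}$ shrink to the unique minimizer $x^\ast$ by coercivity of $f$, so $\phi(u)\to h(x^\ast)=\phi(f_\ast)$ and $\psi(u)\to h(x^\ast)=\psi(f_\ast)$, exactly the coalescence you anticipated. This is a gap of detail, not of substance.
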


A counterexample was provided in the same paper \cite{Poly} to show
that the joint numerical range $M$ is in general nonconvex for
nonhomogeneous quadratic functions. Dines' theorem even fails when
one of $f$ and $h$ is an affine function.

\begin{exam}[\cite{Poly}]\label{exam:cv}
Consider the following two quadratic functions in $\Bbb R^2$
\[
f(x)=2x_1^2-x_2^2,~h(x)=x_1+x_2.
\]
Then we can verify that
\[
M= \{(f(x), h(x)): x\in \Bbb R^n\}=\{(y_1,y_2)\in\Bbb R^2: y_1\ge -2y_2^2\}
\]
which is nonconvex.
\end{exam}

In 2007, Beck \cite{BE07} studied the convexity of the image of
mappings comprised of a strictly convex quadratic function and a set
of affine functions.
\begin{thm}[\cite{BE07}]\label{thm:beck}
Let $h_{i}(x)=2b_{i}^Tx+d_{i}$ for $i=1,\ldots,p$, where $b_{i}\in
\Bbb R^n, d_{i}\in \Bbb R$. Suppose $A\succ 0$. When $p\le n-1$, the
set
\begin{equation}
S=\{(f(x),h_{1}(x),\ldots,h_{p}(x)):x\in\Bbb R^n\}\subseteq \Bbb R^{p+1}\label{Mset}
\end{equation}
is closed and convex. Besides, when $p=n$ and $b_{1},\ldots,b_n$ are
linearly independent, $S$ is nonconvex.
\end{thm}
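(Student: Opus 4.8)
The plan is to analyze $S$ through the fibers of the affine map $\Phi(x)=(h_1(x),\ldots,h_p(x))$, since the last $p$ coordinates of any point of $S$ are pinned down by $\Phi$ while only the first coordinate carries the quadratic $f$. For the convexity claim ($p\le n-1$), I would take two points $(f(x^{(1)}),\Phi(x^{(1)}))$ and $(f(x^{(2)}),\Phi(x^{(2)}))$ of $S$ together with $\lambda\in[0,1]$, and set $\bar x=\lambda x^{(1)}+(1-\lambda)x^{(2)}$. Because each $h_i$ is affine, $\Phi(\bar x)$ already equals the target $\lambda\Phi(x^{(1)})+(1-\lambda)\Phi(x^{(2)})$, so the last $p$ coordinates are matched for free; the only defect is the first coordinate, and strict convexity of $f$ (from $A\succ0$) gives $f(\bar x)\le t$, where $t:=\lambda f(x^{(1)})+(1-\lambda)f(x^{(2)})$ is the value we must reach.

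The decisive step is then to raise the first coordinate to exactly $t$ without leaving the fiber. Let $L=\{x:\Phi(x)=\Phi(\bar x)\}$; this is a nonempty affine subspace (it contains $\bar x$) of dimension $n-\mathrm{rank}\{b_i\}\ge n-p\ge 1$. Writing $L=\bar x+\mathcal{R}(V)$ with $V$ a basis of the common null space $\{y:b_i^Ty=0,\ \forall i\}$, the restriction $y\mapsto f(\bar x+Vy)$ has Hessian $2V^TAV\succ0$, so it is a strictly convex coercive quadratic that takes every value in $[m,\infty)$ with $m=\min_{x\in L}f(x)$. Since $m\le f(\bar x)\le t$, some $x^*\in L$ satisfies $f(x^*)=t$, whence $(f(x^*),\Phi(x^*))=(t,\lambda\Phi(x^{(1)})+(1-\lambda)\Phi(x^{(2)}))\in S$, proving convexity. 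Closedness would come from the same positivity: $A\succ0$ makes $f$ coercive, so if a sequence of images converges, the $f$-coordinate stays bounded, forcing the preimages into a compact set; a convergent subsequence and continuity of $f,h_i$ then place the limit in $S$.

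For the nonconvexity claim ($p=n$ with $b_1,\ldots,b_n$ linearly independent), the plan is to observe that $\Phi$ is now an \emph{invertible} affine map of $\mathbb{R}^n$, so every fiber $L$ collapses to a single point and the upward correction above becomes impossible. Each target $s\in\mathbb{R}^n$ has a unique preimage $x(s)$, affine in $s$, so $S$ is exactly the graph $\{(g(s),s):s\in\mathbb{R}^n\}$ of $g(s):=f(x(s))$. Since $x(\cdot)$ is an affine bijection and $A\succ0$, $g$ is strictly convex, and the graph of a strictly convex function is never convex: for $s^{(1)}\neq s^{(2)}$ one has $g(\tfrac{s^{(1)}+s^{(2)}}{2})<\tfrac{g(s^{(1)})+g(s^{(2)})}{2}$, so the midpoint of the two graph points lies strictly above $S$ and, being the only point of $S$ over $\tfrac{s^{(1)}+s^{(2)}}{2}$, is absent from $S$.

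The conceptual crux—and the step I expect to need the most care—is the dimension bookkeeping that links the two halves: the entire argument hinges on whether the fiber $L$ has positive dimension. When $p\le n-1$ the fiber is at least a line, $f|_L$ is unbounded above, and the upward correction always succeeds; when $p=n$ the fiber shrinks to a point and the very same mechanism forces $S$ to be a strictly convex graph. Thus the threshold $p=n-1$ versus $p=n$ is not incidental but is precisely the rank condition under which the block $V^TAV$ ceases to exist as a positive-dimensional object, and carefully verifying $\dim L= n-\mathrm{rank}\{b_i\}\ge n-p$ (with $\dim L=0$ exactly in the independent $p=n$ case) is the detail that welds the convex and nonconvex conclusions into a single phenomenon.
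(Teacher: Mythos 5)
Your proof is correct, but it takes a genuinely different route from the paper. The paper does not prove Beck's theorem directly: it first establishes Theorem \ref{thm:cv}, a complete characterization (via the new S-lemma with equality) of exactly when $S$ is convex for an \emph{arbitrary} symmetric $A$, and then derives Beck's theorem by checking that under $A\succ 0$ and $p\le n-1$ one has $V^TAV\succ 0$ and $W^TAW\succeq 0$ (so neither failure case (a) nor (b) occurs, giving convexity), while for $p=n$ with independent $b_i$ one has $\mathrm{rank}(V)=0$, $V^TAV=0$, $W^TAW\succ 0$, triggering case (b) (nonconvexity). Your argument is instead direct and elementary: the affine coordinates are matched for free at $\bar x=\lambda x^{(1)}+(1-\lambda)x^{(2)}$, convexity of $f$ gives $f(\bar x)\le t$, and since the fiber $L$ has dimension $n-\mathrm{rank}(P)\ge 1$ the restriction $f|_L$ is a coercive strictly convex quadratic that sweeps $[m,\infty)$, so an intermediate-value correction hits $t$ exactly; in the $p=n$ case $\Phi$ is an affine bijection and $S$ is the graph of a strictly convex function, hence nonconvex. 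Each approach buys something: the paper's route exhibits Beck's theorem as a special case of a necessary-and-sufficient criterion valid for indefinite $A$, whereas your proof is self-contained, needs no S-lemma machinery, and additionally establishes \emph{closedness} via coercivity of $f$ --- a part of the statement that the paper's derivation from Theorem \ref{thm:cv} (which concerns convexity only) does not address. Note also that your argument only uses $\mathrm{rank}(P)\le n-1$ rather than $p\le n-1$, recovering precisely the improvement the paper points out in the remark following its derivation.
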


It happens that the newly developed S-lemma with equality can be
used to give a theorem on the convexity of jointly numerical range
$S$, sufficiently strong to cover both Example \ref{exam:cv} and
Theorem \ref{thm:beck}. Namely, we are able to say something similar
to what Dines had done in \cite{Dines}, from the S-lemma with
equality ({\it cf.} Finsler's theorem) to the convexity of $S$ ({\it
cf.} $M$).

%

\begin{thm}\label{thm:cv}
Suppose $h_{i}(x)=2b_{i}^Tx+d_{i}$ for $i=1,\ldots,p$, where
$b_{i}\in \Bbb R^n, d_{i}\in \Bbb R$. Let $P=[b_{1}, b_{2}, \ldots,
b_{p}]^T$ and $r={\rm rank}(P)$. Then, the set $S$ defined in
(\ref{Mset}) is convex if and only if neither of the following two
cases occurs:
\begin{itemize}
\item[${\rm(a)}$]$V^TAV\succeq 0$, $V^Ta\in \mathcal{R}(V^TA)$ and $W^TAW$ has a negative eigenvalue;
\item[${\rm(b)}$]$V^TAV \preceq 0$, $V^Ta\in \mathcal{R}(V^TA)$ and $W^TAW$ has a positive eigenvalue,
\end{itemize}
where $V\in \Bbb R^{n\times (n-r)}$ is the matrix basis of
$\mathcal{N}(P)$ and $W\in \Bbb R^{n\times (n-{\rm rank}(V^TA))}$ is
the  matrix basis of $\mathcal{N}(V^TA)$.
\end{thm}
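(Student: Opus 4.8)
The plan is to reduce the convexity of the joint numerical range $S$ to a statement about the S-lemma with equality, and then invoke Theorem~\ref{thm:m} to identify precisely the obstruction to convexity. The starting point is the standard observation that a set in $\mathbb{R}^{p+1}$ is convex if and only if, for every supporting direction, the corresponding lower (or upper) level set behaves correctly; more concretely, $S$ is convex if and only if for every $(\lambda_0,\lambda_1,\ldots,\lambda_p)\in\mathbb{R}^{p+1}$ the one-dimensional image of $\lambda_0 f(x)+\sum_i\lambda_i h_i(x)$ over the fibers is an interval. Since the $h_i$ are affine with $P=[b_1,\ldots,b_p]^T$, the relevant ``fibers'' are the affine subspaces on which all $h_i$ are simultaneously fixed, and these are translates of $\mathcal{N}(P)$. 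The first step I would take is therefore to reduce to the core question: on the affine variety $\{x: h_1(x)=c_1,\ldots,h_p(x)=c_p\}$, understand the range of $f$, and use the matrix basis $V$ of $\mathcal{N}(P)$ to rewrite $f$ restricted to such a fiber as a quadratic in the free variable $y$ (with Hessian $V^TAV$).

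Once the problem is localized to a single affine fiber, the essential dichotomy is governed by whether $S$ fails to be convex, which happens precisely when the image of $f$ over $\mathbb{R}^n$ (with the affine constraints imposed) ``bulges'' in a non-convex way. The plan is to connect this to the S-lemma with equality as follows: $S$ being convex is equivalent to saying that for each direction, a separating hyperplane exists, which is exactly an (${\rm E_2}$)-type statement $f(x)+\sum_i\mu_i h_i(x)\ge 0$ being derivable from the corresponding (${\rm E_1}$)-type unsolvability statement. I would package the affine functions into a single equality constraint (or handle them via the null-space reduction already used throughout the paper) and apply Theorem~\ref{thm:m}, whose unique exceptional case is ``$A$ has exactly one negative eigenvalue, $B=0$, $b\neq0$, and the matrix in (\ref{m}) is positive semidefinite.'' The exceptional case translates, in the present multi-constraint affine setting, into a condition on the signature of $f$ split across the fiber direction $V$ and its complement.

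The heart of the proof is matching the exceptional configuration of Theorem~\ref{thm:m} to cases (a) and (b) of the statement. I would argue that non-convexity of $S$ arises exactly when $f$ is semidefinite (of one sign) along the fiber directions $\mathcal{N}(P)$ but attains the opposite sign on the complementary directions; this is the geometric picture of a ``flat'' convex behavior on the constraint set combined with concavity transverse to it, which is the single obstruction the S-lemma with equality cannot overcome. Concretely, after restricting to the fiber I introduce $V^TAV$ as the reduced Hessian; when $V^TAV\succeq 0$ the restricted $f$ is convex, and the remaining freedom (quantified by $W$, the null space of $V^TA$, where $f$ has no forced curvature contribution from the fiber) is where a negative eigenvalue of $W^TAW$ produces the non-convex bulge. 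The condition $V^Ta\in\mathcal{R}(V^TA)$ is the solvability/consistency requirement ensuring the linear part of the restricted $f$ does not dominate and force unboundedness (it is the analogue of the range condition $a+\mu b\in\mathcal{R}(A+\mu B)$ appearing elsewhere), and I would verify it arises naturally when translating (\ref{m}) to the multi-affine case. Case (b) is the mirror image obtained by replacing $f$ with $-f$.

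\textbf{The main obstacle} I anticipate is the bookkeeping in passing from the single-equality formulation of Theorem~\ref{thm:m} to the genuinely multi-affine setting with $p$ constraints of rank $r$. The theorem as stated handles one quadratic equality $h(x)=0$, whereas here I must simultaneously control $p$ affine constraints; the reduction requires carefully choosing supporting directions $(\lambda_1,\ldots,\lambda_p)$ and showing that the worst-case non-convexity is always detected by a single aggregated affine constraint, so that Theorem~\ref{thm:m} applies fiber-by-fiber. Establishing that the two-layer null-space decomposition ($V$ for $\mathcal{N}(P)$, then $W$ for $\mathcal{N}(V^TA)$) correctly captures the exceptional matrix (\ref{m}) — in particular that the ``exactly one negative eigenvalue'' condition becomes ``$V^TAV\succeq 0$ with $W^TAW$ indefinite in the prohibited direction'' — is the delicate algebraic step, and I would verify it by direct congruence computation and by checking the theorem against both Example~\ref{exam:cv} and Theorem~\ref{thm:beck} as consistency tests.
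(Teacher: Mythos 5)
There is a genuine gap, and it sits at your very first reduction. The opening equivalence---that $S$ is convex iff every one-dimensional image of $\lambda_0 f+\sum_i\lambda_i h_i$ is an interval, so that the problem can be ``localized to a single affine fiber''---is false: every linear image of an annulus is an interval, yet the annulus is not convex, and the image of $f$ over any single fiber $\{x: h_i(x)=c_i,\ i=1,\ldots,p\}$ is automatically an interval (continuous image of a connected set), so no fiber-by-fiber analysis can detect non-convexity. Likewise, your separating-hyperplane characterization would at best capture the closed convex hull, and $S$ need not be closed here. The paper instead runs a segment test between two arbitrary points $u=(f(x_u),h_1(x_u),\ldots,h_p(x_u))$ and $v=(f(x_v),h_1(x_v),\ldots,h_p(x_v))$ of $S$: the affine constraints force any preimage of $(1-\lambda)u+\lambda v$ to have the form $(1-\lambda)x_u+\lambda x_v+Vy$, and equating $f$ there with $(1-\lambda)f(x_u)+\lambda f(x_v)$ produces a single quadratic equation $g(\lambda,y)=0$ in the lifted variables $(\lambda,y)$ that couples the cross-fiber parameter $\lambda$ with the in-fiber variable $y$; non-convexity of $S$ is exactly unsolvability of the two-function system (\ref{eq:gcv}). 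Nothing in your plan produces this coupling, and without it the multi-constraint bookkeeping you worry about never even gets started.

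The second gap is the orientation in which you invoke Theorem~\ref{thm:m}. You propose to apply it with $f$ as the objective and an aggregated affine equality as the constraint---but that is precisely the configuration $B=0$, $b\neq0$ in which the S-lemma with equality can fail, so the theorem does not simply ``apply''; you would have to confront the exceptional case (\ref{m}) directly, and you supply no bridge from its occurrence per aggregated direction to convexity of $S$. The paper applies the S-lemma the other way around: in (\ref{eq:gcv}) the objective is $\varphi(\lambda,y)=\lambda^2-\lambda$, whose quadratic-form matrix ${\rm Diag}(1,0,\ldots,0)$ has no negative eigenvalue, so the exceptional case of Theorem~\ref{thm:m} can never occur, while the constraint is the genuinely quadratic $g$. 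Even then one must verify Assumption~\ref{as1} for $g$; the paper does so by a separate contradiction argument (if $g$ were one-signed, then $q=0$, $\delta=0$, and (\ref{eq:gcv}) would be trivially solvable), a step absent from your outline. Only afterward does the certificate (\ref{eq:cvge}), unpacked via the $2\times 2$ principal minors containing its zero diagonal entry, force $\mu\delta=-1$, $V^Ta\in\mathcal{R}(V^TA)$, $x_u-x_v\in\mathcal{N}(V^TA)$ and $-\delta^{-1}V^TAV\succeq0$, which is exactly where $W$ enters and cases (a) and (b) fall out; note these cases allow $A$ to have several negative eigenvalues, so your anticipated translation of ``exactly one negative eigenvalue'' is not the mechanism either. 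Your final conditions match the theorem, but the route you chart toward them stalls both at the reduction and at the S-lemma step.
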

\begin{proof}
Let $u=(u_f,u_{h_{1}},\ldots,u_{h_{p}})$ and
$v=(v_f,v_{h_{1}},\ldots,v_{h_{p}})$ be any two distinct points in
$S$. That is, there exists $x_u, x_v\in\Bbb R^n$ and $x_u\neq x_v$
such that
\[
u_f=f(x_u),~u_{h_{i}}=h_{i}(x_{u});~v_f=f(x_v),~v_{h_{i}}=h_{i}(x_v),~i=1,\ldots,p.
\]
Then, $S$ is nonconvex if and only if we cannot find
$x_\lambda\in\Bbb R^n$ such that
$$(f(x_\lambda),h_{1}(x_\lambda),\ldots,h_{p}(x_\lambda))=(1-\lambda)u+\lambda
v,~\lambda\in(0,1).$$ Equivalently, the following system in terms of
$(x_{\lambda}, \lambda)$ has no solution
\begin{eqnarray}
&& f(x_{\lambda}) =(1-\lambda)u_f+\lambda v_f,
\label{eq:fa}\\
&& h_{i}(x_{\lambda}) =(1-\lambda)u_{h_{i}}+\lambda v_{h_{i}},~i=1,\ldots,p,
\label{eq:ha}\\
&&\lambda\in (0, 1).\label{eq:lbd}
\end{eqnarray}
Since $h_1,\ldots,h_p$ are affine, the equations (\ref{eq:ha}) imply that
\begin{eqnarray*}
h_{i}(x_{\lambda})&=& 2b_{i}^T x_{\lambda}+d_{i} \\
&=& (1-\lambda)(2b_{i}^Tx_u+d_{i})+\lambda(2b_{i}^Tx_v+d_{i})\\
&=&2b_{i}^T((1-\lambda)x_u+\lambda x_v)+d_{i},~i=1,2,\ldots,p
\end{eqnarray*}
and thus $x_{\lambda}$ has to lie on the affine space:
\[
x_{\lambda} = (1-\lambda)x_u+\lambda x_v +Vy,~{\rm for ~some}~y\in\Bbb R^{n-r},
\]
where $V\in \Bbb R^{n\times (n-r)}$ is the matrix basis of
$\mathcal{N}\left([b_{1},b_{2},\ldots,b_{p}]^T\right)$.
Substituting $x_{\lambda}$ into (\ref{eq:fa}), we obtain
\begin{eqnarray}
f(x_{\lambda})&=&f\left((1-\lambda)x_u+\lambda x_v +Vy\right)\nonumber\\
&=& (1-\lambda)^2x_u^TAx_u+\lambda^2x_v^TAx_v+2\lambda(1-\lambda) x_u^TAx_v+
 2a^T\left((1-\lambda)x_u+\lambda x_v \right)\nonumber\\ &~~&+y^TV^TAVy+2
 \left((1-\lambda)x_u+\lambda x_v \right)^TAVy+2a^T Vy+c.\label{f-lambda}
\end{eqnarray}
Equate (\ref{f-lambda}) with
$$(1-\lambda)f(x_u)+\lambda f(x_v)=(1-\lambda)(x_u^TAx_u)+\lambda x_v^TAx_v
+2a^T\left((1-\lambda)x_u+\lambda x_v \right)+c$$ to yield
\begin{eqnarray*}
&&y^TV^TAVy +2\left((1-\lambda)x_u+\lambda x_v \right)^TAVy+2a^TVy\\
&=&\lambda(1-\lambda)(x_u^TAx_u+ x_v^TAx_v-2x_u^TAx_v).
\end{eqnarray*}
This is a quadratic equation in variables $(\lambda,y)\in\Bbb
R^{1+n-r}.$ To simplify, let
$\delta=x_u^TAx_u+x_v^TAx_v-2x_u^TAx_v=(x_u-x_v)^TA(x_u-x_v)$ and
\begin{equation}
G=\left[\begin{array}{cc}  \delta& (x_v-x_u)^TAV \\
 V^TA(x_v-x_u)&  V^TAV
\end{array}\right],~q=\left[\begin{array}{c}  -\delta\\ 2V^T(Ax_u+a)
\end{array}\right]\label{G-p}
\end{equation}
to obtain
\begin{eqnarray*} g(\lambda,y) &=&\delta
\lambda^2-\left(2(x_u-x_v)^TAVy+\delta\right)
\lambda+y^TV^TAVy +2x_u^TAVy+2a^TVy\\
&:=&\left[\begin{array}{c}  \lambda\\ y
\end{array}\right]^TG \left[\begin{array}{c}  \lambda\\ y
\end{array}\right]+q^T\left[\begin{array}{c}  \lambda\\ y
\end{array}\right].
\end{eqnarray*}

With all the settings, the joint numerical range $S$ is not convex
if and only if the system (\ref{eq:fa})-(\ref{eq:lbd}) is
unsolvable, which is equivalent to the system
\begin{equation}
 \lambda^2-\lambda<0,~g(\lambda,y)=0  \label{eq:gcv}
\end{equation}
having no solution.

We can show that, when (\ref{eq:gcv}) is unsolvable, $g(\lambda,y)$ must take both
positive and negative values. Should such a claim fail, we would have
\begin{equation}
 G\succeq (\preceq)0,~
 q\in \mathcal{R}\left(G\right),
 q^TG^+q=0. \label{p00}
\end{equation}
Since $G\succeq (\preceq)0$ implies that $G^+\succeq (\preceq)0$,
$q^TG^+q=0$ implies that $G^+q=0$. Therefore,
\[
q\in \mathcal{N}(G^+)= \mathcal{N}(G).
\]
However, from (\ref{p00}), we also known $q\in \mathcal{R}(G)$ and
thus $q=0$. By (\ref{G-p}),
\[
\delta=0,~  V^T(Ax_u+a)=0.
\]
Since $G\succeq (\preceq)0$, all $2\times 2$ principal minors
consisting of $\delta=0$ must have non-negative determinants. Then,
the first row (column) of $G$ should be identically 0. Namely,
$V^TA(x_v-x_u)=0$. The function $g$ is thus reduced to
$g(\lambda,y)=y^TV^TAVy$ so that (\ref{eq:gcv}) would be trivially
solvable. It is a contradiction.

Since Assumption 1 holds and since the quadratic term of the
function $\varphi(\lambda,y)=\lambda^2-\lambda$ does not have
exactly one negative eigenvalue, by the S-lemma with equality, the
statement (\ref{eq:gcv}) has no solution if and only if there is a
$\mu$ such that
\[
 \lambda^2-\lambda +\mu g(\lambda,y)\ge 0,~\forall \lambda\in\Bbb R,~y\in\Bbb R^{n-r},
\]
which has the following positive semi-definite formulation:
\begin{equation}
\left[\begin{array}{ccc} 1+\mu \delta& -\mu(x_u-x_v)^TAV& -\frac{1}{2}-\frac{1}{2}\mu\delta\\
-\mu V^TA(x_u-x_v)& \mu V^TAV& \mu V^T(Ax_u+a)\\
-\frac{1}{2}-\frac{1}{2}\mu\delta & \mu (x_u^TA+a^T)V& 0
\end{array}
\right]  \succeq 0. \label{eq:cvge}
\end{equation}
By considering all $2\times 2$ principal minors including the $0$
element in (\ref{eq:cvge}), we obtain
\begin{equation*}
-\frac{1}{2}-\frac{1}{2}\mu\delta = 0,~\mu V^T(Ax_u+a)=0,~\mu V^TA(x_u-x_v)=0,~\mu  V^TAV\succeq  0.
\end{equation*}
Therefore, $\mu\neq 0,~\delta\not=0$ and
\begin{eqnarray}
&&V^Ta\in \mathcal{R}(V^TA),\label{cveq:3}\\
&& x_u-x_v \in \mathcal{N}(V^TA),\label{cveq:1}\\
&& -\frac{1}{\delta}V^TAV\succeq  0.\label{cveq:2}
\end{eqnarray}
Let $W\in \Bbb R^{n\times (n-{\rm rank}(V^TA))}$ be the  matrix
basis of $\mathcal{N}(V^TA)$ and express (\ref{cveq:1}) as
\[
x_u-x_v = Wz,~{\rm for~some}~z\in \Bbb R^{n-{\rm rank}(V^TA)}.
\]
Then, (\ref{cveq:2}) implies that
\[
\delta=(x_u-x_v)^TA(x_u-x_v)=z^TW^TAWz\left\{\begin{array}{cc}
<0,&{\rm if}~V^TAV\succeq  0, \\
>0,&{\rm if}~V^TAV\preceq  0, \\
\end{array}\right.
\]
which, together with (\ref{cveq:3}), completes the proof.
\end{proof}

By Theorem \ref{thm:cv}, we can now confirm that the joint numerical
range in Example \ref{exam:cv} is non-convex.
%
%
\begin{exam}
Consider the nonconvex Example \ref{exam:cv}  where  $n=2,~p=1$:
\[
A=\left[\begin{array}{cc}2&0\\0&-1\end{array}\right],~
a=\left[\begin{array}{c}0\\0\end{array}\right],~
b=\left[\begin{array}{c}\frac{1}{2}\\\frac{1}{2}\end{array}\right].
\]
Then we can compute
\[
V=\left[\begin{array}{c}\frac{\sqrt{2}}{2}\\-\frac{\sqrt{2}}{2}\end{array}\right],~
V^TA=\left[\begin{array}{c}\sqrt{2}~\frac{\sqrt{2}}{2}\end{array}\right],~
W=\left[\begin{array}{c}\frac{\sqrt{5}}{5}\\-\frac{2\sqrt{5}}{5}\end{array}\right]
\]
and, by Case (a) in Theorem \ref{thm:cv},
$M=\{(2x_1^2-x_2^2,x_1+x_2): (x_1,x_2)\in\Bbb R^2\}$ is
nonconvex since
\[
V^TAV=\frac{1}{2}\succeq 0,~ V^Ta=0\in \mathcal{R}(V^TA),~
W^TAW= -\frac{2}{5}.
\]
\end{exam}

Next, we show that Theorem \ref{thm:cv} implies
Theorem \ref{thm:beck} under $A\succ 0$.
\begin{itemize}
\item Suppose $p\le n-1$. We have
\begin{equation}
r={\rm rank}\left([b_{1}, b_{2}, \ldots, b_{p}]^T\right)\le n-1. \label{ibeck}
\end{equation}
Then, ${\rm rank}(V)=n-r\ge 1.$ Since $V$ is a basis matrix
consisting of at least one column and $A\succ 0$, we have
$V^TAV\succ 0$ and $W^TAW\succeq 0$. It follows that neither
Case (a) nor Case (b) in Theorem \ref{thm:cv} holds. Therefore,
the set $S$ defined in (\ref{Mset}) is convex.  We remark that
the condition $p\le n-1$ in Theorem \ref{thm:beck} can be
improved to (\ref{ibeck}).
%

\item Suppose $p=n$ and $b_{1},\ldots, b_n$ are
linearly independent. We have $r={\rm rank}\left([b_{1}, b_{2},
\ldots, b_{p}]^T\right)=n$, and ${\rm rank}(V)=0,~{\rm
rank}(W)=n.$ Then,
\[
V^TAV=0, ~V^Ta\in \mathcal{R}(V^TA),~W^TAW\succ 0.
\]
By Case (b) in Theorem \ref{thm:cv}, the set $S$ defined in
(\ref{Mset}) is nonconvex.
\end{itemize}

Finally in this section, we mention a weaker result than the
convexity of $\{(f(x),h_{1}(x),\ldots,h_{p}(x)):x\in\Bbb R^n\}$,
which asks for the convexity of
\begin{equation}
\{(f(x),h_{1}(x),\ldots,h_{p}(x)):x\in\Bbb R^n\} + \Bbb R_+^{p+1},\label{Sp}
\end{equation}
where $f,h_1,\ldots,h_p$ are nonhomogeneous quadratic functions and
$\Bbb R_+^{p+1}$ is the nonnegative orthant of $\Bbb R^{p+1}$. For
the special case $p=1$, Tuy and  Tuan showed that
$\{(f(x),h_{1}(x)):x\in\Bbb R^n\}+ \Bbb R_+^{2}$ is always convex,
see Corollary 10 of \cite{TT}. For a general $p$,  suppose
$f(x),h_1(x),\ldots,h_p(x)$ are homogeneous quadratic functions and
$\nabla^2f(x),\nabla^2h_1(x),\ldots,\nabla^2h_p(x)$ are all
Z-matrices (A real symmetric matrix $A$ is called a Z-matrix if
$A_{ij}\le 0$ for all $i\neq j$), Jeyakumar et al. \cite{J09b}
showed that
\[
\{(f(x),h_{1}(x),\ldots,h_{p}(x)):x\in\Bbb R^n\} + {\rm int}\Bbb R_+^{p+1}
\]
is convex, where ${\rm int}\Bbb R_+^{p+1}$ is the interior of $\Bbb R_+^{p+1}$. More recently, suppose $h_1(x)$ is a strictly convex quadratic function, $h_2(x),\ldots,h_p(x)$ are affine linear functions, for any nonhomogeneous quadratic function $f(x)$, Jeyakumar and Li \cite{J13}
showed that the set $(\ref{Sp})$ is convex under the dimension assumption:
\[
{\rm dim} \mathcal{N}\left(\nabla^2f(x)-\lambda_{\min}(\nabla^2f(x))I_n\right)
\ge {\rm dim}  {\rm~span}\{\nabla h_2(x),\ldots,\nabla h_p(x)\}+1,
\]
where ${\rm dim} L$ denotes the dimension of the subspace $L$ and $\lambda_{\min}(A)$ is the minimal eigenvalue of the matrix $A$.

Since the convexity of $S$ guarantees the convexity of $S+\Bbb
R_+^{p+1}$, Theorem \ref{thm:cv} trivially implies a new sufficient
condition for the set $(\ref{Sp})$.
\begin{cor}
Let $f(x)=x^TAx+2a^Tx+c$ and $h_{i}(x)=2b_{i}^Tx+d_{i}$ for
$i=1,\ldots,p$. Suppose neither Case (a) nor Case (b) in Theorem
\ref{thm:cv} occurs, the set
$\{(f(x),h_{1}(x),\ldots,h_{p}(x)):x\in\Bbb R^n\} + \Bbb R_+^{p+1}$
is convex.
\end{cor}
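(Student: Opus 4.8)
The plan is to obtain this statement as an immediate consequence of Theorem \ref{thm:cv} together with an elementary property of Minkowski sums, so essentially no new work is required. Writing $S=\{(f(x),h_{1}(x),\ldots,h_{p}(x)):x\in\Bbb R^n\}$, the first step is simply to apply Theorem \ref{thm:cv} verbatim: since by hypothesis neither Case (a) nor Case (b) occurs, that theorem guarantees the convexity of $S$. All of the genuine content of the corollary resides in this step, and it has already been established.

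The second step is to invoke the standard fact that the Minkowski sum of two convex subsets of a vector space is again convex, applied to $S$ and the nonnegative orthant. Concretely, given two points $u+w$ and $u'+w'$ of $S+\Bbb R_+^{p+1}$ with $u,u'\in S$ and $w,w'\in\Bbb R_+^{p+1}$, and any $\lambda\in[0,1]$, the convex combination rearranges as $[\lambda u+(1-\lambda)u']+[\lambda w+(1-\lambda)w']$. The first bracket lies in $S$ because $S$ is convex, and the second lies in $\Bbb R_+^{p+1}$ because the nonnegative orthant is itself convex (indeed a convex cone). Hence the combination belongs to $S+\Bbb R_+^{p+1}$, which establishes convexity of the sum.

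There is essentially no obstacle to overcome here. The only nontrivial ingredient is the convexity of $S$ supplied by Theorem \ref{thm:cv}, and the passage to $S+\Bbb R_+^{p+1}$ uses nothing beyond the stability of convexity under addition of a convex set. This is exactly the reduction already recorded in the sentence preceding the corollary, namely that convexity of $S$ automatically yields convexity of $S+\Bbb R_+^{p+1}$; the proof merely makes that remark explicit.
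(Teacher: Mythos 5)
Your proposal is correct and matches the paper exactly: the paper gives no separate proof, stating only that ``the convexity of $S$ guarantees the convexity of $S+\Bbb R_+^{p+1}$, Theorem \ref{thm:cv} trivially implies'' the corollary, which is precisely your two-step reduction. Your explicit verification that the Minkowski sum of the convex set $S$ with the convex cone $\Bbb R_+^{p+1}$ is convex just spells out the standard fact the paper leaves implicit.
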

Moreover, when $p=1$ and $h_1(x)$ is an affine function, with the
help of Theorem \ref{thm:cv}, we can see  how
$\{(f(x),h_{1}(x)):x\in\Bbb R^n\} + \Bbb R_+^{2}$ becomes convex in
the case that $\{(f(x),h_{1}(x)):x\in\Bbb R^n\}$ is nonconvex.
\begin{cor}
Let $f(x)=x^TAx+2a^Tx+c$ and $h_{1}(x)=2b_{1}^Tx+d_{1}$. Suppose
$S:=\{(f(x),h_{1}(x)):x\in\Bbb R^n\}$ is nonconvex. Then, exactly
one of the following cases happens:
\begin{itemize}
\item[${\rm(i)}$] The set $S$ contains a parametric curve
$C(t)=\{(u(t),v(t)):t\in\Bbb R\}$ satisfying
$\lim\limits_{t\rightarrow \infty}u(t)=\lim\limits_{t\rightarrow
\infty}v(t)=-\infty$. It follows immediately that $S + \Bbb
R_+^{2}= \Bbb R^2.$
\item[${\rm(ii)}$] $A=\alpha b_{1}b_{1}^T$ for some $\alpha>0$. Consequently,
$S+ \Bbb R_+^{2}$ is convex.
\end{itemize}
\end{cor}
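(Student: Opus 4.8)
The plan is to read the dichotomy straight off the convexity criterion in Theorem~\ref{thm:cv}, specialized to $p=1$. First I would dispose of the degenerate subcase $b_1=0$: then $h_1\equiv d_1$ and $S=\{(f(x),d_1):x\in\Bbb R^n\}$ is the range of the scalar quadratic $f$ (a connected subset of $\Bbb R$, hence an interval) crossed with a point, so $S$ is convex. Thus the hypothesis that $S$ is nonconvex forces $b_1\neq0$, whence $r=\mathrm{rank}(b_1^T)=1$ and $V\in\Bbb R^{n\times(n-1)}$ is a basis of the hyperplane $\mathcal N(b_1)=b_1^\perp$. By Theorem~\ref{thm:cv}, nonconvexity means exactly one of its Case~(a) or Case~(b) holds, and my job is to match these to the alternatives (i),(ii) via the split ``$V^TAV=0$ or not''.

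Next I would show that Case~(a), and Case~(b) with $V^TAV\neq0$, each produce a straight ray realizing (i). In Case~(a) the negative eigenvalue of $W^TAW$ gives a $w\in\mathcal N(V^TA)$ with $w^TAw<0$; if $b_1^Tw=0$ then $w=Vy$ and $w^TAw=y^TV^TAVy\ge0$ (since $V^TAV\succeq0$), a contradiction, so $b_1^Tw\neq0$, and after replacing $w$ by $-w$ if needed we may assume $b_1^Tw<0$. Then $x(t)=tw$ gives $h_1(x(t))=2t\,b_1^Tw+d_1\to-\infty$ and $f(x(t))=t^2 w^TAw+O(t)\to-\infty$, so $C(t)=(f(x(t)),h_1(x(t)))$ is the curve in (i). In Case~(b) with $V^TAV\preceq0$ nonzero, pick $v=Vy$ with $v^TAv=y^TV^TAVy<0$ (so $b_1^Tv=0$) and perturb to $w=v-\epsilon b_1$; for small $\epsilon>0$ one has $b_1^Tw=-\epsilon\|b_1\|^2<0$ and $w^TAw\to v^TAv<0$, giving the same kind of ray. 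Once such a curve into $(-\infty,-\infty)$ exists, $S+\Bbb R_+^2=\Bbb R^2$ is immediate: any $(s,t)$ dominates some $C(\tau)$ componentwise, so $(s,t)=C(\tau)+\big((s,t)-C(\tau)\big)\in S+\Bbb R_+^2$.

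The remaining, and main, case is Case~(b) with $V^TAV=0$, which I would show forces $A=\alpha b_1b_1^T$ with $\alpha>0$, i.e. (ii). Splitting $\Bbb R^n=\mathrm{span}(b_1)\oplus b_1^\perp$ and writing $A=\big[\begin{smallmatrix}p&q^T\\ q&R\end{smallmatrix}\big]$, the condition $V^TAV=0$ says $R=0$, and one checks that $V^TAx$ depends on $x$ only through its $b_1$-component, via $q$. If $q\neq0$ then $\mathcal N(V^TA)=b_1^\perp$, so $W$ spans $b_1^\perp$ and $W^TAW=R=0$ has no positive eigenvalue, contradicting Case~(b); hence $q=0$ and $A=\tfrac{p}{\|b_1\|^2}b_1b_1^T$, with $W=I$ and $W^TAW=A$ forcing $p>0$, i.e. $\alpha=p/\|b_1\|^2>0$. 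Moreover $V^TA=0$ gives $\mathcal R(V^TA)=\{0\}$, so the Case~(b) requirement $V^Ta\in\mathcal R(V^TA)$ yields $V^Ta=0$, i.e. $a\in\mathrm{span}(b_1)$. Then both $f$ and $h_1$ are functions of the single linear form $u=b_1^Tx$, and $S$ is the graph of a convex parabola in $u$; consequently $S+\Bbb R_+^2$ is the region lying above a convex nonincreasing function of $h_1$, hence convex. Since here $f$ is bounded below, no curve into $(-\infty,-\infty)$ exists, so (i) fails, which together with the previous paragraph shows exactly one of (i),(ii) occurs.

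I expect the linear-algebra bookkeeping of the last paragraph — extracting $A=\alpha b_1 b_1^T$ with $\alpha>0$ from $V^TAV=0$ combined with the sign condition on $W^TAW$, and pinning down $a\in\mathrm{span}(b_1)$ — to be the only delicate step; the ray constructions in Cases~(a),(b) and the conclusion $S+\Bbb R_+^2=\Bbb R^2$ are routine.
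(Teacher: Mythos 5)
Your proof is correct and follows essentially the same route as the paper: invoke Theorem~\ref{thm:cv}, construct curves in $S$ tending to $(-\infty,-\infty)$ in Case~(a) and in Case~(b) unless the degenerate situation occurs, and show that the degenerate situation forces $A=\alpha b_{1}b_{1}^T$ with $\alpha>0$. Your only departures are organizational and in fact slightly tighter: you split Case~(b) on $V^TAV=0$ versus $V^TAV\neq0$ and perturb along $-b_{1}$ (the paper splits on whether $A$ has a negative eigenvalue and perturbs along the positive eigenvector $z$), you dispose of $b_{1}=0$ explicitly, and you spell out two points the paper leaves implicit --- that Case~(b)'s condition $V^Ta\in\mathcal{R}(V^TA)=\{0\}$ forces $a\in\mathrm{span}(b_{1})$, whence $f$ is bounded below in case~(ii) and the alternatives (i), (ii) are mutually exclusive.
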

\begin{proof}
Since $S$ is nonconvex, by Theorem \ref{thm:cv}, either Case (a) or
Case (b) occurs.

Suppose Case (a) in Theorem \ref{thm:cv} happens. Then $A$ has a
negative eigenvalue. Let $z\neq 0$ be the corresponding eigenvector,
i.e., $z^TAz<0$. Then, $b_1^Tz\neq 0$. Otherwise, $b_1^Tz=0$ would
imply that $z\in \mathcal{R}(V)$. It would follow from $V^TAV\succeq
0$ that $z^TAz\ge 0$, which is a contradiction. Without loss of
generality, we assume that $b_1^Tz<0$ and define the parametric
curve
$$C(t)=\{(u(t),v(t)):~u(t)=f(tz),~v(t)=h_1(tz),~t\in\Bbb R\}.$$
Then, $C(t)\subset S$ and
\begin{eqnarray}
&&\lim_{t\rightarrow\infty} u(t)=\lim_{t\rightarrow\infty} f(tz)=\lim_{t\rightarrow\infty}
t^2z^TAz+2ta^Tz+c = -\infty; \label{wk1}\\
&&\lim_{t\rightarrow\infty} v(t)=\lim_{t\rightarrow \infty} h_1(tz)=
\lim_{t\rightarrow\infty} 2t(b_1^Tz)+d_{1}= -\infty\label{wk2},
\end{eqnarray}
which shows that (i) holds.

Suppose Case (b) in Theorem \ref{thm:cv} happens. We know $A$ must
have a positive eigenvalue with $z\neq 0$ being the corresponding
eigenvector. Since $V^TAV\preceq0$ and $z^TAz>0$, there must be
$b_1^Tz\neq 0$ and we again assume that $b_1^Tz<0$. In addition, if
$A$ also has a negative eigenvalue except for the positive one(s),
by the same argument as in (\ref{wk1})-(\ref{wk2}), we conclude
immediately that (i) holds.

Otherwise, when $A\succeq0$ but $V^TAV\precneqq 0$, there exists a
vector $u$ such that $u^TV^TAVu<0$. By continuity, we can find a
sufficiently small $\beta>0$ such that $(Vu+\beta z)^TA(Vu+\beta
z)<0$. By defining the curve
$$C(t)=\{(u(t),v(t)):~u(t)=f(t(Vu+\beta z)),~v(t)=h_1(t(Vu+\beta z)),~t\in\Bbb R\},$$
we also see that (i) holds since
\begin{eqnarray*}
&&\lim_{t\rightarrow\infty} u(t) \\
=&&\lim_{t\rightarrow\infty}
t^2(Vu+\beta z)^TA(Vu+\beta z)+2t a^T(Vu+\beta z)+c \\
=&& -\infty,
\end{eqnarray*}
and
\begin{eqnarray*}
\lim_{t\rightarrow\infty} v(t)=\lim_{t\rightarrow\infty} h_1(t(Vu+\beta z))=
\lim_{t\rightarrow\infty} 2t \beta(b_1^Tz)+d_{1}= -\infty.
\end{eqnarray*}

Finally, it remains to show that $A\succeq 0$ and $V^TAV=0$ lead to
(ii). When this happens, we have $AV=0$. Since $V$ is the matrix
basis for the null space of the $1\times n$ matrix $[b_1^T]$, $A$
must be of rank one with the form $A=\alpha b_1b_1^T$ for some
$\alpha>0$. That is, (ii) holds. It follows from the convexity of
$f(x)$ and $h_1(x)$ that $S+\Bbb R_+^{2}$ is convex.
\end{proof}

\section{Concluding remarks}
This paper is devoted to a completely new understanding toward the
S-lemma with equality. While the inequality version (${\rm
S_1}$)$\sim$(${\rm S_2}$) has been established under Slater's
condition, it was not immediately clear whether the equality version
(${\rm E_1}$)$\sim$(${\rm E_2}$) could be a real obstacle. While the
inequality version (${\rm S_1}$)$\sim$(${\rm S_2}$) has been widely
used in many applications, the important consequence of (${\rm
E_1}$)$\sim$(${\rm E_2}$), perhaps due to lack of an affirmative
result, was not yet visualized before. As (${\rm E_1}$)$\sim$(${\rm
E_2}$) is not true in general, we really need to get down to the
most subtle detail looking for a successful argument, many from
geometrical observations on quadratic manifolds. This complicated
essence as well as the poor accessibility to intuition make it hard
to come out with an easy-to-grasp intuitive proof. The long-standing
interval bounded generalized trust region subproblem (GTRS) has now
been resolved thoroughly by the full characterization of the
quadratic programming with a single quadratic equality constraint
(QP1EQC).
The relation between the S-lemma and the convexity of joint
numerical ranges is now further strengthened, indicating a step
forward to the duality theory for nonconvex optimization. We wish
that the study can sparkle new idea for
solving 
the
nonconvex quadratic optimization problem with multiple constraints and polynomial optimization problems.

\section*{Appendix}

We discuss the relations among S-Conditions 1,2,3 and 4. First, it
is easy to see that the definiteness of $B$ implies $A \succeq\eta
B$ for some $\eta.$ Therefore,
\[
{\rm S\text{-}Condition\ 1} ~\Longrightarrow~{\rm S\text{-}Condition\ 2}.
\]
Moreover, when $B$ is definite, $x^TBx=0$ if and only if $x=0$ and
thus
\[
{\rm S\text{-}Condition\ 1} ~\Longrightarrow~{\rm S\text{-}Condition\ 4}.
\]
When $h(x)$ is homogeneous, there is $b=d=0$ so that $h(0)=0.$ By
choosing $\zeta=0$,
\[
{\rm S\text{-}Condition\ 3} ~\Longrightarrow~{\rm S\text{-}Condition\ 4}
\]

It is not difficult to verify that neither S-Condition 2 nor
S-Condition 4 can imply each other \cite{Bong}. Consequently,
neither S-Condition 2 nor S-Condition 4 is necessary for the S-lemma
with equality.


We now show that the statement (\ref{as55}) in S-Condition 4 can be
equivalently simplified as
\begin{equation}
\left\{\begin{array}{ll}b\in \mathcal{R}(B),& \textnormal{if}~B\succeq 0~
\textnormal{or}~B\preceq 0,\\B\zeta+b=0,& \textnormal{otherwise}. \end{array}\right.
\label{bB}
\end{equation}
Notice that (\ref{bB}) trivially implies (\ref{as55}), so it is
sufficient to prove the converse.

Suppose $B\succeq 0$ or $B\preceq0$. Then, $x^TBx=0
~\Longleftrightarrow~ Bx=0$ and (\ref{as55}) can be recast as
\[
Bx=0 ~\Longrightarrow ~ b^Tx=0,~\forall x\in \mathbb{R}^n,
\]
which shows that $b\in \mathcal{R}(B)$.

Now assume that $B$ is indefinite. We first rewrite (\ref{as55}) by
\[
x^TBx=0 ~\Longrightarrow ~x^T(B\zeta+b)(B\zeta+b)^Tx=0,~\forall x\in \mathbb{R}^n.
\]
Since $(B\zeta+b)(B\zeta+b)^T\succeq 0$, it is further equivalent to
\[
x^TBx=0 ~\Longrightarrow ~ -x^T(B\zeta+b)(B\zeta+b)^Tx\ge 0,~\forall x\in \mathbb{R}^n.
\]
Since $B$ is indefinite, $h(x)=x^TBx$ takes both positive and
negative values. By the S-lemma with equality for homogeneous
quadratic forms, 
\begin{equation}
(\exists \mu\in\Bbb R)~~~-(B\zeta+b)(B\zeta+b)^T+\mu B  \succeq 0.\label{BbB}
\end{equation}
Since $\mu B \succeq (B\zeta+b)(B\zeta+b)^T \succeq 0$ and $B$ is
indefinite, it must be $\mu=0$ and thus $(B\zeta+b)(B\zeta+b)^T=0$.
It implies that $B\zeta+b=0$.


\section*{Acknowledgments}
The authors are grateful to the two anonymous referees for their valuable comments.

\end{document}